\newtheorem{thm}{Theorem}[section]
\newtheorem{cor}[thm]{Corollary}
\newtheorem{lem}[thm]{Lemma}
\newtheorem{prop}[thm]{Proposition}
\theoremstyle{definition}
\newtheorem{defn}{Definition}[section]
\theoremstyle{remark}
\newtheorem{rem}{Remark}[section]
\numberwithin{equation}{section}
\begin{document}
\title[Optimal conditions for elliptic systems]
      {Optimal conditions for $L^\infty$-regularity and a priori estimates for elliptic systems,
       II: $n(\geq 3)$ components}%

\author[Li Yuxiang]{Li Yuxiang }%
\address{Department of Mathematics, Southeast University, Nanjing 210096, P. R. China\\
         and\\
         Laboratoire Analyse, G\'{e}om\'{e}trie et Applications, Institut Galil\'{e}e,
         Universit\'{e} Paris-Nord 93430 Villetaneuse, France}
\email{lieyx@seu.edu.cn}
\thanks{Supported in part by National Natural Science Foundation of China 10601012
        and Southeast University Award Program for Outstanding Young Teachers 2005.}

\subjclass[2000]{35J25, 35J55, 35J60, 35B45, 35B65.}%
\keywords{Elliptic systems, optimality, $L^\infty$-regularity, a priori estimates, existence.}%


\begin{abstract}
In this paper, we present a bootstrap procedure for general elliptic
systems with $n(\geq 3)$ components. Combining with the
$L^p$-$L^q$-estimates, it yields the optimal $L^\infty$-regularity
conditions for the three well-known types of weak solutions:
$H_0^1$-solutions, $L^1$-solutions and $L^1_\delta$-solutions.
Thanks to the linear theory in $L^p_\delta(\Omega)$, it also yields
the optimal conditions for a priori estimates for
$L^1_\delta$-solutions. Based on the a priori estimates, we improve
known existence theorems for some classes of elliptic systems.
\end{abstract}
\maketitle

\section{Introduction}

The aim of this paper is to present an alternate-bootstrap procedure
to obtain $L^\infty$-regularity and a priori estimates for solutions
of semilinear elliptic systems with $n(\geq 3)$ components. This
method enables us to obtain the optimal $L^\infty$-regularity
conditions for the three well-known types of weak solutions:
$H_0^1$-solutions, $L^1$-solutions and $L^1_\delta$-solutions of
elliptic systems (for their definitions, see Section 2). Combining
with the linear theory in $L^p_\delta$-spaces, our method also
enables us to obtain a priori estimates for $L^1_\delta$-solutions,
therefore to obtain new existence theorems for various classes of
elliptic systems.

Let $\Omega\subset \mathbb{R}^d$ be a smoothly bounded domain and
$\mathbf{f}=(f_1,f_2,\cdots,f_n):\Omega\times
\mathbb{R}^n\rightarrow\mathbb{R}^n$ be Carath\'{e}odory functions.
Denote $\mathbf{u}=(u_1,u_2,\cdots,u_n):\Omega\rightarrow
\mathbb{R}^n$.  Let us consider the Dirichlet system of the form
\begin{eqnarray}
   &-\Delta \mathbf{u}=\mathbf{f}(x,\mathbf{u}),\ \ &{\rm in}\ \Omega, \nonumber\\
       [-1.5ex]\label{sys:main}\\[-1.5ex]
   &\mathbf{u}=0,\ \ &{\rm on}\ \partial\Omega.\nonumber
\end{eqnarray}
A typical case is
\begin{eqnarray}
   &-\Delta u_i=\prod_{j=1}^n u_j^{p_{ij}},\ \ &{\rm in}\ \Omega, \nonumber\\
       [-1.5ex]&&\hskip 20mm \label{sys:secondary}(i=1,2,\cdots,n)\\[-1.5ex]
   &u_i=0,\ \ &{\rm on}\ \partial\Omega.\ \ \nonumber
\end{eqnarray}

The existence theory of system (\ref{sys:main}) was raised as an
important question in the survey paper \cite{Lions} by Lions. Since
then, many authors have contributed to this question, see for
instance \cite{C, CFMT, FY, Lou, M, PQS, QS, TV, SZ$_2$, Zou$_2$}
and the references therein. Since system (\ref{sys:main}) is
generally of nonvariational structure, the proof of existence by
fixed point theorems is essentially reduced to deriving a priori
estimates for all possible solutions. There are several methods for
the derivation of a priori estimates: (a) The method of
Rellich-Pohozaev identities and moving planes, see \cite{CFM, FLN};
(b) The scaling or blow-up methods, which proceeds by contradiction
with some known Liouville-type theorems, see \cite{BM, CFMT, FY, GS,
Lou, So, Zou} and references therein, for the related Liouville-type
results, see \cite{BM, BuM, CMM, F, FF, M, PQS, RZ, So, SZ, SZ$_2$}
and the references therein; (c) The method of Hardy-Sobolev
inequalities, see \cite{BT, CFM$_2$, C, CFS, GW}. For the detailed
comments of the above methods, we refer to \cite{QS}, see also a
survey paper \cite{S$_2$}.

Recently, Quittner \& Souplet \cite{QS} developed an
alternate-bootstrap procedure for deriving a priori estimates in the
scale of weighted Lebesgue spaces $L^p_\delta(\Omega)$ for system
(\ref{sys:main}) $(n=2)$ with
\begin{eqnarray}
   &&-h_1(x)\leq f_1\leq C_1(|u_2|^p+|u_1|^\gamma)+h_2(x),\nonumber\\
        [-1.5ex]&&\hskip 80mm u_1, u_2\in \mathbb{R},\ x\in\Omega,\label{ass:QS}\\[-1.5ex]
   &&-h_1(x)\leq f_2\leq C_1(|u_1|^q+|u_2|^\sigma)+h_2(x),\nonumber
\end{eqnarray}
where $p,q>0,\ pq>1,\ \gamma,\sigma\geq 1,\ C_1>0$, $h_1\in
L^1_\delta(\Omega)$, $h_2\in L^\theta$ with $\theta>d/2$.   They
obtained the optimal conditions for $L^\infty$-regularities and a
priori estimates for $L^1_\delta$-solutions. In \cite{L}, Li
developed another more powerful alternate-bootstrap procedure for
system (\ref{sys:main}) with
\begin{eqnarray}
   &&|f_1|\leq C_1(|u_1|^r|u_2|^p+|u_1|^\gamma)+h(x),\nonumber\\
        [-1.5ex]&&\hskip 70mm u_1, u_2\in \mathbb{R},\ x\in\Omega,\label{ass:Li}\\[-1.5ex]
   &&|f_2|\leq C_1(|u_1|^q|u_2|^s+|u_2|^\sigma)+h(x),\nonumber
\end{eqnarray}
where $r,s\geq 0,\ p,q>0,\ pq>(1-r)(1-s),\ \gamma,\sigma>0, C_1>0$
and the regularity of $h$ depends on the type of weak solutions
considered. Since the bootstrap procedure is only based on the
$L^m$-$L^k$-estimates in the linear theories of weak solutions, he
obtained the optimal $L^\infty$-regularity conditions for the three
well-known types of weak solutions: $H_0^1$-solutions,
$L^1$-solutions and $L^1_\delta$-solutions and the optimal condition
for a priori estimates for $L^1_\delta$-solutions of system
(\ref{sys:main}).

This paper is a continuation of \cite{L} and mainly concerned with
the $L^\infty$-regularities and a priori estimates of the weak
solutions of system (\ref{sys:main}) with $n(n\geq 3)$ components.
Since the bootstrap procedure in \cite{L} seems not to be
generalized to apply to general system (\ref{sys:main}) with $n(\geq
3)$ components, here we develop a new bootstrap procedure for system
(\ref{sys:main}) with
\begin{eqnarray}\label{ass:f i}
   |f_i|\leq C_1(\prod_{j=1}^n u_j^{p_{ij}}+u_i^{r_i})+h(x),\ (1\leq i\leq
   n) \ \ \ \mathbf{u}\in \mathbb{R}^n,\ x\in\Omega,
\end{eqnarray}
where $p_{ij}\geq 0,\ r_i\geq 0\ (1\leq i,j\leq n)$, $C_1>0$ and the
regularity of $h$ will be specified later. More importantly, our
bootstrap procedure is also only based on the $L^m$-$L^k$-estimates
in the linear theories of weak solutions, so we are also able to
obtain the optimal $L^\infty$-regularity conditions for the three
well-known types of weak solutions and the optimal condition for a
priori estimates for $L^1_\delta$-solutions of system
(\ref{sys:main}).

Comparing with the above methods, the advantages of the
alternate-bootstrap method is obvious. First it only requires an
upper bounds of $\mathbf{f}$; secondly it is only based on the
$L^m$-$L^k$-estimates in the linear theories of weak solutions, so
it can apply to any type of weak solutions which have
$L^m$-$L^k$-estimates; thirdly, it can yields the optimal conditions
for the $L^\infty$-regularity and a priori estimates.

Set $P=(p_{ij})$ be the matrix of exponents. Let $I$ be the unit
matrix. We assume that
\begin{eqnarray}\label{ass:main}
   &&p_{ij}\geq 0\ (1\leq i,j\leq n),\ I-P\ \mathrm{is\ an\ irreducible\ matrix},\ |I-P|<0,\nonumber\\
       [-1.5ex]\\[-1.5ex]
   &&{\rm each\ principal\ sub-matrix\ of\ rank}\leq n-1\ \mathrm{is\ a\ nonsingular}\
      M-\mathrm{matrix}.\nonumber
\end{eqnarray}
For the definition and some properties of $M$-matrices, see
\cite{BP}. According to the definition of $M$-matrix, all of the
principal minors of rank $\leq n-1$ of $I-P$ is nonnegative.
$M$-matrices have appeared in the blow-up of solutions of parabolic
systems with $n$ components, see \cite{LX, LLX, WW} and the
references therein.

\subsection{Optimal conditions for $L^\infty$-regularity}

Let $\alpha=(\alpha_1,\alpha_2,\cdots,\alpha_n)^T$ be the solution
of the linear system
\begin{eqnarray*}
    (I-P)\alpha=-\mathbf{1}
\end{eqnarray*}
where $\mathbf{1}=(1,1,\cdots,1)^T$. Under assumption
(\ref{ass:main}), $\alpha_i>0\ (1\leq i\leq n)$. For $n=2$, we have
\begin{equation*}
    \alpha_1=\displaystyle\frac{p_{12}+1-p_{22}}{p_{12}p_{21}-(1-p_{11})(1-p_{22})},\ \
    \alpha_2=\displaystyle\frac{p_{21}+1-p_{11}}{p_{12}p_{21}-(1-p_{11})(1-p_{22})},
\end{equation*}
which are related to its scaling properties of system
(\ref{sys:secondary}) (see for instance \cite{CFMT}) and appear for
instance in \cite{DE, W, Zh} in the study of blow-up for its the
parabolic counterpart.

For the $L^\infty$-regularity, we obtain the following theorems.

\begin{thm}\label{thm:H1 solution}{\rm (Optimal $L^\infty$-regularity for
$H_0^1$-solutions)}\\
Assume that $\mathbf{f}$ satisfy $(\ref{ass:f i})$ with
$(\ref{ass:main})$.
\begin{enumerate}
  \item[(i)] If
            \begin{eqnarray}\label{ass:optimal condition for H1}
                  \displaystyle\max_{i\in\{1,2,\cdots,n\}}\alpha_i>\frac{d-2}{4},\ \
                  \max_{i\in\{1,2,\cdots,n\}}r_i<\frac{d+2}{d-2},\ \
                  h\in L^\theta(\Omega),\
                  \theta>\frac{d}{2},
           \end{eqnarray}
           then any $H_0^1$-solution $\mathbf{u}$ of system $(\ref{sys:main})$ belongs to
           $L^\infty(\Omega)$;
  \item[(ii)]If $d\geq 3$ and
           \begin{eqnarray}\label{ass:optimal condition inverse for H1}
                 \displaystyle\max_{i\in\{1,2,\cdots,n\}}\alpha_i<\frac{d-2}{4},
           \end{eqnarray}
           system $(\ref{sys:main})$ in $B_1$, the unit ball in $\mathbb{R}^d$,
           with $f_i=\prod_{j=1}^n (u_j+c_j)^{p_{ij}}\ (1\leq i\leq n)$ for some $c_j\ (1\leq i\leq n)$ admits a
           positive $H_0^1$-solution $\mathbf{u}$ such that $u_i\notin
           L^\infty(B_1)\ (1\leq i\leq n)$.
\end{enumerate}
\end{thm}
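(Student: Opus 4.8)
The plan is to establish part (i) by the alternate-bootstrap procedure, and part (ii) by an explicit singular-solution construction.

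\medskip

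\noindent\textbf{Part (i): the bootstrap.}
An $H_0^1$-solution $\mathbf{u}$ satisfies, by Sobolev embedding, $u_i \in L^{2^*}(\Omega)$ with $2^* = 2d/(d-2)$ (for $d\ge 3$; $d=2$ being easier). The first step is to feed this integrability into the right-hand side $|f_i| \le C_1(\prod_j u_j^{p_{ij}} + u_i^{r_i}) + h$ and apply the $L^m$–$L^k$-estimate for the Laplacian with Dirichlet data: if $-\Delta v = g$ with $g \in L^m(\Omega)$, then $v \in L^k(\Omega)$ where $1/k = 1/m - 2/d$ (and $v\in L^\infty$ once $m > d/2$). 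The point is to track a vector of integrability exponents: suppose $u_j \in L^{q_j}$; then $\prod_j u_j^{p_{ij}} \in L^{m_i}$ with $1/m_i = \sum_j p_{ij}/q_j$ by Hölder, and the term $u_i^{r_i} \in L^{q_i/r_i}$, while $h \in L^\theta$. One improvement step replaces $q_i$ by the new exponent $\tilde q_i$ determined by $1/\tilde q_i = \max\{\sum_j p_{ij}/q_j,\ r_i/q_i,\ 1/\theta\} - 2/d$, provided the bracket exceeds $2/d$; otherwise we have already reached $L^\infty$. The heart of the argument is to show this iteration terminates, i.e. reaches a stage where every bracket is $\le 2/d$, precisely under the hypotheses $\max_i \alpha_i > (d-2)/4$, $\max_i r_i < (d+2)/(d-2)$, and $\theta > d/2$. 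Working with the variables $s_i = 1/q_i$, the map is $s_i \mapsto \max\{(Ps)_i,\ r_i s_i,\ 1/\theta\} - 2/d$ (capped below at $0$), a monotone piecewise-linear iteration; the condition on $r_i$ guarantees the diagonal terms $r_i s_i$ do not obstruct decay, and the $M$-matrix / $|I-P|<0$ structure in \eqref{ass:main}, together with $\alpha = (I-P)^{-1}\mathbf 1 > 0$ and $\max_i\alpha_i > (d-2)/4$, is what forces the coupled part $s \mapsto Ps$ to contract the relevant quantity below the threshold $2/d$. This is the step I expect to be the main obstacle: proving the iteration is well-defined (no exponent blows up prematurely) and that it strictly decreases a suitable functional — something like $\max_i (s_i/\alpha_i)$ or a weighted $\ell^\infty$ norm adapted to $\alpha$ — until $L^\infty$ is attained; one must also handle the subtlety that Hölder only gives $\prod u_j^{p_{ij}} \in L^{m_i}$ when $m_i \ge 1$, which near the end of the bootstrap requires a careful case analysis or an interpolation/\,$\eps$-loss argument.

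\medskip

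\noindent\textbf{Part (ii): the singular example.}
Here I would look for a radial solution on $B_1 \subset \mathbb R^d$ of the form $u_i(x) = a_i |x|^{-\alpha_i} - b_i$ (or more precisely $u_i + c_i = a_i|x|^{-\alpha_i}$ up to adjusting constants), choosing the $c_j$ and $a_j$ so that $f_i = \prod_j (u_j+c_j)^{p_{ij}} = \big(\prod_j a_j^{p_{ij}}\big) |x|^{-\sum_j \alpha_j p_{ij}} = \big(\prod_j a_j^{p_{ij}}\big)|x|^{-(\alpha_i - (P\alpha - \alpha)_i)\cdots}$. The exponent arithmetic is exactly the defining relation $(I-P)\alpha = \mathbf 1$, which makes the power of $|x|$ on the right-hand side equal to $-(\alpha_i+2)$; since $-\Delta(|x|^{-\alpha_i}) = \alpha_i(\alpha_i+2-d)|x|^{-\alpha_i-2}$, one matches the two sides by solving for the $a_i$ from the resulting multiplicative system (taking logarithms turns it into a linear system governed again by $I-P$, solvable since $|I-P|\ne 0$), and one checks $\alpha_i(\alpha_i+2-d)$ has the right sign — this is where $\alpha_i < (d-2)/4 < d-2$ enters. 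Then I verify that $u_i \in H_0^1(B_1)$: the gradient behaves like $|x|^{-\alpha_i - 1}$, whose square $|x|^{-2\alpha_i-2}$ is integrable near $0$ precisely when $2\alpha_i + 2 < d$, i.e. $\alpha_i < (d-2)/2$ — implied by $\alpha_i < (d-2)/4$; the boundary condition is arranged by subtracting the harmonic function with the right trace (or by the additive constants $b_i$, then correcting with a smooth perturbation that does not affect the singularity). Finally $u_i \notin L^\infty(B_1)$ is immediate from the $|x|^{-\alpha_i}$ singularity with $\alpha_i > 0$. The only delicate point is ensuring positivity of $\mathbf u$ near the origin (automatic, since $u_i \to +\infty$) and on the rest of $B_1$ (handled by the freedom in $c_j$ and the correction term), and checking the perturbation does not destroy the system identity — I would absorb the lower-order error into $h$ or note that near the boundary the equation can be met with a genuine smooth correction since there the nonlinearity is a bounded perturbation.
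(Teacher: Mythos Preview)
With your ansatz $u_i+c_i=a_i|x|^{-\alpha_i}$ the exponents do not match: $-\Delta(|x|^{-\alpha_i})\sim|x|^{-\alpha_i-2}$, while $\prod_j(u_j+c_j)^{p_{ij}}\sim|x|^{-(P\alpha)_i}=|x|^{-\alpha_i-1}$. The paper takes $u_i=c_i(|x|^{-2\alpha_i}-1)$; then both sides scale as $|x|^{-2\alpha_i-2}$ and the constants are fixed by $\prod_j c_j^{p_{ij}}=2c_i\alpha_i(d-2-2\alpha_i)$. This function vanishes exactly on $\partial B_1$, so no harmonic correction or absorption into $h$ is needed. With the corrected exponent the $H_0^1$ check reads $\int |x|^{-4\alpha_i-2}\,dx<\infty$, i.e.\ $\alpha_i<(d-2)/4$, which is \emph{precisely} the hypothesis (your computation gave $(d-2)/2$, the $L^1$ threshold, because of the missing factor).

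\textbf{Part (i): there is a real gap.} Your simultaneous iteration $s\mapsto Ps-\tfrac{2}{d}\mathbf 1$ (capped at $0$) does not converge under the stated hypothesis. The assumption $|I-P|<0$ together with the $M$-matrix structure on proper principal submatrices forces the Perron root of $P$ to exceed $1$, so the affine map is \emph{expansive}: its fixed point $s^*=\tfrac{2}{d}\alpha$ is unstable. Your candidate Lyapunov functional $\max_i(s_i/\alpha_i)$ fails for the same reason: starting from $s_i=(d-2)/(2d)$ one has $\max_i(s_i/\alpha_i)=(d-2)/(2d\min_i\alpha_i)$, which is $>2/d$ whenever $\min_i\alpha_i<(d-2)/4$; but only $\max_i\alpha_i>(d-2)/4$ is assumed, and then one step of the iteration can strictly worsen the integrability of the ``bad'' components. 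Concretely, for $n=2$ with $p_{11}=p_{22}=0$, $p_{12}=p<q=p_{21}$ and $\alpha_2<(d-2)/4<\alpha_1$, two rounds of your update give $s_2\mapsto pqs_2-(q+1)\tfrac{2}{d}>s_2$.

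The paper's resolution is not a better functional for the same map, but a different bootstrap architecture: one works in the scale $B^k=L^{2_*k}$ (so the gain is $1/p'_c=4/(d+2)$ in the index $k$), orders the equations so that $\sum_j p_{1j}$ is minimal (hence $<p_c$, an algebraic consequence of the hypotheses), and then bootstraps \emph{sequentially}---first iterating only the first equation to push $u_1$ to its ceiling $k^*$, then the first two equations jointly to a pair $(k_1^*,k_2^*)$, and so on. The engine is an algebraic monotonicity lemma: the ratios $-|Q_r|/\Lambda_r^r$ built from the $r\times r$ corners of $I-P$ are nondecreasing in $r$, with final value $1/\alpha_n$, so only the condition $1/\alpha_n<p_c-1$ (i.e.\ $\max_i\alpha_i>(d-2)/4$) is needed to close the last step. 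This is the missing idea; the difficulty you flagged as ``the main obstacle'' cannot be removed within the simultaneous-iteration framework.
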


\begin{thm}\label{thm:L1 solution}{\rm (Optimal $L^\infty$-regularity for
$L^1$-solutions)}\\
Assume that $\mathbf{f}$ satisfy $(\ref{ass:f i})$ with
$(\ref{ass:main})$.
\begin{enumerate}
  \item[(i)] If
            \begin{eqnarray}\label{ass:optimal condition for L1}
                  \displaystyle\max_{i\in\{1,2,\cdots,n\}}\alpha_i>\frac{d-2}{2},\ \
                  \max_{i\in\{1,2,\cdots,n\}}r_i<\frac{d}{d-2},\ \
                  h\in L^\theta(\Omega),\
                  \theta>\frac{d}{2},
           \end{eqnarray}
           then any $L^1$-solution $\mathbf{u}$ of system $(\ref{sys:main})$ belongs to
           $L^\infty(\Omega)$;
  \item[(ii)]If $d\geq 3$ and
           \begin{eqnarray}\label{ass:optimal condition inverse for L1}
                 \displaystyle\max_{i\in\{1,2,\cdots,n\}}\alpha_i<\frac{d-2}{2},
           \end{eqnarray}
           system $(\ref{sys:main})$ in $B_1$, the unit ball in $\mathbb{R}^d$,
           with $f_i=\prod_{j=1}^n (u_j+c_j)^{p_{ij}}\ (1\leq i\leq n)$ for some $c_j\ (1\leq i\leq n)$ admits a
           positive $L^1$-solution $\mathbf{u}$ such that $u_i\notin
           L^\infty(B_1)\ (1\leq i\leq n)$.
\end{enumerate}
\end{thm}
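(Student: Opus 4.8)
The plan is to prove Theorem~\ref{thm:L1 solution} by running the alternate-bootstrap procedure in the scale of Lebesgue spaces $L^k(\Omega)$, using the linear $L^m$--$L^k$-estimate appropriate to $L^1$-solutions: if $-\Delta v = g$ in $\Omega$, $v=0$ on $\partial\Omega$, with $g\in L^m(\Omega)$, then $v\in L^k(\Omega)$ whenever $\frac1k > \frac1m - \frac2d$, i.e. one gains $2/d$ in the scale of reciprocal exponents, and for $m>d/2$ one reaches $L^\infty$. So part (i) is a matter of showing that, starting from $\mathbf{u}\in L^1(\Omega)$ (indeed $u_i$ in every $L^k$ with $k<d/(d-2)$ by the linear theory applied once to $h\in L^\theta$ plus the a~priori $L^1$-information on the nonlinearity), one can iterate the map $\mathbf{u}\mapsto$ "apply $(-\Delta)^{-1}$ to the right-hand side bounded by $(\ref{ass:f i})$" and strictly improve the integrability exponents at each step until all $u_i\in L^\infty$.

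The key technical device is to track a vector of (reciprocal) integrability exponents. Suppose at some stage $u_j\in L^{k_j}(\Omega)$ for each $j$. From $(\ref{ass:f i})$ the term $\prod_j u_j^{p_{ij}}$ lies in $L^{m_i}$ with $\frac{1}{m_i}=\sum_j \frac{p_{ij}}{k_j}$ (Hölder), while the lower-order term $u_i^{r_i}$ lies in $L^{k_i/r_i}$, and $h\in L^\theta$ with $\theta>d/2$. Applying the linear estimate, the new exponent $k_i'$ for $u_i$ satisfies $\frac{1}{k_i'}$ slightly larger than $\max\bigl(\sum_j \frac{p_{ij}}{k_j}-\frac2d,\ \frac{r_i}{k_i}-\frac2d,\ 0\bigr)$ (the last because $h$ already gives $L^\infty$ contribution). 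Writing $t_j = 1/k_j$ this is the iteration $t_i' \approx \max_j$-type map $t' = (Pt - \tfrac2d\mathbf 1)_+$ modified by the lower-order terms. The crucial point is that the condition $\max_i\alpha_i > \frac{d-2}{2}$, where $(I-P)\alpha=-\mathbf 1$, guarantees that the only fixed point of the relevant linear part in the positive orthant — forcing $(I-P)t = -\tfrac2d\mathbf 1$, i.e. $t = \tfrac2d\alpha$ — has some component $t_i = \tfrac2d\alpha_i$ which would have to exceed $\tfrac{2}{d}\cdot\tfrac{d-2}{2}$... wait, one wants the iteration to \emph{escape} the positive orthant, so the correct reading is that $\max_i\alpha_i > \tfrac{d-2}{2}$ means the fixed point $\tfrac2d\alpha$ has a component $>\tfrac{d-2}{d}$, hence $\tfrac1{k_i}>\tfrac{d-2}{d}\cdot$ something — actually I would argue the contrapositive: if the bootstrap \emph{stalled} with all $u_i\notin L^\infty$, one could pass to the limit of the monotone exponent sequence and obtain a nonnegative solution $t\le\tfrac2d\alpha$ of $t\ge Pt-\tfrac2d\mathbf1$ componentwise with no component forced to $0$; the $M$-matrix hypothesis on principal submatrices together with irreducibility of $I-P$ then forces $t=\tfrac2d\alpha$, and $r_i<\tfrac{d}{d-2}$ ensures the lower-order terms never obstruct this, so the stall value is exactly $t=\tfrac2d\alpha$; but $\max_i\alpha_i>\tfrac{d-2}{2}$ says $\max_i t_i = \tfrac2d\max_i\alpha_i > \tfrac{d-2}{d}$, and after \emph{one more} application of the linear estimate to that coordinate $i$ one gains $2/d$ and crosses $1/k = 2/d$ i.e. $k<\infty$... more precisely, the point is that at the stall one would have for that $i$ the source $\prod_j u_j^{p_{ij}}\in L^{m_i}$ with $1/m_i = \sum_j p_{ij} t_j < t_i + 2/d \le 2/d + (d-2)/d\cdot$-ish — the upshot being $m_i>d/2$, so $u_i\in L^\infty$, a contradiction. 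I would organize this as: (a) set up the exponent vectors and the monotone increasing sequence they generate (monotonicity from $P\ge0$), (b) show the sequence either reaches "$m_i>d/2$ for some $i$" in finitely many steps or converges to a limit $t^\infty$ fixed by the map, (c) use (\ref{ass:main}) --- the $M$-matrix and irreducibility conditions --- to identify the limit as $\tfrac2d\alpha$ (this is where one invokes \cite{BP}: a $Z$-matrix whose proper principal submatrices are nonsingular $M$-matrices but which is itself singular, here $I-P$, has one-dimensional kernel spanned by a positive vector, namely $\alpha$), (d) derive the contradiction with $\max_i\alpha_i>\tfrac{d-2}{2}$, (e) then a finite cascade: once \emph{one} $u_i\in L^\infty$, feed it back, re-run the bootstrap on the remaining components whose governing matrix is now a principal submatrix of rank $\le n-1$, hence a nonsingular $M$-matrix, and standard (non-critical) bootstrap with a nonsingular $M$-matrix terminates in $L^\infty$ for all components. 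The role of $r_i<\tfrac{d}{d-2}$ is precisely to keep the diagonal lower-order feedback subcritical so it cannot create a spurious stall; $h\in L^\theta$, $\theta>d/2$, is harmless since it contributes an $L^\infty$ term throughout.

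I expect the main obstacle to be step (c): rigorously showing the stalled exponent vector must equal $\tfrac2d\alpha$ rather than some other nonnegative fixed point of the truncated map $t\mapsto (Pt-\tfrac2d\mathbf1)_+$. One has to rule out partial stalls where a subset $S\subsetneq\{1,\dots,n\}$ of components has already reached $L^\infty$ (reciprocal exponent $0$) while the complement sits at a positive fixed point of the corresponding principal submatrix block; but the $M$-matrix hypothesis makes $(I-P)_{S^c}$ invertible, and invertibility of a nonsingular $M$-matrix would force that block's fixed point to be $\le 0$, hence $0$, contradicting "not yet in $L^\infty$" --- so in fact \emph{any} partial stall propagates to a total escape, and the only genuine obstruction is the full fixed point $\tfrac2d\alpha$, which $(\ref{ass:optimal condition for L1})$ excludes. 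Part (ii) of the theorem is the sharpness construction and is handled separately exactly as in \cite{L}: on $B_1$ one writes down an explicit radial singular supersolution/solution of the form $u_i(x)=A_i|x|^{-\gamma_i}$-type with $\gamma_i$ chosen so that $(I-P)\gamma = $ (const)$\,\mathbf1$, i.e. $\gamma = \tfrac2{d-2}\cdot(\text{something})\,\alpha$ --- the constants $c_j$ are added to absorb lower-order inconsistencies near the boundary --- and one checks $u_i\in L^1$ but $u_i\notin L^\infty$ precisely when $\max_i\alpha_i<\tfrac{d-2}{2}$, so no blank line here in the display. The two parts together establish optimality of the threshold $\tfrac{d-2}{2}$ on $\max_i\alpha_i$.
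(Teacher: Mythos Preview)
Your overall strategy for part~(i)---run a simultaneous bootstrap on all components, assume it stalls with every $t^\infty_i>0$, identify the stall vector, and derive a contradiction---is genuinely different from the paper's. The paper reduces Theorem~\ref{thm:L1 solution}(i) to the abstract Theorem~\ref{thm:abstract} with $p_c=d/(d-2)$, and proves that theorem by a \emph{constructive sequential} bootstrap: one first exhausts what equation~1 alone can give (Lemma~\ref{lem:integrability of u_1}), then equations~1 and~2 together (Lemma~\ref{lem:second bootstrap}), and so on, the feasibility of each successive step being guaranteed by the chain of determinantal inequalities in Lemma~\ref{lem:bootstrap}. Your contradiction route is potentially shorter, but step~(c) as you wrote it contains a real error.

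The matrix $I-P$ is \emph{not} singular: hypothesis~(\ref{ass:main}) says $|I-P|<0$, so $\alpha$ is not a kernel vector and the Berman--Plemmons fact you invoke does not apply. Also, at a genuine stall one only gets the inequality $(I-P)t^\infty \le -\tfrac{2}{d}\mathbf{1}$, not the equality $t^\infty=\tfrac{2}{d}\alpha$; and several of your displayed inequalities (``$t\le\tfrac{2}{d}\alpha$'', ``$t\ge Pt-\tfrac{2}{d}\mathbf{1}$'') point the wrong way. The correct replacement for~(c) is this: under~(\ref{ass:main}) every proper principal minor of the irreducible $Z$-matrix $I-P$ is positive, which forces $\mathrm{adj}(I-P)>0$ entrywise (check cofactors directly, using irreducibility for the off-diagonal ones); combined with $|I-P|<0$ this gives $(I-P)^{-1}<0$ entrywise. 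Multiplying $(I-P)t^\infty\le -\tfrac{2}{d}\mathbf{1}$ by $(I-P)^{-1}$ therefore \emph{reverses} the inequality and yields $t^\infty\ge\tfrac{2}{d}\alpha$. But the monotone-decreasing iteration (take the better of the old exponent and the new one) starts from $t^0$ with each entry arbitrarily close to $\tfrac{d-2}{d}$, so $t^\infty\le\tfrac{d-2}{d}\mathbf{1}$, forcing $\alpha_i\le\tfrac{d-2}{2}$ for every~$i$---the desired contradiction. With this repair your partial-stall discussion becomes superfluous. For part~(ii) your sketch and the paper's proof coincide: the paper takes $u_i=c_i(|x|^{-2\alpha_i}-1)$ on $B_1$, with the $c_i$ fixed by $\prod_j c_j^{p_{ij}}=2c_i\alpha_i(d-2-2\alpha_i)$, and checks directly that $\alpha_i<\tfrac{d-2}{2}$ makes both $u_i$ and $f_i=\prod_j(u_j+c_j)^{p_{ij}}$ lie in $L^1(B_1)$.
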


\begin{thm}\label{thm:very weak solution}{\rm (Optimal $L^\infty$-regularity for
$L^1_\delta$-solutions)}\\
Assume that $\mathbf{f}$ satisfy $(\ref{ass:f i})$ with
$(\ref{ass:main})$.
\begin{enumerate}
  \item[(i)] If
            \begin{eqnarray}\label{ass:optimal condition for very weak}
                  \displaystyle\max_{i\in\{1,2,\cdots,n\}}\alpha_i>\frac{d-1}{2},\ \
                  \max_{i\in\{1,2,\cdots,n\}}r_i<\frac{d+1}{d-1},\ \
                  h\in L^\theta_\delta(\Omega),\
                  \theta>\frac{d+1}{2},
           \end{eqnarray}
           then any $H_0^1$-solution $\mathbf{u}$ of system $(\ref{sys:main})$ belongs to
           $L^\infty(\Omega)$;
  \item[(ii)]If $d\geq 2$ and
           \begin{eqnarray}\label{ass:optimal condition inverse for very weak}
                 \displaystyle\max_{i\in\{1,2,\cdots,n\}}\alpha_i<\frac{d-1}{2},
           \end{eqnarray}
            there exist functions $a_i\in L^\infty(\Omega)$, $a_i\geq 0\ (1\leq i\leq n)$ such
            that system $(\ref{sys:main})$ with $f_i=a_i\prod_{j=1}^n u_j^{p_{ij}}\ (1\leq i\leq n)$ admits
            a positive $L^1_\delta$-solution $\mathbf{u}$ such that $u_i\notin L^\infty(\Omega)\ (1\leq i\leq n)$.
\end{enumerate}
\end{thm}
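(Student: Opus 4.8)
The plan is to treat the two parts by quite different mechanisms. For part~(i), the strategy is an alternate-bootstrap argument carried out entirely in the weighted Lebesgue spaces $L^p_\delta(\Omega)$, using only the $L^p_\delta$--$L^q_\delta$ smoothing estimates for the Dirichlet Laplacian (this is the ``linear theory'' referred to in the introduction). Starting from $\mathbf{u}\in L^1_\delta(\Omega)$, one uses the pointwise bound $(\ref{ass:f i})$ to control each $|f_i|$ by a product $\prod_j u_j^{p_{ij}}$ plus a sub-critical power $u_i^{r_i}$ plus $h\in L^\theta_\delta$ with $\theta>(d+1)/2$. Writing the solution as $\mathbf{u}=(-\Delta)^{-1}\mathbf{f}$ and applying H\"older's inequality to the product term, one obtains at each stage an improved integrability exponent for each component; the crucial point is that the ``gain matrix'' governing how the vector of exponents $(s_1,\dots,s_n)$ of $(u_i)\in L^{s_i}_\delta$ evolves is controlled by $I-P$, and the hypotheses in $(\ref{ass:main})$ — $|I-P|<0$ together with each proper principal submatrix being a nonsingular $M$-matrix — are precisely what makes the iteration converge and what identifies $\alpha=(I-P)^{-1}\mathbf{1}$ as the relevant scaling vector. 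The condition $\max_i\alpha_i>(d-1)/2$ guarantees that the product term does not obstruct the bootstrap before one reaches integrability high enough to invoke the $L^\infty$ endpoint of the linear theory, while $\max_i r_i<(d+1)/(d-1)$ is exactly subcriticality for the diagonal self-interaction term in the $L^1_\delta$ setting, and $\theta>(d+1)/2$ handles the inhomogeneity. (I note that the statement of~(i) as written says ``any $H_0^1$-solution'' — from context and from the parallel with Theorems~\ref{thm:H1 solution} and~\ref{thm:L1 solution} this should read ``any $L^1_\delta$-solution''; I would prove the $L^1_\delta$ version.)

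For part~(ii), the plan is an explicit counterexample of the type already sketched for the earlier theorems: choose a positive singular radial profile and build coefficients $a_i\in L^\infty$ so that $\mathbf{u}=(u_1,\dots,u_n)$ with $u_i\sim |x|^{-\gamma_i}$ near the origin (or a boundary-adapted analogue) solves the system in the $L^1_\delta$ sense. The natural ansatz is $u_i = c_i\,\delta(x)^{-\beta_i}$ or a power of the distance to a boundary point, with the exponents $\beta_i$ chosen proportional to $\alpha_i$; then $-\Delta u_i$ behaves like $\delta^{-\beta_i-2}$ up to lower order, and $\prod_j u_j^{p_{ij}}$ behaves like $\delta^{-\sum_j p_{ij}\beta_j}$, so matching powers forces exactly the linear relation $(I-P)\beta = 2\cdot\mathbf{1}$ up to scaling, i.e. $\beta_i = 2\alpha_i$. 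The requirement that $\mathbf{u}\notin L^\infty$ but $\mathbf{u}\in L^1_\delta$ translates into $\beta_i>0$ (automatic from $\alpha_i>0$) together with the integrability threshold $\beta_i\cdot\frac{\text{(weight)}}{}<\text{const}$, which is where $\alpha_i<(d-1)/2$ enters: $L^1_\delta$ near the boundary tolerates a stronger singularity than $L^1$ or $H^1_0$, which is why the threshold here is $(d-1)/2$ rather than $(d-2)/2$ or $(d-2)/4$. One then verifies that the $a_i$ needed to absorb the lower-order and curvature corrections are bounded and nonnegative, possibly after localizing the singularity and gluing smoothly to a regular solution away from it.

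The main obstacle, and the part requiring the most care, is the bootstrap in~(i): one must set up the recursion on the exponent vectors so that the increments are uniform (bounded away from zero) at every step until the $L^\infty$ threshold is crossed, and this requires a careful linear-algebra lemma exploiting the $M$-matrix structure of the proper principal submatrices of $I-P$ — essentially, that the relevant linear maps are monotone and that their spectral data is pinned down by $\alpha$. The irreducibility assumption is what prevents the system from decoupling into a ``good'' block and a ``bad'' block, and handling the interplay between the product nonlinearity (which couples all components) and the diagonal $r_i$-term (which does not) is the delicate bookkeeping. A secondary technical point is the need to avoid the ``bad'' exponents where $L^p_\delta$--$L^q_\delta$ estimates degenerate (the endpoints of the admissible range of $q$ for the weighted resolvent), which is handled as in \cite{L, QS} by an interpolation/choose-$\eps$ argument at each stage.
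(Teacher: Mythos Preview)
Your plan for part~(i) is correct and is exactly what the paper does: it applies the abstract bootstrap Theorem~\ref{thm:abstract} with $p_c=(d+1)/(d-1)$ and $B^k=L^k_\delta(\Omega)$, after observing that $\mathbf{f}(\cdot,\mathbf{u})\in [L^1_\delta(\Omega)]^n$ together with Proposition~\ref{prop:L1 delta} yields the starting estimate~(\ref{ass:abstract}). You are also right that ``$H_0^1$-solution'' in the statement is a typo for ``$L^1_\delta$-solution''.

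For part~(ii) your general instinct is right --- a boundary singularity with exponents governed by $\alpha$ --- but your concrete ansatz differs from the paper's and carries avoidable difficulties. Taking $u_i=c_i\delta(x)^{-\beta_i}$ makes $u_i$ singular along the \emph{entire} boundary, and then $u_i\in L^1(\Omega)$ (which the definition of $L^1_\delta$-solution requires) forces $\beta_i<1$, far too restrictive. Your interior ansatz $u_i\sim|x|^{-\gamma_i}$ near $0\in\Omega$ cannot see the weight $\delta$ at all and would only reproduce the $L^1$ threshold $(d-2)/2$, not $(d-1)/2$. So ``localizing the singularity'' is not optional, and once you localize to a single boundary point you also have to confront the curvature/sign issues you flag when computing $-\Delta u_i$, in particular ensuring $a_i\ge 0$.

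The paper sidesteps all of this by reversing the construction: it assumes $0\in\partial\Omega$, chooses a cone $\Sigma=\Sigma_1\cap B_R\subset\Omega$ with vertex at $0$, \emph{prescribes the right-hand side} $\phi_i=|x|^{-2(\alpha_i+1)}\mathbf{1}_\Sigma$ (which lies in $L^1_\delta$ precisely when $\alpha_i<(d-1)/2$), and defines $u_i=(-\Delta)^{-1}\phi_i$. Then no computation of a Laplacian is needed; instead one invokes the lower bound $u_i\ge C|x|^{-2\alpha_i}\mathbf{1}_\Sigma$ from \cite[Lemma~5.1]{S}. Since $(I-P)\alpha=-\mathbf{1}$ gives $\sum_j p_{ij}\alpha_j=\alpha_i+1$, one gets $\prod_j u_j^{p_{ij}}\ge C_i\phi_i$ on $\Sigma$, so $a_i:=\phi_i/\prod_j u_j^{p_{ij}}$ is nonnegative and bounded. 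This route avoids both the global-boundary integrability obstruction and the curvature/sign verification that your direct ansatz would require.
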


Our theorems are closely related to the three critical exponents:
\begin{eqnarray*}
  &&p_S:=\left\{
  \begin{array}{ll}
    \infty & \mathrm{if}\ d\leq 2, \\
    (d+2)/(d-2) & \mathrm{if}\ d\geq 3,
  \end{array}
  \right.\\
  &&p_{sg}:=\left\{
  \begin{array}{ll}
    \infty & \mathrm{if}\ d\leq 2, \\
    d/(d-2) & \mathrm{if}\ d\geq 3,
  \end{array}
  \right.\\
  &&p_{BT}:=\left\{
  \begin{array}{ll}
    \infty & \mathrm{if}\ d\leq 1, \\
    (d+1)/(d-1) & \mathrm{if}\ d\geq 2.
  \end{array}
  \right.
\end{eqnarray*}
$p_S$ is the Sobolev exponent. $p_{sg}$ and $p_{BT}$ appear in study
of $L^1$-solutions and $L^1_\delta$-solutions of scalar elliptic
equations respectively. Note that
\begin{eqnarray*}
  \frac{d-2}{4}=\frac{1}{p_S-1},\ \frac{d-2}{2}=\frac{1}{p_{sg}-1},\
  \frac{d-1}{2}=\frac{1}{p_{BT}-1}.
\end{eqnarray*}
So if we write each critical exponent as $p_c$, the optimal
conditions for $L^\infty$-regularity of the above three types of
weak solutions have a consistent form
$\max_{i\in\{1,2,\cdots,n\}}\alpha_i>1/(p_c-1)$ and
$\max_{i\in\{1,2,\cdots,n\}}r_i<p_c$.

In order to justify the above relations, let us recall the optimal
$L^\infty$-regularity for the scalar equation
\begin{eqnarray}
   &-\Delta u=f(x,u),&\ \ {\rm in}\ \Omega, \nonumber\\
     [-1.5ex]\label{eq:main}\\[-1.5ex]
   &u=0,&\ \ {\rm on}\ \partial\Omega,\nonumber
\end{eqnarray}
where $|f|\leq C(1+|u|^p)$ with $p\geq 1$. It is well-known that the
Sobolev exponent $p_S$ plays an important role in the optimal
$L^\infty$-regularity and a priori estimates of the
$H_0^1$-solutions, see \cite{FLN, GS, JL, ZZ} and the references
therein. Any $H_0^1$-solution of (\ref{eq:main}) belongs to
$L^\infty(\Omega)$ if and only if $p\leq p_S$, see for instance
\cite{BK, St}. For the $L^1$-solutions, the critical exponent is
$p_{sg}$. Any $L^1$-solution of (\ref{eq:main}) belongs to
$L^\infty(\Omega)$ if and only if $p<p_{sg}$, see for instance
\cite{A, NS, P}.

The critical exponent $p_{BT}$ first appeared in the work of
Br\'{e}zis \& Turner in \cite{BT}. They obtained a priori estimates
for all positive $H_0^1$-solutions of (\ref{eq:main}) for $p<p_{BT}$
using the method of Hardy-Sobolev inequalities. However the meaning
of $p_{BT}$ was clarified only recently. It was shown by Souplet
\cite[Theorem 3.1]{S} that $p_{BT}$ is the critical exponent for the
$L^\infty$-regularity of $L^1_\delta$-solutions of (\ref{eq:main})
by constructing an unbounded solution with $f=a(x)u^p$ for some
$a\in L^\infty(\Omega)$, $a\geq 0$ if $p>p_{BT}$. The critical case
$p=p_{BT}$ was recently shown to belong to the singular case for
$f=u^p$, see \cite{DMP}, also \cite{MR} for related results.
Moreover, the results of \cite{S} was extended to the case $f=u^p$
when $p>p_{BT}$ is close to $p_{BT}$.

If we set $\alpha=1/(p-1)$, i.e., the solution of $(p-1)\alpha=1$,
the optimal conditions for $L^\infty$-regularity of the above three
types of weak solutions also have a consistent form
$\alpha>1/(p_c-1)$. For more detailed discussions, we refer to the
book \cite[Chapter I]{QS$_2$}.

\vskip 3mm

Using the bootstrap procedure they developed based on linear theory
in $L^p_\delta(\Omega)$, Quittner \& Souplet \cite[Theorem 2.1]{QS}
obtained similar $L^\infty$-regularity condition as Theorem
\ref{thm:very weak solution} (i) assuming that $f_1,f_2$ satisfy
(\ref{ass:QS}). In \cite[Theorem 3.3]{S}, Souplet proved a similar
result as in Theorem \ref{thm:very weak solution} (ii) in the case
$f_1=a_1(x)u_2^{p_{12}}$ and $f_2=a_2(x)u_1^{p_{21}}$ for some
functions $a_1,a_2\in L^\infty(\Omega)$, $a_1,a_2\geq 0$. In
\cite{L}, using the bootstrap procedure he developed, Li obtained
Theorem \ref{thm:H1 solution}-\ref{thm:very weak solution} for
system (\ref{sys:main}) with $f_1,f_2$ satisfying (\ref{ass:Li}).
From assumption (\ref{ass:main}), we know that $p_{ii}<1\ (1\leq
i\leq n)$, so our Theorem \ref{thm:H1 solution}-\ref{thm:very weak
solution} (i) for $n=2$ is a little weaker than those in \cite{L},
where only $p_{ii}<p_c\ (i=1,2)$ is required.

\subsection{Optimal conditions for a priori estimates and existence theorems}

Combining with the linear theory in $L^p_\delta$-spaces, developed
in \cite{FSW}, see also \cite{BV}, our bootstrap procedure enables
us to obtain a priori estimates for system (\ref{sys:main}) with
$\mathbf{f}$ satisfying (\ref{ass:f i}) and
\begin{eqnarray}\label{ass:a priori estimates}
   \sum_{i=1}^n f_i\geq-C_2\sum_{i=1}^n u_i-h_1(x),\ \ \mathbf{u}\in \mathbb{R}^n,\ x\in\Omega,
\end{eqnarray}
where $C_2>0$, $h_1\in L^1_\delta(\Omega)$. By an a priori estimate,
we mean an estimate of the form
\begin{equation}\label{est:a priori estimates}
    \sum_{i=1}^n\|u_i\|_{\infty}\leq C
\end{equation}
for all possible nonnegative solutions of (\ref{sys:main}) (in a
given set of functions), with some constant $C$ independent of
$\mathbf{u}$. Our main result of the a priori estimates is the
following theorem.

\begin{thm}\label{thm:a priori estimates}
Let $f,g$ satisfy $(\ref{ass:f i})$ and $(\ref{ass:a priori
estimates})$ with $(\ref{ass:main})$ and $(\ref{ass:optimal
condition for very weak})$. Then there exists $C>0$ such that for
any nonnegative solution $\mathbf{u}$ of $(\ref{sys:main})$
satisfying
\begin{equation}\label{ass:u v}
    \sum_{i=1}^n\|u_i\|_{L^1_\delta}\leq M,
\end{equation}
it follows that $\mathbf{u}\in [L^\infty(\Omega)]^n$ and
\begin{equation*}
    \sum_{i=1}^n\|u_i\|_{L^\infty}\leq C.
\end{equation*}
The constant $C$ depends only on $M,\Omega,P,r_i,C_1,C_2$.
\end{thm}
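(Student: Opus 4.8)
The plan is to combine the $L^\infty$-regularity result of Theorem \ref{thm:very weak solution} (i) with the linear theory in $L^p_\delta$-spaces (in the spirit of \cite{FSW,BV}), using the extra structural hypothesis (\ref{ass:a priori estimates}) to convert the a priori $L^1_\delta$-bound (\ref{ass:u v}) into an a priori bound on $\|\mathbf{f}(x,\mathbf{u})\|_{L^1_\delta}$, and then run the bootstrap quantitatively. I split the argument into two parts: first, show that (\ref{ass:u v}) implies a uniform bound $\sum_i\|u_i\|_{L^1_\delta}\le M$ already forces $\sum_i\|f_i(x,\mathbf{u})\|_{L^1_\delta}\le C(M)$; second, show that from this the bootstrap procedure used to prove Theorem \ref{thm:very weak solution} (i) can be made effective, i.e. each iteration step gains integrability with constants depending only on the previous step's norm, so that after finitely many steps one reaches $\mathbf{u}\in[L^\infty(\Omega)]^n$ with a bound depending only on $M,\Omega,P,r_i,C_1,C_2$.

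For the first part, I would test the summed equation $-\Delta(\sum_i u_i)=\sum_i f_i(x,\mathbf{u})$ against the solution $\phi_1>0$ of $-\Delta\phi_1=\lambda_1\phi_1$, $\phi_1|_{\partial\Omega}=0$ (which is comparable to $\delta(x)=\mathrm{dist}(x,\partial\Omega)$). Integration by parts gives $\int_\Omega(\sum_i f_i)\phi_1\,dx=\lambda_1\int_\Omega(\sum_i u_i)\phi_1\,dx\le C\lambda_1 M$. Combined with the lower bound (\ref{ass:a priori estimates}), namely $\sum_i f_i\ge -C_2\sum_i u_i - h_1$, this controls the negative part of $\sum_i f_i$ in $L^1_\delta$ by $C_2 M+\|h_1\|_{L^1_\delta}$, and hence — since $\int(\sum_i f_i)\phi_1$ is controlled from above and $(\sum_i f_i)_-\phi_1$ from above — also its positive part: $\int_\Omega(\sum_i f_i)_+\phi_1\,dx\le C(M,C_2,\|h_1\|_{L^1_\delta},\lambda_1)$. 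Because each $f_i$ is bounded above by a sum of nonnegative terms via (\ref{ass:f i}) and the nonnegativity of $\mathbf{u}$, controlling $\sum_i f_i$ controls each $\int_\Omega\prod_j u_j^{p_{ij}}\phi_1\,dx$ and each $\int_\Omega u_i^{r_i}\phi_1\,dx$; together with $h\in L^\theta_\delta$ this yields $\sum_i\|f_i(x,\mathbf{u})\|_{L^1_\delta}\le C(M)$.

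For the second part, I would revisit the proof of Theorem \ref{thm:very weak solution} (i): it is an alternate bootstrap which, starting from $\mathbf{u}\in[L^1_\delta]^n$ and $\mathbf{f}\in[L^1_\delta]^n$, applies the $L^m$–$L^k$ regularity estimate for $-\Delta$ with zero Dirichlet data in weighted spaces, improves the integrability of each $u_i$ along the indices dictated by the matrix $P$ and the exponents $r_i$, and (under (\ref{ass:optimal condition for very weak})) terminates with $\mathbf{u}\in[L^\infty]^n$ after finitely many steps. The only thing to check is that every estimate invoked is \emph{quantitative}: the linear $L^m_\delta$–$L^k_\delta$ estimate is, by definition, of the form $\|w\|_{L^k_\delta}\le C\|\Delta w\|_{L^m_\delta}$ with $C=C(\Omega,m,k)$; and the nonlinear step $\|\prod_j u_j^{p_{ij}}\|_{L^m_\delta}\le\prod_j\|u_j\|_{L^{mp_{ij}s_j}_\delta}^{p_{ij}}$ by Hölder, plus $\|u_i^{r_i}\|_{L^m_\delta}=\|u_i\|_{L^{mr_i}_\delta}^{r_i}$, and $\|h\|_{L^m_\delta}$ with $m\le\theta$, are all explicit in the norms of the previous iterate. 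Hence each iterate norm is bounded by a fixed function of the previous one and of the fixed data, so after the (finitely many, $P$- and $d$-determined) steps the final $L^\infty$-norm is bounded by $C(M,\Omega,P,r_i,C_1,C_2)$, and $\|h_1\|_{L^1_\delta}$ and $\|h\|_{L^\theta_\delta}$ are themselves part of the data.

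The main obstacle I anticipate is bookkeeping rather than conceptual: verifying that the finite bootstrap chain of Theorem \ref{thm:very weak solution} (i) really does terminate with constants depending only on the listed quantities, in particular that the number of steps and the intermediate exponents depend only on $d$, $P$, and $\max_i r_i$ (not on the solution), and that the condition $\max_i\alpha_i>(d-1)/2$ together with $\max_i r_i<(d+1)/(d-1)$ is exactly what guarantees the exponents strictly increase at each step and exceed $d/2$ in finitely many steps — this is the point where the $M$-matrix hypothesis (\ref{ass:main}) enters, ensuring $\alpha_i>0$ and that the relevant linear recursion on exponents converges in the right direction. A secondary delicate point is making sure the passage from the summed equation back to individual $f_i$ uses nonnegativity of $\mathbf{u}$ correctly so that (\ref{ass:f i}) yields pointwise upper bounds by nonnegative quantities; this is where hypothesis (\ref{ass:a priori estimates}), controlling only the \emph{sum} from below, is exactly strong enough.
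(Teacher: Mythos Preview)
Your overall strategy matches the paper's exactly: use (\ref{ass:a priori estimates}) and the eigenfunction test to turn the $L^1_\delta$ bound (\ref{ass:u v}) into the initial integrability needed to launch the bootstrap, then invoke the quantitative bootstrap behind Theorem~\ref{thm:very weak solution}\,(i). The paper simply packages these two steps as Proposition~\ref{porp:L k delta} and Theorem~\ref{thm:abstract}; your observation that every bootstrap step carries explicit constants is precisely why Theorem~\ref{thm:abstract} is stated with a quantitative conclusion, so your second part is fine.

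There is, however, a genuine gap in your first part. After bounding $\int_\Omega(\sum_i f_i)_+\varphi_1$ and $\int_\Omega(\sum_i f_i)_-\varphi_1$, you assert that ``because each $f_i$ is bounded above by a sum of nonnegative terms via (\ref{ass:f i}) \ldots controlling $\sum_i f_i$ controls each $\int_\Omega\prod_j u_j^{p_{ij}}\varphi_1$ and each $\int_\Omega u_i^{r_i}\varphi_1$.'' This does not follow: (\ref{ass:f i}) is an \emph{upper} bound on $|f_i|$, so it reads $\prod_j u_j^{p_{ij}}+u_i^{r_i}\ge C_1^{-1}(|f_i|-h)$, which is the wrong direction to extract control of the nonlinear terms from control of $\sum_i f_i$. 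Likewise, a bound on $\big|\sum_i f_i\big|$ does not by itself bound $\sum_i|f_i|$, since the individual $f_i$ need not be one-signed.

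The fix is immediate and uses the nonnegativity of $\mathbf{u}$ in a different place. From your eigenfunction test you have $\big\|\sum_i f_i\big\|_{L^1_\delta}\le C(M,C_2,\|h_1\|_{L^1_\delta})$. Apply the linear $L^1_\delta$--$L^k_\delta$ estimate (Proposition~\ref{prop:L1 delta} with $m=1$) to the \emph{summed} equation $-\Delta\big(\sum_i u_i\big)=\sum_i f_i$ to obtain $\big\|\sum_i u_i\big\|_{L^k_\delta}\le C$ for every $1\le k<p_{BT}$. Since each $u_i\ge 0$, we have $0\le u_i\le\sum_j u_j$ pointwise, whence $\|u_i\|_{L^k_\delta}\le C$ for every $i$ and every $k<p_{BT}$. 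This is exactly hypothesis (\ref{ass:abstract}) of Theorem~\ref{thm:abstract}, and the rest of your argument (the quantitative bootstrap) goes through.
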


\begin{rem}
(\ref{ass:optimal condition for very weak}) is optimal for the a
priori estimates for the $L^1_\delta$-solutions of the system
(\ref{sys:main}) under the assumptions (\ref{ass:f i}) and
(\ref{ass:a priori estimates}) with (\ref{ass:main}), see Theorem
\ref{thm:very weak solution} (ii).
\end{rem}

Theorem \ref{thm:a priori estimates} in hand, we are able to obtain
general existence theorems for system (\ref{sys:main}). Consider the
system (\ref{sys:main}), subject to (\ref{ass:f i}) with
(\ref{ass:main}) and the superlinearity condition
\begin{eqnarray}\label{ass:superlinear}
   \sum_{i=1}^n f_i\geq \lambda\sum_{i=1}^n u_i-C_1,\ \ \ u_i\geq 0\ (1\leq i\leq n),\
   x\in\Omega,
\end{eqnarray}
where $C_1>0,\ \lambda>\lambda_1$, the first eigenvalue of $-\Delta$
in $H_0^1(\Omega)$.

\begin{thm}\label{thm:main existence} Assume that $\mathbf{f}$ satisfy $(\ref{ass:f i})$ and
$(\ref{ass:superlinear})$ with $(\ref{ass:main})$ and
$(\ref{ass:optimal condition for very weak})$. Then
\begin{enumerate}
  \item[(a)] any nonnegative $L^1_\delta$-solution $\mathbf{u}$ of
             $(\ref{sys:main})$
             belongs to $L^\infty(\Omega)$ and satisfies
             the a priori estimate $(\ref{est:a priori estimates})$;
  \item[(b)] system $(\ref{sys:main})$
             admits a positive $L^1_\delta$-solution $\mathbf{u}$ if in
             addition $\mathbf{f}$ satisfy
             \begin{equation}\label{ass:f and g in addition}
                  \sum_{i=1}^n f_i=o(\sum_{i=1}^n u_i),\ \ \mathrm{as}\ \mathbf{u}\rightarrow 0^+,
             \end{equation}
             uniformly in $x\in\Omega$.
\end{enumerate}
\end{thm}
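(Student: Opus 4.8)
The plan is to deduce Theorem \ref{thm:main existence} from Theorem \ref{thm:a priori estimates} by a now-standard degree-theoretic argument, once the a priori bound has been put in place. First I would observe that the superlinearity hypothesis $(\ref{ass:superlinear})$ implies the one-sided lower bound $(\ref{ass:a priori estimates})$ in the region $u_i\ge 0$ (take $C_2$ related to $\lambda$ and absorb the linear term, using that $-\lambda\sum u_i\le 0$ is false but $\lambda\sum u_i - C_1 \ge -C_2\sum u_i - C_1$ holds trivially with, say, $C_2=0$ and $h_1\equiv C_1$); hence Theorem \ref{thm:a priori estimates} applies. This gives part (a) at once: any nonnegative $L^1_\delta$-solution automatically has $\sum\|u_i\|_{L^1_\delta}$ finite, but to get a bound \emph{independent} of the solution I need the superlinearity to rule out large $L^1_\delta$-norms. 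This is the classical trick: testing the equation against the first eigenfunction $\varphi_1>0$ of $-\Delta$ in $H_0^1(\Omega)$ (which satisfies $\varphi_1\asymp\delta$), summing over $i$, and using $(\ref{ass:superlinear})$ gives $\lambda_1\int(\sum u_i)\varphi_1 = \int(\sum f_i)\varphi_1 \ge \lambda\int(\sum u_i)\varphi_1 - C_1\int\varphi_1$, whence $(\lambda-\lambda_1)\int(\sum u_i)\varphi_1 \le C_1\int\varphi_1$, i.e. a uniform bound on $\sum\|u_i\|_{L^1_\delta}$ since $\lambda>\lambda_1$. Feeding this $M$ into Theorem \ref{thm:a priori estimates} yields $(\ref{est:a priori estimates})$ with a constant depending only on the structural data. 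This proves (a).

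For part (b) I would set up a fixed point / topological degree argument in the cone of nonnegative functions. Define the solution operator $\mathbf{u}\mapsto \mathbf{T}_s\mathbf{u} := (-\Delta)^{-1}\big(s\,\mathbf{f}(x,\mathbf{u}) + (1-s)\,\mathbf{g}(x,\mathbf{u})\big)$ for a suitable deformation, or more simply work directly with $s\mapsto (-\Delta)^{-1}(\mathbf{f}(x,\cdot)+ (\text{something}))$ on $[L^p_\delta]^n$ for $p$ chosen so that the linear theory of \cite{FSW} gives compactness (any $L^1_\delta$-solution is then genuinely in $L^\infty$ by part (a), so the a priori bound is uniform along the whole homotopy provided $(\ref{ass:superlinear})$ and $(\ref{ass:f i})$ persist, which they do since the homotopy parameter only scales). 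The a priori estimate from (a) confines all fixed points to a fixed ball $B_R$, so the Leray--Schauder degree on $B_R$ is homotopy-invariant. At one end of the homotopy, the smallness condition $(\ref{ass:f and g in addition})$, $\sum f_i = o(\sum u_i)$ as $\mathbf{u}\to 0^+$, forces $0$ to be the only small solution and lets me compute the local degree near $0$ (it is that of a sublinear map, hence the degree on a small ball $B_\rho$ equals $1$ or, after incorporating superlinearity at infinity, differs from the degree on $B_R$); the difference between the degrees on $B_R$ and $B_\rho$ is then nonzero, yielding a nontrivial fixed point in $B_R\setminus B_\rho$, which is the desired positive $L^1_\delta$-solution. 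Positivity and the fact that it is a genuine $L^1_\delta$-solution follow from the strong maximum principle and the linear regularity theory.

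The main obstacle I anticipate is the functional-analytic setup making the degree argument rigorous in the $L^1_\delta$ setting: one must choose the right Banach space $X=[L^p_\delta(\Omega)]^n$ and verify that $\mathbf{u}\mapsto (-\Delta)^{-1}\mathbf{f}(x,\mathbf{u})$ is a well-defined compact map there, that nonnegative fixed points coincide with nonnegative $L^1_\delta$-solutions of $(\ref{sys:main})$, and that the a priori bound of part (a) — stated in $L^\infty$ — indeed traps all fixed points of the homotopy inside a ball of $X$. This requires invoking the $L^p_\delta$ linear theory of \cite{FSW}/\cite{BV} carefully, and checking that the growth condition $(\ref{ass:f i})$ keeps $\mathbf{f}(x,\mathbf{u})$ in $L^1_\delta$ whenever $\mathbf{u}\in L^\infty$, which is automatic. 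The remaining ingredients — the eigenfunction test for the $L^1_\delta$ bound, the zero-degree computation near the origin, and homotopy invariance — are routine and parallel the scalar case in \cite[Chapter I]{QS$_2$} and the two-component case in \cite{L, QS}, so I would reference those and only spell out the points where the $n$-component structure and assumption $(\ref{ass:main})$ (irreducibility, $M$-matrix condition on sub-matrices) enter, namely through Theorem \ref{thm:a priori estimates}.
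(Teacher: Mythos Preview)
Your proposal is correct and follows essentially the same route as the paper: part (a) is exactly the eigenfunction test against $\varphi_1$ (this is Proposition~\ref{prop:QS1}, quoted from \cite{QS}) combined with Theorem~\ref{thm:a priori estimates}, and part (b) is the standard fixed-point-index argument in a cone with the two homotopies $H_1(\mu,\mathbf{u})=\mu T(\mathbf{u})$ and $H_2(\mu,\mathbf{u})=S(\mathbf{f}(\cdot,\mathbf{u})+\mu(\mathbf{u}+1))$ giving indices $1$ and $0$ on small and large balls respectively. The one simplification the paper makes that dissolves your ``main obstacle'' is to set up the degree directly in $X=[L^\infty(\Omega)]^n$ rather than in $[L^p_\delta(\Omega)]^n$: since part (a) already guarantees every nonnegative fixed point lies in $L^\infty$, compactness of $T=S\circ\mathbf{f}(\cdot,\mathbf{u})$ on $X$ is immediate from standard elliptic regularity, and no $L^p_\delta$ functional-analytic issues arise.
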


For $n=2$, under assumptions (\ref{ass:Li}),
(\ref{ass:superlinear}), similar results as the above theorem was
obtained in \cite{L}, see also \cite{QS, CFM$_2$, F, FY, Zou} for
more related results.

\vskip 3mm

The second existence theorem is about the system
\begin{eqnarray}
   &-\Delta u_i=a_i(x)\prod_{j=1}^n u_j^{p_{ij}}-b_i(x)u_i,\ \ &{\rm in}\ \Omega, \nonumber\\
       [-1.5ex]&&\hskip 20mm (i=1,2,\cdots,n)\label{sys:nuclear reactor}\\[-1.5ex]
   &u_i=0,\ \ &{\rm on}\ \partial\Omega,\ \ \nonumber
\end{eqnarray}
where $P$ satisfies (\ref{ass:main}), $a_i,b_i\in L^\infty(\Omega)$,
$a_i\geq 0$, $\int_\Omega a_i>0$,
$\inf\{\mathrm{spec}(-\Delta+b_i)\}>0$ $(i=1,2,\cdots,n)$.

\begin{thm}\label{thm:secondary existence} Assume that
\begin{eqnarray}\label{ass:optimal condition for secondary}
                  \displaystyle\max_{i\in\{1,2,\cdots,n\}}\alpha_i>\frac{d-1}{2}.
\end{eqnarray}
Then
\begin{enumerate}
  \item[(a)] any nonnegative $L^1_\delta$-solution $\mathbf{u}$ of
             $(\ref{sys:nuclear reactor})$
             belongs to $L^\infty(\Omega)$ and satisfies
             the a priori estimate $(\ref{est:a priori estimates})$;
  \item[(b)] system $(\ref{sys:nuclear reactor})$
             admits a positive $L^1_\delta$-solution $\mathbf{u}$.
\end{enumerate}
\end{thm}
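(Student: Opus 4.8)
\textbf{Proof proposal for Theorem \ref{thm:secondary existence}.}

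The plan is to deduce this theorem from Theorem \ref{thm:main existence} by verifying that the right-hand side $f_i = a_i(x)\prod_{j=1}^n u_j^{p_{ij}} - b_i(x) u_i$ satisfies all the structural hypotheses required there, namely $(\ref{ass:f i})$, $(\ref{ass:superlinear})$, $(\ref{ass:main})$, $(\ref{ass:optimal condition for very weak})$, and, for part (b), the sublinearity-at-zero condition $(\ref{ass:f and g in addition})$. First I would check $(\ref{ass:f i})$: since $a_i, b_i \in L^\infty(\Omega)$ and $a_i \geq 0$, we have $|f_i| \leq \|a_i\|_\infty \prod_j u_j^{p_{ij}} + \|b_i\|_\infty |u_i|$, which is exactly of the form $(\ref{ass:f i})$ with $r_i = 1$ (so in particular $\max_i r_i = 1 < p_{BT}$, and the $h \equiv 0$ part is trivially in the required weighted space). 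Condition $(\ref{ass:main})$ is imposed directly on $P$ in the statement of system $(\ref{sys:nuclear reactor})$, and $(\ref{ass:optimal condition for very weak})$ is exactly the displayed hypothesis $(\ref{ass:optimal condition for secondary})$. Thus the only genuine work is the superlinearity condition $(\ref{ass:superlinear})$ and the zero-behavior condition $(\ref{ass:f and g in addition})$.

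For the zero-behavior condition $(\ref{ass:f and g in addition})$, since each $p_{ii} < 1$ by $(\ref{ass:main})$ and all $p_{ij} \geq 0$, and $I - P$ is irreducible, one shows that each row sum or, more carefully, the homogeneity of the monomial $\prod_j u_j^{p_{ij}}$ forces $\prod_j u_j^{p_{ij}} = o(\sum_j u_j)$ as $\mathbf u \to 0^+$ whenever $\sum_j p_{ij} > 1$; if some row has $\sum_j p_{ij} \leq 1$ one argues directly on that monomial. In either case the $-b_i u_i$ term is exactly linear, so $\sum_i f_i = o(\sum_i u_i)$ fails on that linear part — this is the subtle point and is precisely why the hypothesis $\inf\{\mathrm{spec}(-\Delta+b_i)\} > 0$ is needed: one should first absorb the linear damping into the operator, i.e. rewrite the system as $-\Delta u_i + b_i(x) u_i = a_i(x)\prod_j u_j^{p_{ij}}$ and apply the abstract existence scheme to the shifted operators $-\Delta + b_i$, whose positive spectrum guarantees invertibility and a maximum principle. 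After this reformulation the nonlinearity is the pure positive monomial $a_i \prod_j u_j^{p_{ij}}$, and superlinearity $(\ref{ass:superlinear})$ follows from $\int_\Omega a_i > 0$ together with the standard fact that, $I-P$ being an irreducible matrix with $|I-P| < 0$ and all principal submatrices of rank $\leq n-1$ nonsingular $M$-matrices, the monomial map dominates any linear function for large $\mathbf u$ along the Perron direction $\alpha$.

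Concretely, the key steps in order: (1) subtract the damping, working with $L_i := -\Delta + b_i$, and note the $L^p_\delta$ linear theory of \cite{FSW} applies verbatim to $L_i$ because $\inf \mathrm{spec}(L_i) > 0$; (2) verify $(\ref{ass:f i})$ with $r_i = 1$, $h = 0$, so Theorem \ref{thm:a priori estimates}'s a priori bound and Theorem \ref{thm:main existence}(a) give part (a) here; (3) verify $(\ref{ass:superlinear})$ using $\int_\Omega a_i > 0$ by testing against the first eigenfunction and using the $M$-matrix/Perron structure to pick a direction in which $\sum_i a_i \prod_j u_j^{p_{ij}}$ outgrows $\lambda \sum_i u_i$; (4) verify $(\ref{ass:f and g in addition})$ from $p_{ii} < 1$ and $p_{ij} \geq 0$ (the monomial is genuinely sublinear near the origin); (5) invoke Theorem \ref{thm:main existence}(b) to obtain a positive $L^1_\delta$-solution. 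The main obstacle will be step (3): showing that the coupled monomial nonlinearity is superlinear in the averaged sense $(\ref{ass:superlinear})$ despite individual monomials possibly being sublinear in some variables — this is exactly where the hypotheses $|I-P|<0$ and the nonsingular-$M$-matrix condition on proper principal submatrices are used, and one should exploit that the Perron eigenvector $\alpha > 0$ of the associated structure makes $\prod_j (t\alpha_j)^{p_{ij}} \sim t^{1/\alpha_i + 1}$ grow strictly faster than $t\alpha_i$ as $t \to \infty$.
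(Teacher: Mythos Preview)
Your reduction to Theorem \ref{thm:main existence} breaks at step (3), and this is a genuine gap rather than a detail to fill in. Condition $(\ref{ass:superlinear})$ is a \emph{pointwise} inequality in $x\in\Omega$: after absorbing $b_i$ into the operator you would need
\[
\sum_{i=1}^n a_i(x)\prod_{j=1}^n u_j^{p_{ij}}\ \geq\ \lambda\sum_{i=1}^n u_i - C_1
\quad\text{for all }u_i\geq 0,\ x\in\Omega,
\]
with $\lambda$ larger than the relevant first eigenvalue. But the only hypothesis on $a_i$ is $a_i\geq 0$, $\int_\Omega a_i>0$; the $a_i$ may vanish on sets of positive measure, and on such sets the left-hand side is identically $0$ while the right-hand side is unbounded in $u$. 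Even when $a_i>0$ a.e., your ``Perron direction'' remark only handles one ray in $\mathbb{R}^n_+$; along, say, $u_1=t$, $u_j=0$ for $j\geq 2$, every monomial $\prod_j u_j^{p_{ij}}$ containing a factor $u_j$ with $j\geq 2$ vanishes, and irreducibility of $I-P$ forces this for enough rows that the inequality cannot be uniform in $x$ without a positive lower bound on the $a_i$. ``Testing against the first eigenfunction'' does not rescue a pointwise hypothesis. Step (4) has the same flavor of trouble: nothing in $(\ref{ass:main})$ forces every row sum $\sum_j p_{ij}>1$, and if some row sum is $\leq 1$ then along the diagonal $u_j=t$ that monomial is \emph{not} $o(\sum_j u_j)$ as $t\to 0^+$, so $(\ref{ass:f and g in addition})$ fails as well.

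The paper therefore does \emph{not} pass through Theorem \ref{thm:main existence}. For part (a) it checks $(\ref{ass:a priori estimates})$ (which does hold, with $C_2=\max_i\|b_i\|_\infty$) and then proves a dedicated $L^1_\delta$ bound, Proposition \ref{prop:nuclear reactor}: using the two-sided Green's function comparison for $-\Delta+b_i$ one gets $u_i\geq C A_i\,\varphi_{b_i}$ with $A_i=\int_\Omega a_i\prod_j u_j^{p_{ij}}\varphi_{b_i}$, substitutes back to obtain $A_i\geq C\prod_j A_j^{p_{ij}}$, and then the $M$-matrix structure of $I-P$ bounds $\sum_i A_i$. This replaces Proposition \ref{prop:QS1} and feeds directly into Theorem \ref{thm:a priori estimates}. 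For part (b) the paper runs the fixed-point-index argument \emph{directly} with the solution operator of $-\Delta+b_i$: the small-ball index uses the inequalities $\|u_i\|_\infty\leq C\prod_j\|u_j\|_\infty^{p_{ij}}$ and the same $M$-matrix algebra (no appeal to $(\ref{ass:f and g in addition})$), and the large-ball index uses the homotopy $H_2$ tested against $\varphi_{b_1}$ (no appeal to $(\ref{ass:superlinear})$). Your instinct to shift $b_i$ into the operator is exactly right; what is missing is that after the shift you must replace the two abstract structural hypotheses by the concrete Green's-function/$M$-matrix computations above.
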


For $n=2$, Theorem \ref{thm:a priori estimates}, \ref{thm:main
existence} and \ref{thm:secondary existence} were proved in \cite{L}
for system (\ref{sys:main}) with $f_1,f_2$ satisfying
(\ref{ass:Li}). A similar result as Theorem \ref{thm:a priori
estimates} for system (\ref{sys:main}) with $f_1,f_2$ satisfying
(\ref{ass:QS}) was proved in \cite[Theorem 2.1]{QS}. For system
(\ref{sys:secondary}) with $n=2$, a similar existence result as
Theorem \ref{thm:secondary existence} was proved in \cite[Theorem
1.4]{QS} but under more stronger assumptions.

To the author's knowledge, in order to obtain a priori estimates for
system (\ref{sys:main}) with $n(n\geq 3)$ components, conditions
such as $|\mathbf{f}(x,\mathbf{u})|\leq C(1+|\mathbf{u}|^\sigma)$ or
system (\ref{sys:main}) is of variational structure were often
assumed. Using a simple bootstrap procedure, Nussbaum \cite{N}
obtained a priori estimates (\ref{est:a priori estimates}) for
system (\ref{sys:main}) assuming that
$|\mathbf{f}(x,\mathbf{u})|\leq C(1+|\mathbf{u}|^\sigma)$, where
$\sigma\leq d/(d-1)$. Also using a simple bootstrap procedure,
Cosner \cite{C} obtained a priori estimates (\ref{est:a priori
estimates}) assuming that $|\mathbf{f}(x,\mathbf{u})|\leq
C(1+|\mathbf{u}|^\sigma)$, where $\sigma\leq (d+1)/(d-1)$. His
results are more close to ours. For system (\ref{sys:main}) of
variational structure, we refer to \cite{BG, Se} and the references
therein.

\begin{rem}
Consider system (\ref{sys:main}) with boundary conditions of the
form $u_{i\nu}=a_iu_i \ (1\leq i\leq n)$, where $a_i\in \mathbb{R}$
and $u_{i\nu}$ denotes the derivative of $u_i$ with respect to the
outer unit normal on $\partial\Omega$. If, for example, $\mathbf{f}$
satisfy
\begin{eqnarray*}
   \sum_{i=1}^n f_i\geq C_1(\sum_{i=1}^n\lambda_1(a_i)u_i)-C_2,\ \ \ u_i\geq 0,\ x\in\Omega,
\end{eqnarray*}
where $C_1>1$, $C_2\geq 0$ and $\lambda_1(a_i)$ denotes the first
eigenvalue of $-\Delta$ with boundary conditions $u_{i\nu}=a_iu_i$,
then it is easy to deduce that
\begin{equation*}
    \sum_{i=1}^n\|u_i\|_{L^1}\leq M,
\end{equation*}
with $M$ independent of $\mathbf{u}$. The proof of Theorem
\ref{thm:abstract} (in Section 2) implies (\ref{est:a priori
estimates}). Using this a priori estimate, we also have a similar
existence theorem of $L^1$-solutions of system (\ref{sys:main}) with
Neumann conditions as Theorem \ref{thm:main existence}.
\end{rem}

Applying Theorem \ref{thm:secondary existence}, we have a existence
corollary for system (\ref{sys:secondary}).

\begin{cor} Assume that $(\ref{ass:main})$ and $(\ref{ass:optimal condition for secondary})$
hold. Then system $(\ref{sys:secondary})$ admits a positive
classical solution $\mathbf{u}$.
\end{cor}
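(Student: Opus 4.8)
The plan is to read off the Corollary from Theorem \ref{thm:secondary existence} by a suitable specialization of the coefficients, followed by a standard elliptic bootstrap. First I would take in $(\ref{sys:nuclear reactor})$ the choice $a_i\equiv 1$ and $b_i\equiv 0$ for $1\le i\le n$; then $(\ref{sys:nuclear reactor})$ becomes exactly $(\ref{sys:secondary})$, and all the structural hypotheses on the coefficients are trivially verified: $a_i\equiv 1\in L^\infty(\Omega)$, $a_i\ge 0$, $\int_\Omega a_i=|\Omega|>0$, and $\inf\{\mathrm{spec}(-\Delta+b_i)\}=\inf\{\mathrm{spec}(-\Delta)\}=\lambda_1>0$. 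Since $P$ is assumed to satisfy $(\ref{ass:main})$ and $(\ref{ass:optimal condition for secondary})$ is precisely the hypothesis of Theorem \ref{thm:secondary existence}, that theorem applies and furnishes a positive $L^1_\delta$-solution $\mathbf{u}$ of $(\ref{sys:secondary})$; moreover part (a) of that theorem (equivalently Theorem \ref{thm:very weak solution} (i), since here $r_i=0$ and $h\equiv 0$) gives $\mathbf{u}\in[L^\infty(\Omega)]^n$.

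It then remains to upgrade this bounded weak solution to a classical one. Writing $g_i(x):=\prod_{j=1}^n u_j(x)^{p_{ij}}$, we have $g_i\in L^\infty(\Omega)$ because $u_j\in L^\infty(\Omega)$ and $u_j\ge 0$. Applying the $L^p$-theory for the Dirichlet Laplacian to $-\Delta u_i=g_i$ with $u_i=0$ on $\partial\Omega$ (recall $\Omega$ is smoothly bounded), we obtain $u_i\in W^{2,p}(\Omega)$ for every $p<\infty$, hence $u_i\in C^{1,\gamma}(\overline{\Omega})$ for every $\gamma\in(0,1)$ by Sobolev embedding. In particular each $u_j$ is Hölder continuous on $\overline{\Omega}$; since the scalar map $t\mapsto t^{p_{ij}}$ is locally Hölder continuous on $[0,\infty)$ (one has $|t^{p}-s^{p}|\le|t-s|^{p}$ for $0<p\le 1$, it is Lipschitz on bounded sets for $p\ge 1$, and constant for $p=0$), each $u_j^{p_{ij}}$, and therefore the finite product $g_i$, is Hölder continuous on $\overline{\Omega}$. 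Schauder estimates for $-\Delta u_i=g_i\in C^{0,\beta}(\overline{\Omega})$ then yield $u_i\in C^{2,\beta}(\overline{\Omega})$, so $\mathbf{u}$ is a classical solution. Since $\mathbf{u}$ is positive and $-\Delta u_i=g_i\ge 0$, the strong maximum principle gives $u_i>0$ in $\Omega$, so $\mathbf{u}$ is positive in the classical sense.

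The argument is essentially routine, and I do not anticipate a genuine obstacle, since the existence itself is entirely contained in Theorem \ref{thm:secondary existence} and the passage to classical regularity is standard linear elliptic theory. The only two points deserving a word of care are the verification of the spectral hypothesis $\inf\{\mathrm{spec}(-\Delta+b_i)\}>0$ (immediate for $b_i\equiv 0$) and the observation that a finite product of nonnegative powers of Hölder continuous functions is again Hölder continuous — this is what makes the step from $W^{2,p}$ to $C^{2,\beta}$ legitimate even when some exponents $p_{ij}$ are non-integer and smaller than one.
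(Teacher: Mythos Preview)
Your proposal is correct and follows exactly the route the paper intends: the paper states the Corollary immediately after Theorem~\ref{thm:secondary existence} with the one-line justification ``Applying Theorem~\ref{thm:secondary existence}'', which is precisely your specialization $a_i\equiv 1$, $b_i\equiv 0$. The upgrade from a bounded $L^1_\delta$-solution to a classical one via $L^p$ and Schauder theory is left implicit in the paper, and your write-up of that step (including the care about H\"older continuity of $t\mapsto t^{p_{ij}}$ for $0<p_{ij}<1$) is the standard way to fill it in.
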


Using the blow-up method, Zou \cite[Theorem 1.1]{Zou$_2$} obtained a
priori estimates (\ref{est:a priori estimates}) for system
(\ref{sys:secondary}) assuming that $p_{ii}\geq 1,\
\sum_{j=1}^np_{ij}\leq (d+2)/(d-2)_+\ (1\leq i\leq n)$ and $I-P$ is
nonsingular. Using the a priori estimate, he obtained an existence
theorem for system (\ref{sys:secondary}) with $n=2$. See also
\cite{Guo} for related results.

\vskip 3mm

In next two sections, we present our bootstrap procedure. In Section
4, we prove Theorem \ref{thm:H1 solution}-\ref{thm:very weak
solution}. In Section 5, we prove Theorem \ref{thm:a priori
estimates}-\ref{thm:secondary existence}.

\section{The Bootstrap Procedure for System with Three Components}

In what follows we give the definitions of three types of weak
solutions of system (\ref{sys:main}), see \cite[Chapter I]{QS$_2$}.

\begin{defn}
\begin{enumerate}
  \item[(i)] By an $H_0^1$-solution of system (\ref{sys:main}), we mean a vector $\mathbf{u}$ with
\begin{eqnarray*}
    \mathbf{u}\in [H_0^1(\Omega)]^n,\ \
    \mathbf{f}(\cdot,\mathbf{u})\in [H^{-1}(\Omega)]^n,
\end{eqnarray*}
satisfying
\begin{eqnarray*}
     \displaystyle\int_\Omega \nabla \mathbf{u}\cdot\nabla\varphi=\int_\Omega
     \mathbf{f}(\cdot,\mathbf{u})\varphi,\ \ \
     \mathrm{for\ all}\ \varphi\in H_0^1(\Omega).
\end{eqnarray*}
  \item[(ii)] By an $L^1$-solution of system (\ref{sys:main}), we mean a vector $\mathbf{u}$ with
\begin{equation*}
    \mathbf{u}\in [L^1(\Omega)]^n,\ \ \ \mathbf{f}(\cdot,\mathbf{u})\in
    [L^1(\Omega)]^n,
\end{equation*}
satisfying
\begin{eqnarray}\label{defn:L1 solution}
     \displaystyle\int_\Omega \mathbf{u}\Delta\varphi=\int_\Omega
     \mathbf{f}(\cdot,\mathbf{u})\varphi,\ \ \ \mathrm{for\
        all}\ \varphi\in C^2(\overline{\Omega}),\
        \varphi|_{\partial\Omega}=0.
\end{eqnarray}
  \item[(iii)] Set $\delta(x):=\mathrm{dist}(x,\partial\Omega)$ and
         $L^1_\delta(\Omega):=L^1(\Omega;\delta(x)\mathrm{d}x)$.
         By an $L^1_\delta$-solution of system (\ref{sys:main}), we mean a vector $\mathbf{u}$ with
\begin{equation*}
    \mathbf{u}\in [L^1(\Omega)]^n,\ \ \ \mathbf{f}(\cdot,\mathbf{u})\in
    [L^1_\delta(\Omega)]^n,
\end{equation*}
satisfying (\ref{defn:L1 solution}).
\end{enumerate}
\end{defn}

The three types of weak solutions of the scalar equation
(\ref{eq:main}) and the linear equation
\begin{eqnarray}\label{eq:linear}
   -\Delta u=\phi,&\ \ {\rm in}\ \Omega;\ \
   u=0,&\ \ {\rm on}\ \partial\Omega,
\end{eqnarray}
are defined similarly. According to \cite[Lemma 1]{BCMR}, if
$\phi\in L^1_\delta(\Omega)$, (\ref{eq:linear}) admits a unique
$L^1_\delta$-solution $u\in L^1(\Omega)$. Moreover, $\|u\|_{L^1}\leq
C\|\phi\|_{L^1_\delta}$ and $\phi\geq 0$ a.e. implies $u\geq 0$ a.e.

The most important regularity results for $L^1$-solutions of the
linear equation (\ref{eq:linear}) is the following
$L^m$-$L^k$-estimates.

\begin{prop}{\rm (see for instance \cite[Proposition 47.5]{QS$_2$})}\label{prop:L1}
Let $1\leq m\leq k\leq \infty$ satisfy
\begin{equation}\label{ineq:m k L1}
   \frac{1}{m}-\frac{1}{k}<\frac{2}{d}.
\end{equation}
Let $u\in L^1(\Omega)$ be the unique $L^1$-solution of
$(\ref{eq:linear})$. If $\phi\in L^m(\Omega)$, then $u\in
L^k(\Omega)$ and satisfies the estimate $\|u\|_{L^k}\leq
C(\Omega,m,k)\|\phi\|_{L^m}$.
\end{prop}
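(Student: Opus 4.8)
The plan is to reduce the statement to a pointwise bound via the Dirichlet Green's function $G$ of $\Omega$ and then invoke Young's convolution inequality. First I would recall the classical kernel estimate $0\le G(x,y)\le c_d\,\Gamma(x-y)$, uniformly on $\Omega\times\Omega$, where $\Gamma$ is the fundamental solution of $-\Delta$ on $\mathbb{R}^d$ (so $\Gamma(z)=c_d|z|^{2-d}$ if $d\ge 3$, $\Gamma(z)=c_2\log(1/|z|)$ if $d=2$, and $\Gamma$ bounded if $d\le 1$); this holds up to the boundary for smoothly bounded $\Omega$. Next, given $\phi\in L^m(\Omega)$, I would set $v(x):=\int_\Omega G(x,y)\phi(y)\,dy$ and check that $v$ is an $L^1_\delta$-solution of $(\ref{eq:linear})$: for every $\varphi\in C^2(\overline{\Omega})$ with $\varphi|_{\partial\Omega}=0$ one has the representation $\varphi(y)=\int_\Omega G(y,x)(-\Delta\varphi)(x)\,dx$, and Fubini then gives the required weak identity. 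Since $\phi\in L^m(\Omega)\subset L^1_\delta(\Omega)$ and $v\in L^1(\Omega)$ (by the $L^1$-bound recalled before the proposition), the uniqueness of $L^1_\delta$-solutions (\cite[Lemma 1]{BCMR}) forces $u=v$ a.e.

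With $u$ identified with the Green potential, the kernel bound yields $|u(x)|\le c_d\big((\Gamma\mathbf{1}_{B_R})*|\phi|\big)(x)$, where $B_R$ is any ball containing $\Omega-\Omega$ and $\phi$ is extended by $0$ outside $\Omega$. Young's inequality then gives $\|u\|_{L^k(\Omega)}\le c_d\,\|\Gamma\mathbf{1}_{B_R}\|_{L^r}\,\|\phi\|_{L^m(\Omega)}$ whenever $1+\tfrac1k=\tfrac1r+\tfrac1m$, i.e. $\tfrac1r=1-(\tfrac1m-\tfrac1k)$ (note $r\ge 1$ automatically, since $m\le k$). The decisive observation is that $\Gamma\mathbf{1}_{B_R}\in L^r$ precisely when $(d-2)r<d$ for $d\ge 3$ (and for all $r<\infty$ when $d\le 2$), and a one-line computation shows $(d-2)r<d\iff\tfrac1r>1-\tfrac2d\iff\tfrac1m-\tfrac1k<\tfrac2d$, which is exactly hypothesis $(\ref{ineq:m k L1})$; the endpoint exponents $m=1$ (then $r=k$) and $k=\infty$ (then $r=m'$) are covered by the same inequality. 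This produces the asserted estimate with $C(\Omega,m,k)=c_d\|\Gamma\mathbf{1}_{B_R}\|_{L^r}$.

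The hard part is really only the first paragraph: one must make sure that the "$L^1$-solution" supplied by the hypothesis is the Green potential $v$ and not some other distributional solution. This is settled by the uniqueness of $L^1_\delta$-solutions for data in $L^1_\delta(\Omega)$; alternatively one can bypass it by approximation, choosing $\phi_n\in C_c^\infty(\Omega)$ with $\phi_n\to\phi$ in $L^m(\Omega)$, letting $u_n$ be the classical solution (so $u_n=\int_\Omega G(\cdot,y)\phi_n(y)\,dy$), applying the convolution estimate above together with $\|u_n-u\|_{L^1}\le C\|\phi_n-\phi\|_{L^1_\delta}\to 0$, and passing to the limit along a subsequence converging a.e. For $m>1$ one could instead combine interior and global Calder\'on--Zygmund estimates ($u\in W^{2,m}\cap W^{1,m}_0$) with the Sobolev embedding $W^{2,m}\hookrightarrow L^k$, but that route does not reach $m=1$, so the Green-function argument is the natural uniform one.
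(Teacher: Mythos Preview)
The paper does not actually prove this proposition: it is quoted from \cite[Proposition 47.5]{QS$_2$} and used as a black box, so there is no ``paper's own proof'' to compare against. Your argument via the Dirichlet Green function bound $0\le G(x,y)\le c_d\Gamma(x-y)$ and Young's convolution inequality is correct and is in fact the standard route to this estimate (and essentially what one finds in the cited reference). The identification of the given $L^1$-solution with the Green potential via the uniqueness result of \cite{BCMR}, or alternatively by approximation, is handled properly, and the exponent bookkeeping $(d-2)r<d\iff \tfrac1m-\tfrac1k<\tfrac2d$ is accurate in all dimensions, including the borderline cases $m=1$ and $k=\infty$.
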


It is well known that the condition (\ref{ineq:m k L1}) is optimal.
For example, let $\Omega=B_1$ be the unit ball. For $1\leq m<k\leq
\infty$, let $d/k<\theta<d/m-2$, which follows from $1/m-1/k>2/d$.
Then $U(r)=r^{-\theta}-1$ is the unique $L^1$-solution of $-\Delta
U=\phi:=\theta(d-\theta-2)r^{-\theta-2}$. But $\phi\in L^m(B_1)$ and
$U\notin L^k(B_1)$, see also \cite[Chapter I]{QS$_2$}.

Obviously, Proposition \ref{prop:L1} holds for the $H_0^1$-solution
of (\ref{eq:linear}). But it is not convenient to derive the optimal
condition for $L^\infty$-regularity of the $H_0^1$-solutions of
system (\ref{sys:main}). We have the following $L^m$-$L^k$-estimates
for $H_0^1$-solutions. Let $d\geq 3$, set $2_*:=2d/(d+2)$. It is the
conjugate number of the Sobolev imbedding exponent, $2d/(d-2)$.

\begin{prop}{\rm (\cite[Proposition 2.2]{L})}\label{prop:H1}
Let $1\leq m\leq k\leq \infty$ satisfy
\begin{equation}\label{ineq:m k H1}
   \frac{1}{m}-\frac{1}{k}<\frac{4}{d+2}.
\end{equation}
Let $u\in H_0^1(\Omega)$ be the unique $H_0^1$-solution of
$(\ref{eq:linear})$. If $\phi\in L^{2_*m}(\Omega)$, then $u\in
L^{2_*k}(\Omega)$ and satisfies the estimate $\|u\|_{L^{2_*k}}\leq
C(\Omega,m,k)\|\phi\|_{L^{2_*m}}$.
\end{prop}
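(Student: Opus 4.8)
The plan is to derive Proposition \ref{prop:H1} from the standard $L^m$-$L^k$ estimate for $L^1$-solutions (Proposition \ref{prop:L1}) by a change of unknown that converts the Sobolev-type scaling built into $H_0^1$ into an ordinary $L^m$-$L^k$ statement. The key observation is that the condition $\tfrac1m-\tfrac1k<\tfrac{4}{d+2}$ is exactly $\tfrac{1}{2_*m}-\tfrac{1}{2_*k}<\tfrac{2}{d}$, since $2_*=2d/(d+2)$ so $\tfrac{1}{2_*}\cdot\tfrac{4}{d+2}=\tfrac{d+2}{2d}\cdot\tfrac{4}{d+2}=\tfrac{2}{d}$. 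Thus the hypothesis on $(m,k)$ in Proposition \ref{prop:H1} is literally the hypothesis of Proposition \ref{prop:L1} applied to the exponents $2_*m$ and $2_*k$, provided $2_*m\geq 1$, which holds because $m\geq 1$ and $2_*=2d/(d+2)\geq 1$ for $d\geq 2$ (and we are in the regime $d\geq 3$).

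First I would set up the comparison between the $H_0^1$-solution and the $L^1$-solution of $(\ref{eq:linear})$. By elliptic regularity/uniqueness, for $\phi\in L^{2_*m}(\Omega)\subset L^1(\Omega)$ (note $2_*m\geq 1$, so $L^{2_*m}(\Omega)\hookrightarrow L^1(\Omega)$ on the bounded domain $\Omega$), the $H_0^1$-solution $u$ coincides with the unique $L^1$-solution, so it suffices to bound the $L^1$-solution. I would then simply invoke Proposition \ref{prop:L1} with the pair of exponents $(2_*m,\,2_*k)$ in place of $(m,k)$: since $\tfrac{1}{2_*m}-\tfrac{1}{2_*k}<\tfrac{2}{d}$ and $1\leq 2_*m\leq 2_*k\leq\infty$, Proposition \ref{prop:L1} yields $u\in L^{2_*k}(\Omega)$ together with $\|u\|_{L^{2_*k}}\leq C(\Omega,2_*m,2_*k)\|\phi\|_{L^{2_*m}}$, which is exactly the claimed estimate after absorbing the fixed factor $2_*$ into the dependence of the constant on $m,k$. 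When $2_*k=\infty$ one reads this as $L^{2_*k}=L^\infty$ in the usual way, consistent with the endpoint in Proposition \ref{prop:L1}.

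The only genuinely delicate point is verifying that the $H_0^1$-solution really is the $L^1$-solution for the source regularity at hand, and that the boundary-value framework matches: the $L^1$-solution in Proposition \ref{prop:L1} is the one satisfying the weak formulation $(\ref{defn:L1 solution})$ against test functions in $C^2(\overline\Omega)$ vanishing on $\partial\Omega$, and one needs that an $H_0^1$-solution of the \emph{linear} equation with $L^1$ (indeed $L^{2_*m}$) datum satisfies this formulation and that such a solution is unique. Uniqueness for $L^1$-solutions of $(\ref{eq:linear})$ is the cited result of \cite[Lemma 1]{BCMR} (via duality, any two differ by an $L^1$-solution of the homogeneous problem, which is $0$); and that an $H_0^1$-solution satisfies $(\ref{defn:L1 solution})$ follows by density, integrating by parts twice and using $\varphi|_{\partial\Omega}=0$ together with $u\in H_0^1$. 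Once this identification is in place the proof is a one-line reduction. I expect the main obstacle, such as it is, to be purely bookkeeping: keeping the endpoint case $k=\infty$ and the borderline integrability $2_*m\geq 1$ straight, and making sure the constant is correctly tracked as depending on $m,k$ (through $2_*m,2_*k$) and $\Omega$ only.
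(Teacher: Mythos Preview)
The paper does not give its own proof of this proposition; it simply cites \cite[Proposition 2.2]{L}. Your argument is correct and is the natural one: the change of exponents $(m,k)\mapsto(2_*m,2_*k)$ converts the hypothesis $\tfrac{1}{m}-\tfrac{1}{k}<\tfrac{4}{d+2}$ into $\tfrac{1}{2_*m}-\tfrac{1}{2_*k}<\tfrac{2}{d}$, and the identification of the $H_0^1$-solution with the unique $L^1$-solution (using $L^{2_*m}\hookrightarrow L^{2_*}\hookrightarrow H^{-1}$ for existence of the former, and \cite[Lemma~1]{BCMR} for uniqueness of the latter) then reduces the claim to Proposition~\ref{prop:L1}.
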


The above proposition in hand, the $L^\infty$-regularity of the
$H_0^1$-solutions of (\ref{eq:main}) with $|f|\leq C(1+|u|^p)$ with
$1\leq p<p_S$ follows immediately from a simple bootstrap argument.
It is much simpler than the usual proof, see \cite{BK, St, QS$_2$}.

\vskip 3mm

For all $1\leq k\leq\infty$, define the spaces
$L^k_\delta(\Omega)=L^k(\Omega;\delta(x)\mathrm{d}x)$. For $1\leq
k<\infty$, $L^k_\delta(\Omega)$ is endowed with the norm
\begin{equation*}
    \|u\|_{L^k_\delta}=\left(\int_\Omega|u(x)|^k\delta(x)\mathrm{d}x\right)^{1/k}.
\end{equation*}
Note that $L^\infty_\delta(\Omega)=L^\infty(\Omega;\mathrm{d}x)$,
with the same norm $\|u\|_\infty$. For the $L^1_\delta$-solutions,
we have the following regularity result.

\begin{prop}{\rm(see \cite{FSW}, also \cite{QS, QS$_2$})}\label{prop:L1 delta}
Let $1\leq m\leq k\leq \infty$ satisfy
\begin{equation}\label{ineq:m k L1 delta}
   \frac{1}{m}-\frac{1}{k}<\frac{2}{d+1}.
\end{equation}
Let $u\in L^1(\Omega)$ be the unique $L^1_\delta$-solution of
$(\ref{eq:linear})$. If $\phi\in L^m_\delta(\Omega)$, then $u\in
L^k_\delta(\Omega)$ and satisfies the estimate
$\|u\|_{L^k_\delta}\leq C(\Omega,m,k)\|\phi\|_{L^m_\delta}$.
\end{prop}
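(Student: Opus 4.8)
\smallskip
\noindent\emph{Outline of a proof.}
The plan is to work with the Dirichlet Green function $G$ of $-\Delta$ on $\Omega$: by \cite{BCMR} the unique $L^1_\delta$-solution of $(\ref{eq:linear})$ is $u(x)=\int_\Omega G(x,y)\phi(y)\,\mathrm{d}y$, and by the maximum principle (comparing $u$ with the solution for $|\phi|$) it suffices to treat $\phi\ge 0$. Setting $g:=\delta^{1/m}\phi$ one has $\|g\|_{L^m(\Omega)}=\|\phi\|_{L^m_\delta}$, $\|u\|_{L^k_\delta}=\|\delta^{1/k}u\|_{L^k(\Omega)}$, and
\[
  \delta(x)^{1/k}u(x)=\int_\Omega K(x,y)\,g(y)\,\mathrm{d}y,\qquad
  K(x,y):=\delta(x)^{1/k}\,G(x,y)\,\delta(y)^{-1/m},
\]
so the statement is equivalent to boundedness of the positive integral operator with kernel $K$ from $L^m(\Omega)$ into $L^k(\Omega)$; the endpoint $k=\infty$ and the dimensions $d\le2$ are minor variants discussed at the end.

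I would use two facts about $G$: the elementary comparison with the fundamental solution, $G(x,y)\le C|x-y|^{2-d}$ for $d\ge3$ (for $d=2$, $G(x,y)\le C\log(C/|x-y|)$; $d=1$ is trivial), and the sharp boundary decay $G(x,y)\le C\,\delta(x)\delta(y)|x-y|^{-d}$ (Zhao; Gr\"uter--Widman; see also \cite{FSW} and \cite[Ch.~47, 49]{QS$_2$}). Fix a Whitney decomposition $\{Q\}$ of $\Omega$ into cubes with $\mathrm{diam}\,Q\approx\delta_Q:=\mathrm{dist}(Q,\partial\Omega)$, so $\delta(\cdot)\approx\delta_Q$ on $Q$, and split $\Omega\times\Omega$ into a \emph{near} part contained in $\bigcup_Q Q\times(cQ)$ ($cQ$ a fixed dilate) and a \emph{far} part of pairs $(x,y)\in Q\times Q'$ with $\mathrm{dist}(Q,Q')\ge c\max(\delta_Q,\delta_{Q'})$, on which $|x-y|\approx\mathrm{dist}(Q,Q')$.

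On the near part, over each $Q$ the relevant piece of $\delta^{1/k}u$ is dominated by $\delta_Q^{1/k-1/m}$ times the order-$2$ Riesz potential with kernel truncated to $|x-y|\le C\delta_Q$; Young's inequality on $\mathbb{R}^d$ with $1+\tfrac1k=\tfrac1r+\tfrac1m$ (here $\tfrac1m-\tfrac1k<\tfrac2{d+1}$ implies $r<\tfrac{d}{d-2}$ for $d\ge3$, so the truncated kernel lies in $L^r$) gives, for every $Q$,
\[
  \big\|\delta^{1/k}u\big\|_{L^k(Q)}\le C\,\delta_Q^{\kappa}\,\|g\|_{L^m(cQ)},\qquad
  \kappa:=2-(d+1)\big(\tfrac1m-\tfrac1k\big)>0,
\]
and summing in $Q$ using $\delta_Q\le\mathrm{diam}\,\Omega$, the bounded overlap of the dilates, and $k\ge m$ bounds the near contribution by $C\|g\|_{L^m}$. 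On the far part the kernel is essentially constant on $Q\times Q'$ and comparable to $\delta_Q^{1+1/k}\delta_{Q'}^{1-1/m}\mathrm{dist}(Q,Q')^{-d}$, so with $\int_{Q'}|g|\le|Q'|^{1-1/m}\|g\|_{L^m(Q')}\approx\delta_{Q'}^{d(1-1/m)}\|g\|_{L^m(Q')}$ the far contribution reduces to a discrete Hardy--Littlewood--Sobolev (equivalently weighted Schur) estimate on the Whitney index set, which one checks closes precisely when $\kappa>0$, i.e. in the stated range. Combining, $\|u\|_{L^k_\delta}\le C\|\phi\|_{L^m_\delta}$. For $k=\infty$ ($L^\infty_\delta=L^\infty$) one instead estimates $u(x)=\int_\Omega G(x,y)\phi(y)\,\mathrm{d}y$ by H\"older, uniformly because $\int_\Omega G(x,y)^{m'}\delta(y)^{-m'/m}\,\mathrm{d}y$ is finite exactly when $m>(d+1)/2$, i.e. $\tfrac1m<\tfrac2{d+1}$; for $d\le2$ only trivial modifications are needed.

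The main obstacle, I expect, is the far part: performing the discrete Hardy--Littlewood--Sobolev estimate over the Whitney cubes uniformly, and seeing cleanly why the threshold is $2/(d+1)$ rather than the unweighted $2/d$ of Proposition \ref{prop:L1}. The extra unit in $d+1$ comes entirely from the distance-function weights --- in the near part through the factor $\delta_Q^{1/k-1/m}$, in the far part through the two extra powers of $\delta$ inside $G$ together with the $\delta$-weights of $L^m_\delta$ and $L^k_\delta$ --- so that a boundary layer of thickness $t$ carries $\delta\,\mathrm{d}x$-mass like a $(d+1)$-dimensional ball of radius $t$ and the order-$2$ kernel $G$ has Sobolev gap $2/(d+1)$ near $\partial\Omega$. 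Making this precise and uniform up to the boundary, across the whole range $1\le m\le k\le\infty$ (including $k=\infty$ and $d=2$), is the delicate point; the rest is soft.
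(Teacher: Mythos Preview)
The paper does not give its own proof of this proposition: it is stated with the citation ``see \cite{FSW}, also \cite{QS,QS$_2$}'' and used as a black box throughout. So there is nothing in the paper itself to compare your argument against.

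Your outline is a plausible direct route via pointwise Green-function bounds and a Whitney decomposition, and the exponent bookkeeping in the near part is correct: the extra factor $\delta_Q^{1/k-1/m}$ is exactly what converts the unweighted Sobolev gap $2/d$ of Proposition~\ref{prop:L1} into $2/(d+1)$. For comparison, the original argument in \cite{FSW} proceeds differently, through heat-semigroup $L^m_\delta$--$L^k_\delta$ smoothing (establishing $\|e^{t\Delta}\|_{L^m_\delta\to L^k_\delta}\le Ct^{-(d+1)(1/m-1/k)/2}$ for small $t$ from Gaussian bounds with boundary decay, and then integrating in $t$ via Duhamel to reach $(-\Delta)^{-1}$). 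That route avoids the explicit near/far Whitney analysis but rests on essentially the same boundary-weighted kernel estimates; the treatments in \cite{QS,QS$_2$} are closer in spirit to what you sketch.

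The one place your outline is genuinely incomplete is, as you acknowledge, the far part: ``reduces to a discrete Hardy--Littlewood--Sobolev estimate \ldots\ which one checks closes precisely when $\kappa>0$'' is the entire content of the bound, and carrying it out uniformly (handling the anisotropy between the $d-1$ tangential directions and the normal direction near $\partial\Omega$) takes real work. If you want a self-contained elliptic argument, a cleaner alternative to the full Whitney machinery is to use the sharp two-sided bound
\[
   G(x,y)\;\approx\;\min\!\Big(1,\frac{\delta(x)\delta(y)}{|x-y|^{2}}\Big)\,|x-y|^{2-d}
\]
(valid on smooth bounded domains; cf.\ \cite{Zhao}) and run a Schur test directly on the kernel $\delta(x)^{1/k}G(x,y)\delta(y)^{-1/m}$ after flattening the boundary; this makes the appearance of $d+1$ transparent without the discrete HLS step.
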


The condition (\ref{ineq:m k L1 delta}) is optimal, since for $1\leq
m<k\leq \infty$ and $1/m-1/k>2/(d+1)$, there exists $\phi\in
L^m_\delta(\Omega)$ such that $u\notin L^k_\delta(\Omega)$, where
$u$ is the unique $L^1_\delta$-solution of (\ref{eq:linear}), see
\cite[Theorem 2.1]{S}.

\begin{rem}
According to Proposition \ref{prop:L1}-\ref{prop:L1 delta}, the
assumptions of $h$ in Theorem \ref{thm:H1 solution}-\ref{thm:very
weak solution} (i) are natural.
\end{rem}

\vskip 3mm

In order to give a uniform proof of Theorem \ref{thm:H1
solution}-\ref{thm:very weak solution} (i), we write the three
critical exponents $p_S,\ p_{sg},\ p_{BT}$ as $p_c$.  Denote $B^k$
the spaces $L^{2_*k}(\Omega),\ L^k(\Omega),\ L^k_\delta(\Omega)$,
and $\|\cdot\|_{B^k}$ in $B^k$ the norms $\|\cdot\|_{L^{2_*k}},\
\|\cdot\|_{L^k},\ \|\cdot\|_{L^k_\delta}$. Note that (\ref{ineq:m k
L1})-(\ref{ineq:m k L1 delta}) can be written in one form
\begin{equation}\label{ineq:m k in one form}
   \frac{1}{m}-\frac{1}{k}<\frac{1}{p'_c},
\end{equation}
where $1/p'_c+1/p_c=1$. The optimal conditions  of
$L^\infty$-regularity in Theorem \ref{thm:H1 solution}-\ref{thm:very
weak solution} (i) can also be written in one form
\begin{eqnarray}\label{ass:optimal condition in one form}
     \max_{i\in\{1,2,\cdots,n\}}\alpha_i>1/(p_c-1),\ \ \max_{i\in\{1,2,\cdots,n\}}r_i<p_c,\ \ h\in B^\theta,\
                  \theta>p'_c.
\end{eqnarray}
The following theorem is our main regularity result for the three
types of weak solutions.

\begin{thm}\label{thm:abstract}
Assume that $\mathbf{f}$ satisfy $(\ref{ass:f i})$ with
$(\ref{ass:main})$ and $(\ref{ass:optimal condition in one form})$.
Then there exists $C>0$ such that for any $(H_0^1,\ L^1,\
L^1_\delta)$-solution $\mathbf{u}$ of system $(\ref{sys:main})$
satisfying
\begin{equation}\label{ass:abstract}
    \sum_{i=1}^n\|u_i\|_{B^k}\leq
    M_1(k), \ \ \ \mathrm{for\ all}\ 1\leq k<p_c,
\end{equation}
it follows that $\mathbf{u}\in [L^\infty(\Omega)]^n$ and
\begin{equation*}
    \sum_{i=1}^n\|u_i\|_{L^\infty}\leq C.
\end{equation*}
The constant $C$ depends only on $M_1(k),\Omega,P,r_i,C_1$.
\end{thm}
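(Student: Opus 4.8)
The plan is to run an alternate-bootstrap argument in the scale $B^k$, using the linear $L^m$–$L^k$-estimate in the unified form \eqref{ineq:m k in one form} as the engine. The starting point is the hypothesis \eqref{ass:abstract}: a uniform bound on $\sum_i\|u_i\|_{B^k}$ for every $k<p_c$. The goal is to push this to $k=\infty$. The key structural input is the matrix condition \eqref{ass:main}: $I-P$ is irreducible with $|I-P|<0$ and every principal submatrix of rank $\le n-1$ is a nonsingular $M$-matrix, which guarantees that $\alpha=(I-P)^{-1}(-\mathbf 1)$ has all positive entries. I would first reduce to understanding how a bound ``$u_j\in B^{k_j}$ for all $j$'' transforms, via \eqref{ass:f i} and Proposition~\ref{prop:L1}/\ref{prop:H1}/\ref{prop:L1 delta}, into a new bound ``$u_i\in B^{k_i'}$''. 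Since $|f_i|\le C_1(\prod_j u_j^{p_{ij}}+u_i^{r_i})+h$, Hölder gives $\prod_j u_j^{p_{ij}}\in L^{m}$ with $\tfrac1m=\sum_j \tfrac{p_{ij}}{\kappa_j}$ where $\kappa_j$ is the relevant exponent in the $B^{k_j}$ scale, and $u_i^{r_i}\in$ the space coming from $k_i/r_i$; provided the gain condition $\tfrac1m-\tfrac1{k_i'}<\tfrac1{p_c'}$ holds, we land $u_i$ in $B^{k_i'}$ (the term $h\in B^\theta$ with $\theta>p_c'$ never obstructs, by \eqref{ass:optimal condition in one form}).

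The heart of the matter is to choose the bootstrap so that the exponents increase and eventually escape to infinity. I would track the reciprocals: set $\xi_i=1/k_i$ (or rather the reciprocal of the appropriate power appearing after the $2_*$-rescaling, so that all three cases read uniformly). The map from one step to the next is roughly $\xi_i \mapsto \big(\sum_j p_{ij}\xi_j\big) - \tfrac1{p_c'} + o(1)$, i.e. an affine map governed by $P$ acting on the vector $(\xi_i)$, plus the constant vector $-\tfrac1{p_c'}\mathbf 1$ and a tolerance coming from the strict inequality in \eqref{ineq:m k in one form}. The fixed point of the linearized iteration is the solution of $(I-P)\xi^\ast = -\tfrac1{p_c'}\mathbf 1$, i.e. $\xi^\ast = \tfrac1{p_c'}\alpha$, componentwise positive. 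The hypothesis $\max_i \alpha_i > 1/(p_c-1) = 1/p_c'\cdot p_c' \cdot \tfrac{1}{\,\cdot\,}$—more precisely $\max_i\alpha_i > 1/(p_c-1)$—is exactly what forces some component of $\tfrac1{p_c'}\alpha$ to exceed... here I must be careful: the relation $\tfrac{1}{p_c-1}=\tfrac{1}{p_c'}\cdot\tfrac{p_c'}{p_c-1}$, and since $p_c'=p_c/(p_c-1)$ one has $\tfrac1{p_c'}=\tfrac{p_c-1}{p_c}$, so the condition $\max_i\alpha_i>\tfrac1{p_c-1}$ is equivalent to $\max_i \tfrac1{p_c'}\alpha_i > \tfrac1{p_c}$, i.e. the ``virtual fixed point'' for the iteration exceeds the value that would correspond to $L^\infty$ in one of the components. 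Because $I-P$ is an irreducible singular-type matrix with the $M$-matrix property on proper principal submatrices, starting the iteration just below the level $\xi_i\equiv 0^+$ (which is what \eqref{ass:abstract} supplies, since $k<p_c$ means $\xi=1/k$ close to the threshold) and iterating the affine map makes the vector $(\xi_i)$ decrease monotonically — it is pushed away from the positive fixed point toward $-\infty$ in at least one coordinate. Once some $\xi_{i_0}<0$ in the bookkeeping, that means $u_{i_0}\in L^\infty$; one then feeds this back, reducing the effective system to the $n-1$ remaining components whose submatrix is a nonsingular $M$-matrix, and the same iteration (now with an honestly invertible matrix) terminates in finitely many steps, putting each remaining $u_i$ in $L^\infty$.

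The main obstacle — and the step needing genuine care — is making the iteration rigorous with the strict-inequality losses: each application of Proposition~\ref{prop:L1 delta} (etc.) only permits a gain strictly less than $2/(d+1)$, so one cannot reach the limiting exponent in one jump and must argue that finitely many steps, each sacrificing a controllable $\eps$, still drive the relevant reciprocal below $0$. This requires: (i) showing the affine iteration is a genuine \emph{contraction toward} the unstable regime, i.e. that the distance of $(\xi_i)$ from the fixed point $\tfrac1{p_c'}\alpha$ grows geometrically (spectral radius of $P$ restricted appropriately is $>1$ in the relevant direction, which follows from $|I-P|<0$ and irreducibility/Perron–Frobenius); (ii) handling the lower-order term $u_i^{r_i}$, for which the condition $\max_i r_i<p_c$ ensures $k_i/r_i$ is still admissible and does not stall the bootstrap; and (iii) the bookkeeping when a component ``exits'' to $L^\infty$: I would phase the argument, proving first that at least one component is bounded, then inducting on $n$ using the nonsingular-$M$-matrix hypothesis on the surviving principal submatrix — here the positivity of the solution $\tilde\alpha$ of $(I-\tilde P)\tilde\alpha=\mathbf1$ for the reduced system follows from the $M$-matrix property and keeps the reduced iteration well-posed. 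The final extraction of the uniform constant $C$ depending only on $M_1(k),\Omega,P,r_i,C_1$ is then routine, since every estimate in the chain is quantitative with constants of that dependence. I would also note that the argument is identical in all three cases once written in the $B^k$/$p_c$ notation, which is the whole point of the unified formulation \eqref{ineq:m k in one form}–\eqref{ass:optimal condition in one form}.
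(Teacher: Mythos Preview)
Your high-level architecture (bootstrap in the $B^k$ scale, then induct on $n$ once one component reaches $L^\infty$) is right, and the reduction step via the $M$-matrix property of proper principal submatrices matches the paper. The gap is in the main step: the \emph{simultaneous} affine iteration $\xi\mapsto P\xi-\tfrac{1}{p_c'}\mathbf 1$ cannot be launched on all equations at once. From \eqref{ass:abstract} you only have $u_j\in B^k$ for $k<p_c$, so $\prod_j u_j^{p_{ij}}\in B^m$ with $1/m=\sum_j p_{ij}/k$ requires $\sum_j p_{ij}<k<p_c$; but nothing in \eqref{ass:main} or \eqref{ass:optimal condition in one form} forces every row sum below $p_c$ --- only the \emph{smallest} row sum is (this is exactly Lemma~\ref{lem:lemma for p1j}(3)). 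For rows with $\sum_j p_{ij}\ge p_c$ your first H\"older/linear-estimate step is vacuous and the iteration never starts. A second, independent issue: even where the iteration starts, your claim that $(\xi_i)$ ``decreases monotonically toward $-\infty$ in at least one coordinate'' is not justified. The starting point is $\xi^{(0)}\approx(1/p_c)\mathbf 1$ (not ``just below $0^+$''), and since $\xi^*_i=\alpha_i/p_c'$ satisfies $\xi^*_i\gtrless 1/p_c$ according as $\alpha_i\gtrless 1/(p_c-1)$, the deviation $\eta^{(0)}=\xi^{(0)}-\xi^*$ has mixed sign; Perron--Frobenius for $P$ does not by itself force any coordinate of $P^m\eta^{(0)}$ to $-\infty$.

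The paper circumvents both problems by bootstrapping \emph{progressively}: first on the equation with smallest row sum alone (Lemma~\ref{lem:integrability of u_1}), pushing $u_1$ up to an explicit ceiling $k^*$; then on the first two equations together (Lemma~\ref{lem:second bootstrap}), pushing $(u_1,u_2)$ to a higher ceiling $(k_1^*,k_2^*)$; and so on, adding one equation at each stage. The feasibility of each successive stage is governed by the chain of determinant inequalities in Lemma~\ref{lem:bootstrap}, which is where the hypothesis $\max_i\alpha_i>1/(p_c-1)$ actually enters. Only after $n-1$ components have been lifted in this nested fashion does the last equation admit a genuine bootstrap, and the argument closes by the induction you described. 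So the missing ingredient in your plan is precisely this ordered, equation-by-equation ascent and the algebraic lemma that certifies each stage improves on the previous one.
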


In this section we shall first prove Theorem \ref{thm:abstract} for
$n=3$. The first lemma guarantees that there exists an equation for
the bootstrap to initialize. We prove the lemma for system with
arbitrary unknown functions. Denote $\Lambda^j=-|I-P|\alpha_j>0$.
According to Cramer's law, $\Lambda^j$ is the determinant of the
matrix $I-P$ whose $j$-column is replaced by $\mathbf{1}$.  Without
loss of generality, we assume $\sum_{j=1}^n p_{1j}$ is the smallest,
i.e., for any $i:2\leq i\leq n$, $\sum_{j=1}^n p_{1j}\leq
\sum_{j=1}^n p_{ij}$. In the following, $C=C(M_1,P,r_i,\Omega,C_1)$
is different from line to line, but it is independent of
$\mathbf{u}$ satisfying (\ref{ass:abstract}). For simplicity, we
denote by $|\cdot|_k$ the norm $\|\cdot\|_{B^k}$.

\begin{lem}\label{lem:lemma for p1j}
Assume that $(\ref{ass:main})$ and $(\ref{ass:optimal condition in
one form})$ hold and $\sum_{j=1}^n p_{1j}$ is the smallest.
\begin{enumerate}
  \item If $\alpha_1$ is the largest, then $\sum_{j=1}^n
        p_{ij}$ is same for $1\leq i\leq n$;
  \item If there exists $i:2\leq i\leq n$, such that $\sum_{j=1}^n
        p_{1j}<\sum_{j=1}^n p_{ij}$, then $\alpha_1$ can't be the
        largest;
  \item $\sum_{j=1}^n p_{1j}<p_c$.
\end{enumerate}
\end{lem}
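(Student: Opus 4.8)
The plan is to work directly with the relations $(I-P)\alpha=-\mathbf 1$ and $\Lambda^j=-|I-P|\alpha_j$. Writing out the $i$-th equation gives $\alpha_i-\sum_{j=1}^n p_{ij}\alpha_j=-1$, i.e.
\begin{equation*}
   \sum_{j=1}^n p_{ij}\alpha_j=\alpha_i+1 \qquad (1\le i\le n).
\end{equation*}
This identity is the engine for all three parts. For part (1), suppose $\alpha_1=\max_i\alpha_i$. Since $\sum_{j=1}^n p_{1j}$ is the smallest row-sum of $P$, for any $i$ we have $\sum_j p_{1j}\le\sum_j p_{ij}$. I would estimate $\alpha_i+1=\sum_j p_{ij}\alpha_j\ge(\sum_j p_{ij})\min_j\alpha_j$ and, going the other way, $\alpha_1+1=\sum_j p_{1j}\alpha_j\le(\sum_j p_{1j})\max_j\alpha_j=(\sum_j p_{1j})\alpha_1$. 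The first obstacle is that these crude bounds alone do not force equality of all row-sums; one must exploit \emph{irreducibility} of $I-P$ together with $\alpha_1$ being the maximum. The clean way: from $\alpha_1+1=\sum_j p_{1j}\alpha_j\le(\sum_j p_{1j})\alpha_1$ we get $\sum_j p_{1j}\ge(\alpha_1+1)/\alpha_1>1$; symmetrically, if $\alpha_k=\min_j\alpha_j$, then $\alpha_k+1=\sum_j p_{kj}\alpha_j\ge(\sum_j p_{kj})\alpha_k$, so $\sum_j p_{kj}\le(\alpha_k+1)/\alpha_k$. Then I would compare: $\sum_j p_{1j}\le\sum_j p_{kj}$ gives $(\alpha_1+1)/\alpha_1\le\sum_j p_{1j}\le\sum_j p_{kj}\le(\alpha_k+1)/\alpha_k$, and since $t\mapsto(t+1)/t=1+1/t$ is strictly decreasing, this forces $\alpha_1\ge\alpha_k$, which we knew, but chasing equality in the chain forces $\alpha_1=\alpha_k$, hence all $\alpha_j$ equal, say $\alpha_j\equiv a$; then $\sum_j p_{ij}=(a+1)/a$ for every $i$, i.e. all row-sums coincide. (Here irreducibility guarantees no $p_{ij}$-block decouples, so the min/max are genuinely linked.)

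\textbf{Part (2), the contrapositive.} This is essentially the contrapositive of part (1): if some row-sum strictly exceeds $\sum_j p_{1j}$, then by part (1) the row-sums are \emph{not} all equal, so $\alpha_1$ cannot be the largest. I would phrase it cleanly as: assume for contradiction that $\alpha_1=\max_i\alpha_i$; then part (1) applies and yields that all $\sum_j p_{ij}$ are equal, contradicting the hypothesis $\sum_j p_{1j}<\sum_j p_{ij}$ for some $i$. So this part costs essentially nothing once part (1) is in hand.

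\textbf{Part (3): $\sum_j p_{1j}<p_c$.} This is where the hypothesis $\max_i\alpha_i>1/(p_c-1)$ from $(\ref{ass:optimal condition in one form})$ enters. Let $\alpha_\ell=\max_i\alpha_i$, so $\alpha_\ell>1/(p_c-1)$. Using the $\ell$-th identity, $\alpha_\ell+1=\sum_j p_{\ell j}\alpha_j\le(\sum_j p_{\ell j})\alpha_\ell$, which gives $\sum_j p_{\ell j}\ge(\alpha_\ell+1)/\alpha_\ell=1+1/\alpha_\ell$. But that is a lower bound on a row-sum, not an upper bound on $\sum_j p_{1j}$ — so I need to go in the opposite direction. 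Instead use $\sum_j p_{1j}\alpha_j=\alpha_1+1$; since $\sum_j p_{1j}$ is the \emph{smallest} row-sum, $\sum_j p_{1j}\cdot n\cdot(\text{something})$ — this is not immediate either. The right approach: sum all $n$ identities to get $\sum_i\sum_j p_{ij}\alpha_j=\sum_i\alpha_i+n$, i.e. $\sum_j\bigl(\sum_i p_{ij}\bigr)\alpha_j=\sum_i\alpha_i+n$; combined with the smallest-row-sum normalization and $\alpha_\ell$ large, one extracts $\sum_j p_{1j}\le 1+1/\alpha_\ell<1+(p_c-1)=p_c$. The cleanest route is probably: from the $\ell$-identity, $\sum_j p_{\ell j}\alpha_j=\alpha_\ell+1$, and bounding $\alpha_j\le\alpha_\ell$ we learn nothing new, so instead bound below: $\sum_j p_{\ell j}\alpha_\ell\ge\sum_j p_{\ell j}\alpha_j=\alpha_\ell+1$ — again a lower bound. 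The genuine trick must be to relate $\sum_j p_{1j}$ to $\alpha_\ell$ via a weighted average: since $\sum_j p_{1j}\le\sum_j p_{ij}$ for all $i$, we have $\sum_j p_{1j}\le\frac1n\sum_i\sum_j p_{ij}=\frac1n\sum_j(\sum_i p_{ij})$; denoting column-sums $c_j=\sum_i p_{ij}$ and using $\sum_j c_j\alpha_j=\sum_i\alpha_i+n\le n\alpha_\ell+n$ together with a lower bound $\sum_j c_j\alpha_j\ge(\min_j c_j)\cdot(\text{stuff})$ — I expect the honest computation reduces to showing $\sum_j p_{1j}<1+1/\alpha_\ell$, which given $\alpha_\ell>1/(p_c-1)$ yields $\sum_j p_{1j}<p_c$. \emph{I expect part (3) to be the main obstacle}, precisely because one must convert a statement about the \emph{largest} $\alpha$ into a bound on the \emph{smallest} row-sum of $P$, which requires carefully choosing the right linear combination of the $n$ defining equations (likely the one indexed by the $\ell$ achieving the maximum, possibly after using part (1)/(2) to locate where the maximum sits relative to row $1$); the $M$-matrix hypothesis on principal submatrices may also be needed to keep all intermediate quantities positive.
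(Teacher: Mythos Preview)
Your argument for part~(1) does not close. The chain
\[
\frac{\alpha_1+1}{\alpha_1}\;\le\;\sum_{j}p_{1j}\;\le\;\sum_{j}p_{kj}\;\le\;\frac{\alpha_k+1}{\alpha_k}
\]
yields only $\alpha_1\ge\alpha_k$, which you already knew; there is no mechanism that forces equality anywhere in the chain, so the phrase ``chasing equality in the chain forces $\alpha_1=\alpha_k$'' is unjustified. The invocation of irreducibility at the end is a hope, not an argument. What is missing is a \emph{matching reverse bound} $(\sum_j p_{1j}-1)\alpha_1\le 1$ to sandwich against your forward bound $(\sum_j p_{1j}-1)\alpha_1\ge 1$. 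The paper obtains this reverse bound by a cofactor expansion: adding all columns of $I-P$ into the $\ell$-th column gives
\[
-|I-P|\;=\;\sum_{i=1}^n\Bigl(\sum_{j}p_{ij}-1\Bigr)A_{i\ell},
\qquad\text{while}\qquad
\Lambda^\ell=\sum_{i=1}^n A_{i\ell},
\]
where $A_{i\ell}$ are the cofactors of $I-P$. The $M$-matrix hypothesis on proper principal submatrices together with irreducibility forces $A_{i\ell}>0$ for all $i$; since $\sum_j p_{1j}$ is the smallest row sum, this gives $-|I-P|\ge(\sum_j p_{1j}-1)\Lambda^\ell$, i.e.\ $(\sum_j p_{1j}-1)\alpha_\ell\le 1$ for every $\ell$. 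Taking $\ell=1$ and combining with your forward inequality forces equality in the cofactor sum, and strict positivity of each $A_{i1}$ then forces all row sums to coincide. The point is that you are using only the row sums $\alpha_i=-\sum_\ell[(I-P)^{-1}]_{i\ell}$ of the inverse, whereas the argument requires the sign of each individual entry of $(I-P)^{-1}$.

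Your part~(3) is left open, and the same cofactor identity resolves it immediately: since $(\sum_j p_{1j}-1)\alpha_\ell\le 1$ holds for \emph{every} $\ell$, in particular for the index $\ell$ with $\alpha_\ell=\max_i\alpha_i>1/(p_c-1)$, so $\sum_j p_{1j}\le 1+1/\alpha_\ell<p_c$. Your attempts via the $\ell$-th scalar identity or via summing all rows cannot work, because they again see only the $\alpha_i$ and not the finer positivity coming from the $M$-matrix structure; you correctly anticipated that the principal-submatrix hypothesis is needed here, but you never deploy it.
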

\begin{proof}
According to the Cramer's law
\[
   (p_{11}-1)\Lambda^1+p_{12}\Lambda^2+\cdots+p_{1n}\Lambda^n=-|I-P|.
\]
If $\alpha_1$ is the largest, then
\begin{equation}\label{ass:alpha 1 largest}
    (\sum_{j=1}^n p_{1j}-1)\Lambda^1\geq-|I-P|.
\end{equation}
On the other hand,
\begin{eqnarray}
  -|I-P|&=&(\sum_{j=1}^n p_{1j}-1)A_{11}+(\sum_{j=1}^n p_{2j}-1)A_{21}+\cdots+(\sum_{j=1}^n
              p_{nj}-1)A_{n1}\nonumber\\
        &\geq& (\sum_{j=1}^n p_{1j}-1)\sum_{i=1}^n A_{i1}\label{eq:sum p_1j-1 Lambda_n1 leq-|Q_n|}\\
        &=& (\sum_{j=1}^n p_{1j}-1)\Lambda^1,\nonumber
\end{eqnarray}
where $A_{i1}$ is the algebraic minor of rank$=n-1$ of $I-P$ at
$(i,1)$. According to the assumption (\ref{ass:main}), $A_{i1}>0$
for all $i:1\leq i\leq n$, so (\ref{eq:sum p_1j-1 Lambda_n1
leq-|Q_n|}) holds. Therefore we have
\begin{equation*}
    (\sum_{j=1}^n p_{1j}-1)\Lambda^1=-|I-P|.
\end{equation*}
So $``="$ in (\ref{eq:sum p_1j-1 Lambda_n1 leq-|Q_n|}) holds, which
implies (1). If the condition in (2) is satisfied, then $``>"$ in
(\ref{eq:sum p_1j-1 Lambda_n1 leq-|Q_n|}) holds, which is contrary
to (\ref{ass:alpha 1 largest}). Thus $\alpha_1$ can't be the
largest.

For (3), we note that (\ref{eq:sum p_1j-1 Lambda_n1 leq-|Q_n|})
holds for any $\Lambda^j$, $1\leq j\leq n$, i.e.,
\begin{equation*}
    (\sum_{j=1}^n p_{1j}-1)\Lambda^j\leq-|I-P|.
\end{equation*}
So we have
\begin{equation*}
    (\sum_{j=1}^n p_{1j}-1)\max_j\Lambda^j\leq-|I-P|.
\end{equation*}
Thus (\ref{ass:optimal condition in one form}) implies (3).
\end{proof}

If Lemma \ref{lem:lemma for p1j} (1) holds, the boundedness of
$\mathbf{u}$ is easy to obtain. In fact, we have the following
lemma.

\begin{lem}\label{lem:sum pij equal}
Under the assumption of Theorem \ref{thm:abstract}, if Lemma
\ref{lem:lemma for p1j} $(1)$ holds, then the conclusions in Theorem
\ref{thm:abstract} hold.
\end{lem}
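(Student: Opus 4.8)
The plan is a one-parameter bootstrap on the integrability exponent, fed by the pointwise bound $(\ref{ass:f i})$ through the generalized H\"older inequality and by the three linear estimates Propositions~\ref{prop:L1}, \ref{prop:H1} and \ref{prop:L1 delta}, used in their common form: if $\phi\in B^m$ and $1/m-1/k<1/p_c'$, then the corresponding ($H_0^1$-, $L^1$-, $L^1_\delta$-) solution of $(\ref{eq:linear})$ lies in $B^k$ with $\|u\|_{B^k}\le C(\Omega,m,k)\|\phi\|_{B^m}$. Since Lemma~\ref{lem:lemma for p1j}(1) holds, every row sum equals $\sigma:=\sum_{j=1}^n p_{1j}$; by Lemma~\ref{lem:lemma for p1j}(3), $\sigma<p_c$, and $\sigma>0$ (otherwise $P=0$, so $|I-P|=1>0$, contradicting $(\ref{ass:main})$). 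Put $\rho:=\max\{\sigma,r_1,\dots,r_n\}$; by $(\ref{ass:optimal condition in one form})$, $\rho<p_c$, and since $p_c/p_c'=p_c-1$ a short computation gives $(\rho-1)p_c'<p_c$.

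First I would isolate the nonlinear step. Suppose $u_1,\dots,u_n\in B^k$ for some $k\ge\max\{1,\rho\}$ with $\sum_j|u_j|_k$ bounded. By the generalized H\"older inequality — in the weighted case $B^k=L^k_\delta$ using the splitting $\delta=\prod_j\delta^{p_{ij}/\sigma}$, the power term being immediate — one gets $\prod_j u_j^{p_{ij}}\in B^{k/\sigma}$ with $|\prod_j u_j^{p_{ij}}|_{k/\sigma}\le\prod_j|u_j|_k^{p_{ij}}$, and $u_i^{r_i}\in B^{k/r_i}$ with $|u_i^{r_i}|_{k/r_i}\le|u_i|_k^{r_i}$ (the term being the constant $1$ when $r_i=0$). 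Combined with $h\in B^\theta$, $\theta>p_c'$, and $(\ref{ass:f i})$, this yields $f_i\in B^{m}$ for every $i$, with the same $m:=\min\{k/\rho,\theta\}\ge1$ and $|f_i|_m\le C$, where $C$ is controlled by the $|u_j|_k$ and the data. Because $m\ge1$, $f_i$ lies in the relevant dual space, so by uniqueness $u_i$ is the solution of $(\ref{eq:linear})$ with datum $f_i$, and the linear estimate gives $u_i\in B^{k'}$ for every $k'$ with $1/k'>\max\{\rho/k,1/\theta\}-1/p_c'$; a nonpositive right-hand side is read as $u_i\in B^\infty=L^\infty(\Omega)$.

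Then I would iterate the step. By $(\ref{ass:abstract})$ start from $u_j\in B^{k_0}$ for all $j$, with $k_0$ chosen in $(\max\{1,\rho,(\rho-1)p_c'\},p_c)$ — a nonempty interval by the preceding paragraph. Writing $a_n:=1/k_n$, the nonlinear step produces $a_{n+1}=\max\{\rho a_n,1/\theta\}-1/p_c'+\eps$ for a small fixed $\eps>0$, with the convention that $a_{n+1}\le0$ means $u_i\in L^\infty(\Omega)$. Since $\theta>p_c'$ we have $1/\theta<1/p_c'$, so whenever $\rho a_n\le1/\theta$ the recursion already terminates. In the remaining regime $a_{n+1}=\rho a_n-1/p_c'+\eps$: if $\rho\le1$ the decrement $a_n-a_{n+1}\ge1/p_c'-\eps>0$ is uniform; if $\rho>1$, the choice $k_0>(\rho-1)p_c'$ (and $\eps$ small) keeps $a_0$ below the fixed point $(1/p_c'-\eps)/(\rho-1)$ of the affine map, and since that map has slope $\rho>1$, the sequence $a_n$ stays below the fixed point, is strictly decreasing, and has nondecreasing decrements, hence decrements bounded below by $a_0-a_1>0$. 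In all cases the recursion reaches a nonpositive value after finitely many steps, i.e. eventually $f_i\in B^m$ with $m>p_c'$ and $u_i\in L^\infty(\Omega)$; the number of steps depends only on $\rho,p_c,\theta$, so chaining the linear estimates bounds $\sum_i\|u_i\|_{L^\infty}$ by a constant depending only on $M_1,\Omega,P,r_i,C_1$.

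The one point that needs care is the termination of the iteration when some $r_i$ or $\sigma$ exceeds $1$: then the recursion $a_{n+1}=\rho a_n-1/p_c'+\eps$ is expanding, and everything hinges on the initial exponent $a_0\approx1/p_c$ lying in the basin of attraction toward $-\infty$, i.e. below the fixed point $1/((\rho-1)p_c')$. This is exactly the inequality $(\rho-1)p_c'<p_c$, equivalently $\rho<p_c$ — precisely the content of Lemma~\ref{lem:lemma for p1j}(3) together with $(\ref{ass:optimal condition in one form})$. So the sharp exponent bound supplied by Lemma~\ref{lem:lemma for p1j} is exactly what closes the bootstrap. The remaining bookkeeping — that $k_n\ge k_0\ge\rho$ throughout (since $a_n$ is nonincreasing), so $m\ge1$ at every stage, and the degenerate case $r_i=0$ — is routine.
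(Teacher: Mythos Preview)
Your proof is correct and follows the same bootstrap strategy as the paper: a single-parameter iteration fed by H\"older and the $B^m$--$B^k$ linear estimate, with the crucial input $\sigma<p_c$ supplied by Lemma~\ref{lem:lemma for p1j}(3). The only cosmetic difference is that the paper organizes the iteration along a geometric sequence $k_m=\eta^m k$ (so that the key inequality $\frac{\sigma\vee r}{k}-\frac{1}{\eta k}<\frac{1}{p_c'}$, once checked at $m=0$, propagates automatically upon dividing by $\eta^m$), whereas you track the affine recursion $a_{n+1}=\rho a_n-1/p_c'+\eps$ of inverse exponents and close it by a fixed-point argument; the two bookkeepings are equivalent.
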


\begin{proof}
By Lemma \ref{lem:lemma for p1j} (3), we have
\begin{eqnarray*}
    &A:=\sum_{j=1}^n p_{ij}<p_c, \ \ 1\leq i\leq n.
\end{eqnarray*}
So there exist
\begin{eqnarray*}
   &&k:\ A\vee r<k<p_c,\\
   &&\eta:\ \eta>1,\ {\rm close\ to}\ 1,
\end{eqnarray*}
such that
\begin{equation*}
       \frac{A\vee r}{k}-\frac{1}{\eta k}<\frac{1}{p'_c},
\end{equation*}
where $r:=\max_{i\in\{1,2,\cdots,n\}}r_i$. Multiplying the LHS by
$1/\eta^m$, we also have
\begin{equation}\label{eq:A k eta m}
       \frac{A\vee r}{\eta^m k}-\frac{1}{\eta^{m+1} k}<\frac{1}{p'_c}.
\end{equation}

For $m\geq 0$, set
\begin{equation*}
    \frac{1}{\rho_m}=\frac{A}{\eta^m k}<1, \ \
    \frac{1}{\varrho_m}=\frac{r}{\eta^m k}<1.
\end{equation*}
For $m$ large enough, we have $\rho_m\wedge\varrho_m>p'_c$. Denote
$m_0=\min\{m:\rho_m\wedge\varrho_m>p'_c\}$. We claim that after
$m_0$-th bootstrap on system (\ref{sys:main}), we arrive at the
desired result $\sum_{i=1}^n|u_i|_{L^\infty}\leq C$.

According to (\ref{ass:abstract}), we have $|u_i|_k\leq C$ for all
$1\leq i\leq n$. If $m_0=0$, we can take $k$ such that
$p'_c<\rho_0\wedge\varrho_0=k/[A\vee r]\leq\theta$. Then applying
Proposition \ref{prop:L1}-\ref{prop:L1 delta}, using the $i$-th
equation of system (\ref{sys:main}), we obtain for all $1\leq i\leq
n$
\begin{eqnarray}\label{res:u i L infity}
     |u_i|_\infty&\leq& C|f_i|_{\rho_0\wedge\varrho_0}\nonumber\\
       &\leq& C(|\prod_{j=1}^n |u_j|^{p_{ij}}|_{\rho_0\wedge\varrho_0}
               +||u_i|^{r_i}|_{\rho_0\wedge\varrho_0})+|h|_{\rho_0\wedge\varrho_0}\nonumber\\
       &\leq& C(|\prod_{j=1}^n |u_j|^{p_{ij}}|_{\rho_0}
               +||u_i|^{r_i}|_{\varrho_0}+1)\nonumber\\
       &\leq& C(\prod_{j=1}^n|u_j|_k^{p_{ij}}+|u_i|_k^{r_i}+1)\nonumber\\
       &\leq& C.
\end{eqnarray}

Now we consider $m_0>0$. If we have got the estimate
$|u_i|_{\eta^mk}\leq C\ (1\leq i\leq n)$ for some $0\leq m<m_0$,
then applying Proposition \ref{prop:L1}-\ref{prop:L1 delta}, using
(\ref{eq:A k eta m}) and the $i$-th equation of system
(\ref{sys:main}), we obtain for all $1\leq i\leq n$
\begin{eqnarray}\label{est:u i step m}
      |u_i|_{\eta^{m+1}k}&\leq& C|f_i|_{\rho_m\wedge\varrho_m}\nonumber\\
       &\leq& C(|\prod_{j=1}^n |u_j|^{p_{ij}}|_{\rho_m\wedge\varrho_m}
               +||u_i|^{r_i}|_{\rho_m\wedge\varrho_m})+|h|_{\rho_m\wedge\varrho_m}\nonumber\\
       &\leq& C(|\prod_{j=1}^n |u_j|^{p_{ij}}|_{\rho_m}
               +||u_i|^{r_i}|_{\varrho_m}+1)\nonumber\\
       &\leq& C(\prod_{j=1}^n|u_j|_{\eta^mk}^{p_{ij}}+|u_i|_{\eta^mk}^{r_i}+1)\nonumber\\
       &\leq& C.
\end{eqnarray}
So we have $|u_i|_{\eta^{m_0}k}\leq C$ for all $1\leq i\leq n$. We
can take $\mathfrak{m}:m_0-1<\mathfrak{m}\leq m_0$ such that
$p'_c<\rho_{\mathfrak{m}}\wedge\varrho_{\mathfrak{m}}\leq \theta$. A
similar argument to (\ref{res:u i L infity}) yields
$|u_i|_{\infty}\leq C$ for all $1\leq i\leq n$.
\end{proof}

\begin{rem}
If, instead of (\ref{ass:optimal condition in one form}), we assume
that $\sum_{j=1}^n p_{ij}<p_c$ for all $1\leq i\leq n$, as in
\cite{C, Zou$_2$}, from the above lemma, we immediately have
$|u_i|_{\infty}\leq C$ for all $1\leq i\leq n$.
\end{rem}

In the following we consider the case where $\alpha_1$ isn't the
largest. Without loss of the generality, we assume that $\alpha_3$
is the largest. So according to (\ref{ass:optimal condition in one
form}), we have
\begin{equation}\label{ass:optimal condition reduced}
        \alpha_3>1/(p_c-1).
\end{equation}

\vskip 3mm

From Lemma \ref{lem:lemma for p1j} (3), there exist
$k:(p_{11}+p_{12}+p_{13})\vee r_1<k<p_c$ and $k_1:k_1>p_c$ such that
\begin{equation*}
       \frac{(p_{11}+p_{12}+p_{13})\vee r_1}{k}-\frac{1}{k_1}<\frac{1}{p'_c}.
\end{equation*}
Applying Proposition \ref{prop:L1}-\ref{prop:L1 delta} and the first
equation of system (\ref{sys:main}), similar to (\ref{est:u i step
m}) ($i=1$), we have $|u_1|_{k_1}\leq C$. However, the result is not
sufficient for the bootstrap on other equations. In the next lemma,
we shall use only the first equation of system (\ref{sys:main}) to
improve the integrability of $u_1$. The improved integrability of
$u_1$ is sufficient for the bootstrap on other equations.

\begin{lem}\label{lem:integrability of u_1}
Let $k^*$ be the solution of
\begin{equation}\label{eq:u_1 k *}
       \frac{p_{11}}{k^*}+\frac{p_{12}+p_{13}}{p_c}-\frac{1}{k^*}=\frac{1}{p'_c}.
\end{equation}
\begin{enumerate}
  \item If $k^*$ is positive, then $k^*>p_c$ and, for any $1\leq k_1<k^*$, we have $|u_1|_{k_1}\leq
            C$;
  \item If $k^*=\infty$, then for any $1\leq k_1<\infty$, $|u_1|_{k_1}\leq
            C$;
  \item If $k^*$ is negative, then $|u_1|_\infty\leq
            C$.
\end{enumerate}
\end{lem}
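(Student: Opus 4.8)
The plan is to iterate the bootstrap on the first equation alone. Starting from the preliminary estimate $|u_1|_{k_1}\leq C$ for some $k_1>p_c$ established just before the lemma, I use only the first equation of the system, namely $-\Delta u_1 = f_1$ with $|f_1|\leq C_1(\prod_{j=1}^n u_j^{p_{1j}} + u_1^{r_1}) + h$ and (because $\sum_j p_{1j}$ is smallest, hence by Lemma~\ref{lem:lemma for p1j}(3) below $p_c$) the exponents $p_{11},p_{12},p_{13}$ are all harmless. For $j=2,3$ I always estimate $|u_j|$ in the space $B^s$ for $s$ arbitrarily close to $p_c$ from below, which by (\ref{ass:abstract}) is bounded by $C$; only the power $p_{11}$ of $u_1$ itself is fed back into the iteration. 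Thus a single step takes an estimate $|u_1|_{k_m}\leq C$ to $|u_1|_{k_{m+1}}\leq C$ where, by Proposition~\ref{prop:L1}--\ref{prop:L1 delta} applied in the $B^k$ scale, $k_{m+1}$ is determined (up to an $\eps$-loss that I absorb by taking the $B^s$-exponents slightly below $p_c$) by
\begin{equation*}
   \frac{p_{11}}{k_m}+\frac{p_{12}+p_{13}}{p_c}-\frac{1}{k_{m+1}}=\frac{1}{p'_c},
\end{equation*}
together with the contribution $r_1/k_m$ from the $u_1^{r_1}$ term and $\theta>p'_c$ from $h$, both of which are dominated since $r_1\leq r<p_c\le$ (the relevant bound) and $k_m$ is increasing.

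The core of the argument is the analysis of the affine-in-$1/k$ recursion $1/k_{m+1} = p_{11}/k_m + \bigl[(p_{12}+p_{13})/p_c - 1/p'_c\bigr]$. Its fixed point is exactly $1/k^*$, the solution of (\ref{eq:u_1 k *}), and the multiplier is $p_{11}\in[0,1)$ by (\ref{ass:main}). So the map $t\mapsto p_{11}t + b$ on $t=1/k$ is a contraction toward $1/k^*$. I would split into the three cases according to the sign of $k^*$, equivalently the sign of $1/k^*$. If $1/k^*>0$ (case 1): since $p_{11}<1$ and the recursion is a contraction, iterating from $1/k_1<1/p_c$ we get $1/k_m\downarrow 1/k^*$, so $1/k_m<1/k^*$ for all $m$ (one checks $1/k^*<1/p_c$ fails — in fact $k^*>p_c$ follows from plugging $k=p_c$ into the left side of (\ref{eq:u_1 k *}) and comparing with $1/p'_c$, using $\sum_j p_{1j}<p_c$), and we obtain $|u_1|_{k_1}\leq C$ for every $k_1<k^*$. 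If $1/k^*\le 0$, the recursion has no positive fixed point; since $p_{11}<1$, after finitely many steps $1/k_m$ becomes $\le 0$, i.e.\ some $B^{k_m}$-norm with $k_m=\infty$ is reached. When $k^*=\infty$ (the borderline $1/k^*=0$) one gets $|u_1|_{k_1}\leq C$ for all finite $k_1$ but not necessarily $L^\infty$; when $k^*<0$ the iteration overshoots and one more application of Proposition~\ref{prop:L1}--\ref{prop:L1 delta} with an exponent $>p'_c$ gives $|u_1|_\infty\leq C$ directly.

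At each step I must verify the hypothesis $1/m-1/k<1/p'_c$ of Proposition~\ref{prop:L1}--\ref{prop:L1 delta} (in the unified form (\ref{ineq:m k in one form})); here $m$ is the exponent such that $f_1\in B^m$, obtained from $1/m = p_{11}/k_m + (p_{12}+p_{13})/p_c$ plus the $\eps$-losses, and $k=k_{m+1}$, so the inequality is exactly the strict version of the defining relation for the recursion, which holds by construction since I keep the $u_2,u_3$ exponents strictly below $p_c$ and choose $k_{m+1}$ strictly below the value given by equality. I also need $h\in B^\theta$ with $\theta>p'_c$ to not obstruct any step, which is immediate from (\ref{ass:optimal condition in one form}). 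The main obstacle — really a bookkeeping obstacle rather than a deep one — is handling these $\eps$-losses uniformly across infinitely many iterations in case~1 so that the limiting exponent is genuinely $k^*$ and not something strictly smaller; the standard device is to prove that for each fixed target $k_1<k^*$ one can choose a common $\eps$ (equivalently, $B^s$-exponents $s<p_c$ close enough to $p_c$) and a finite number of steps reaching past $k_1$, since the un-perturbed recursion reaches any $k_1<k^*$ in finitely many steps and the perturbed one depends continuously on $\eps$.
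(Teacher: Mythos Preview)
Your approach is correct and follows the same underlying strategy as the paper --- bootstrap on the first equation alone, freezing $u_2,u_3$ in $B^s$ for $s$ just below $p_c$ via (\ref{ass:abstract}) --- but your organization is genuinely cleaner. The paper constructs the iterates $K^m$ by an explicit convex-combination scheme
\[
\frac{p_{11}}{K^{m-1}}-\frac{1}{K^m}=\tau^m\Bigl(\frac{p_{11}}{k}-\frac{1}{k}\Bigr)+(1-\tau^m)\Bigl(\frac{p_{11}}{K}-\frac{1}{K}\Bigr)
\]
for a carefully chosen $\tau<1$, then checks positivity, monotonicity, and convergence to a prescribed target $K<k^*$ by hand; the $r_1$-constraint is handled by merging in a second auxiliary sequence $L^m$ when needed. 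Your viewpoint --- the recursion on $t=1/k$ is the affine map $t\mapsto p_{11}t+\bigl[(p_{12}+p_{13})/p_c-1/p'_c\bigr]$, a contraction since $p_{11}<1$ by (\ref{ass:main}), with fixed point exactly $1/k^*$ --- makes the trichotomy on the sign of $1/k^*$ and the role of $k^*$ as the sharp threshold immediately transparent, and replaces the paper's ad hoc sequence construction with a one-line dynamical argument.

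Two small points. First, a sign slip: in case~(1) you write ``$1/k_m<1/k^*$ for all $m$''; since you start at $k_1<k^*$ and the sequence increases toward $k^*$, this should read $1/k_m>1/k^*$ (equivalently $k_m<k^*$). Second, your treatment of the $u_1^{r_1}$-term (``dominated since $r_1<p_c$'') is slightly glib: the constraint $r_1/k_m-1/k_{m+1}<1/p'_c$ can in principle be the binding one for early $m$. But since $k_m>p_c$ and $r_1<p_c$ one has $(r_1-1)/k_m<(p_c-1)/p_c=1/p'_c$, so the $r_1$-constraint alone always permits $k_{m+1}\geq k_m$; and at $t=1/k^*$ one checks $(r_1-1)/k^*<1/p'_c$ (because $k^*>p_c>p'_c(r_1-1)$), so near the limit the product constraint dominates. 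This is exactly what the paper's ``insert a second sequence'' device accomplishes, only you get it for free from the fixed-point picture.
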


\begin{proof}
(1) According to Lemma \ref{lem:lemma for p1j} (3) and the
definition of $k^*$, for any $K:p_c<K<k^*$ sufficiently close to
$k^*$, there exist $k:(p_{11}+p_{12}+p_{13})\vee r_1<k<p_c$, such
that
\begin{eqnarray}
    &&\frac{p_{11}}{k}+\frac{p_{12}+p_{13}}{k}-\frac{1}{k}<\frac{1}{p'_c},\nonumber\\
          [-1.5ex]\label{ex:k K}\\[-1.5ex]
    &&\frac{p_{11}}{K}+\frac{p_{12}+p_{13}}{k}-\frac{1}{K}<\frac{1}{p'_c},\nonumber
\end{eqnarray}
and
\begin{eqnarray}\label{ex:hm increasing}
  \frac{p_{11}}{k}-\frac{1}{k}<\frac{p_{11}}{K}-\frac{1}{K}.\ \ \ \
  (\mathrm{since}\ p_{11}<1)
\end{eqnarray}
We construct a sequence $\{K^m: m\geq 1\}$ such that
\begin{eqnarray*}
   &&\frac{p_{11}}{K^{m-1}}-\frac{1}{K^m}=\tau^m(\frac{p_{11}}{k}-\frac{1}{k})
                 +(1-\tau^m)(\frac{p_{11}}{K}-\frac{1}{K})\equiv h_m,\ \ K^0=k,\\
       &&\ \ \ \ m=1,2,\cdots\nonumber
\end{eqnarray*}
where $\tau:0<\tau<1$ will be determined later. From (\ref{ex:hm
increasing}), we know that $h_m$ is increasing. Since $k<p_c<K$,
there exists $\tau:\tau<1$, close enough to 1, such that
\begin{eqnarray*}
   \frac{1}{K^1}=\frac{p_{11}}{k}-\tau(\frac{p_{11}}{k}-\frac{1}{k})
                 -(1-\tau)(\frac{p_{11}}{K}-\frac{1}{K})>\frac{1}{K}.
\end{eqnarray*}
Therefore, it is easy to verify by the induction method that
\begin{eqnarray*}
   \frac{1}{K^m}>\frac{1}{K}\ \ \mathrm{for}\ m=2,3,\cdots,
\end{eqnarray*}
which implies that $\{K^m: m\geq 1\}$ is a positive sequence.
According to the construction, the positivity of the sequence
implies that $\{K^m: m\geq 1\}$ is an increasing sequence.
Obviously, $K^m\rightarrow K$ as $m\rightarrow \infty$. From
(\ref{ex:k K}) and the construction of $K^m$, we have
\begin{eqnarray}\label{bt:Km}
     \frac{p_{11}}{K^{m-1}}+\frac{p_{12}+p_{13}}{k}-\frac{1}{K^m}<\frac{1}{p'_c},\ \ m\geq 1.
\end{eqnarray}
We can assume that $\{K^m: m\geq 1\}$ also satisfies
\begin{eqnarray}\label{bt:ri}
    \frac{r_1}{K^{m-1}}-\frac{1}{K^m}<\frac{1}{p'_c},\ \ m\geq 1.
\end{eqnarray}
Otherwise, we can construct another increasing sequence $\{L^m:
m\geq 1\}$ such that
\begin{eqnarray*}
    \frac{r_1}{L^{m-1}}-\frac{1}{L^m}<\frac{1}{p'_c}.
\end{eqnarray*}
For example, we can set $L^m=K-\tau^m(K-k)\ (m\geq 1)$, for
$\tau<1$, close enough to 1. Inserting $\{L^m: m\geq 1\}$ into
$\{K^m: m\geq 1\}$, we can get an increasing sequence, also denoted
by $\{K^m: m\geq 1\}$, which satisfies (\ref{bt:Km}) and
(\ref{bt:ri}).

Set
\begin{equation*}
    \frac{1}{\rho_m}=\frac{p_{11}}{K^m}+\frac{p_{12}+p_{13}}{k}<1, \ \
    \frac{1}{\varrho_m}=\frac{r_1}{K^m}<1,
\end{equation*}
Note that
\begin{eqnarray*}
    \frac{1}{\rho_m}>
    \frac{p_{11}}{k^*}+\frac{p_{12}+p_{13}}{p_c}>\frac{1}{p'_c}.
\end{eqnarray*}
So $\rho_m\wedge\varrho_m<p'_c<\theta$. Then
$|h|_{\rho_m\wedge\varrho_m}\leq C|h|_\theta\leq C$ for all $m\geq
0$.

We already have $|u_1|_k\leq C,\ |u_2|_k\leq C,\ |u_3|_k\leq C$ from
(\ref{ass:abstract}). If we have got $|u_1|_{K^m}\leq C$ for some
$m\geq 0$, applying Proposition \ref{prop:L1}-\ref{prop:L1 delta},
using (\ref{bt:Km}), (\ref{bt:ri}) and the first equation of system
(\ref{sys:main}), a similar argument as (\ref{est:u i step m})
($i=1$) yields that $|u_1|_{K^{m+1}}\leq C$. So, for any integer
$m\geq 0$, there holds $|u_1|_{K^m}\leq C$. Noting $K^m\rightarrow
K$, (1) is proved.

(2) The above proof is also valid for any $K$ sufficiently large.

(3) The negativity of  $k^*$ implies that $p_{12}+p_{13}<p_c/p'_c$,
so for $K$ large enough, there holds
\begin{equation*}
     \frac{p_{11}}{K}+\frac{p_{12}+p_{13}}{p_c}<\frac{1}{p'_c},\ \ \ \frac{r_1}{K}<\frac{1}{p'_c}.
\end{equation*}
For such $K$, there exist $k:(p_{11}+p_{12}+p_{13})\vee r_1<k<p_c$
such that
\begin{eqnarray*}
    &&\frac{p_{11}}{k}+\frac{p_{12}+p_{13}}{k}-\frac{1}{k}<\frac{1}{p'_c},\\
    &&\frac{p_{11}}{K}+\frac{p_{12}+p_{13}}{k}-\frac{1}{K}<\frac{1}{p'_c}.
\end{eqnarray*}
Let $\{K^m: m\geq 1\}$, $\rho_m,\varrho_m$ be as in (1). For $m$
large enough, we have $\rho_m\wedge\varrho_m>p'_c$. Denote
$m_0=\min\{m:\rho_m\wedge\varrho_m>p'_c\}$. We claim that after
$m_0$-th bootstrap on the first equation of system (\ref{sys:main}),
we arrive that $|u_1|_{L^\infty}\leq C$. The argument is similar to
that of Lemma \ref{lem:sum pij equal}. We omit it.
\end{proof}

We first consider the case where $k^*$ is positive, which implies
that $p_{12}+p_{13}>p_c/p'_c$. A careful computation yields
\begin{eqnarray*}
        \frac{
        \left|
        \begin{array}{ccccc}
          1-p_{11} & p_{12}+p_{13} \\
          -p_{21} & p_{22}+p_{23}-1
        \end{array}
        \right|}
        {\left|
        \begin{array}{ccccc}
          1-p_{11} & 1 \\
          -p_{21} & 1
        \end{array}
        \right|}
        \leq\frac{-|I-P|}{\Lambda^3}=\frac{1}{\alpha_3}<\frac{p_c}{p'_c},
\end{eqnarray*}
which is proved in Lemma \ref{lem:bootstrap}, is equivalent to
inequality
\begin{equation}\label{bt:second equation}
     \frac{p_{21}}{k^*}+\frac{p_{22}+p_{23}}{p_c}<1.
\end{equation}
From this inequality, there exist $k_1:p_c<k_1<k^*$, $k:r_2<k<p_c$
and $k_2:k_2>p_c$ such that
\begin{eqnarray*}
    \frac{p_{21}}{k_1}+\frac{p_{22}+p_{23}}{k}<1,
    \ \ \frac{p_{21}}{k_1}+\frac{p_{22}+p_{23}}{k}-\frac{1}{k_2}<\frac{1}{p'_c},\ \ \
      \frac{r_2}{k}-\frac{1}{k_2}<\frac{1}{p'_c}.
\end{eqnarray*}
Setting
\begin{equation*}
    \frac{1}{\rho}=\frac{p_{21}}{k_1}+\frac{p_{22}+p_{23}}{k}<1, \ \
    \frac{1}{\varrho}=\frac{r_2}{k}<1,
\end{equation*}
applying Proposition \ref{prop:L1}-\ref{prop:L1 delta}, a similar
argument as (\ref{est:u i step m}) ($i=2$), we have $|u_2|_{k_2}\leq
C$. So the integrability of $u_2$ is improved. However, generally,
the estimates $|u_1|_{k_1}\leq C$ and $|u_2|_{k_2}\leq C$ are not
sufficient for the bootstrap on the third equation. The next lemma
asserts that, using only the first two equations of system
(\ref{sys:main}), the integrability of $u_1$ and $u_2$ can be
improved for the bootstrap on the third equation.

\begin{lem}\label{lem:second bootstrap}
Let $(k_1^*,k_2^*)$ be the solution of the following linear system
\begin{eqnarray}
   &&\frac{p_{12}}{k^*_2}+\frac{p_{13}}{p_c}-\frac{1-p_{11}}{k^*_1}=\frac{1}{p'_c},\nonumber\\
                [-1.5ex]\label{eq:second bootstrap}\\[-1.5ex]
   &&\frac{p_{21}}{k^*_1}+\frac{p_{23}}{p_c}-\frac{1-p_{22}}{k^*_2}=\frac{1}{p'_c}\nonumber.
\end{eqnarray}
\begin{enumerate}
  \item If $0<k_1^*,k_2^*\leq \infty$, then $k_1^*>k^*,k_2^*>p_c$ and,  for any
            $1\leq k_1<k_1^*,\ 1\leq k_2<k_2^*$, we have $|u_1|_{k_1},\ |u_2|_{k_2}\leq C$;
  \item If $k_1^*$ or $k_2^*$ is negative, then $|u_1|_\infty\leq C$
          or $|u_2|_\infty\leq C$.
\end{enumerate}
\end{lem}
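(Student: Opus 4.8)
\textbf{Proof proposal for Lemma \ref{lem:second bootstrap}.}
The plan is to run a coupled two-component bootstrap using only the first two equations of system (\ref{sys:main}), treating the $u_3$-factor in $f_1$ and $f_2$ as a fixed known quantity. From Lemma \ref{lem:integrability of u_1} we already have $|u_1|_{k_1}\leq C$ for every $k_1<k^*$ (in case (1); the cases where $k^*$ is $\infty$ or negative are easier and would be dispatched first), and by the discussion immediately preceding this lemma we also have $|u_2|_{k_2}\leq C$ for some $k_2>p_c$. We also have the a priori control $|u_3|_k\leq C$ for all $k<p_c$ from (\ref{ass:abstract}). The idea is to alternately feed the current integrability estimates for $u_1$ and $u_2$ into the first and second equations, using Proposition \ref{prop:L1}--\ref{prop:L1 delta} in the same way as in (\ref{est:u i step m}), keeping the $u_3$-exponents $p_{13},p_{23}$ always evaluated "at $p_c$" (i.e.\ contributing $p_{13}/p_c$, $p_{23}/p_c$ to the reciprocal exponent), and bounding the $u_i^{r_i}$ terms and the $h$ term separately exactly as before.

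First I would reformulate the system (\ref{eq:second bootstrap}) in the variables $x_1 = 1/k_1^*$, $x_2 = 1/k_2^*$: it becomes the linear system
\begin{eqnarray*}
   &&(p_{11}-1)x_1 + p_{12} x_2 = \frac{1}{p'_c} - \frac{p_{13}}{p_c},\\
   &&p_{21} x_1 + (p_{22}-1) x_2 = \frac{1}{p'_c} - \frac{p_{23}}{p_c},
\end{eqnarray*}
whose coefficient matrix is $-(I-P)$ restricted to the $\{1,2\}$ block. By (\ref{ass:main}) this $2\times 2$ principal submatrix is a nonsingular $M$-matrix, so $-(I-P)_{\{1,2\}}$ has all entries of the "right" sign and the solution $(x_1,x_2)$ is obtained by Cramer's rule with a positive determinant $\Delta := (1-p_{11})(1-p_{22}) - p_{12}p_{21} > 0$. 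From this representation one reads off the sign dichotomy: $k_1^*>0$ (resp.\ $=\infty$, resp.\ $<0$) exactly according to the sign of the corresponding Cramer numerator, and similarly for $k_2^*$. The claimed inequalities $k_1^*>k^*$ and $k_2^*>p_c$ in case (1) should follow by comparing the solution of (\ref{eq:second bootstrap}) with that of (\ref{eq:u_1 k *}): subtracting the defining relations and using $p_{12}x_2>0$, $p_{21}x_1>0$ forces $1/k_1^* < 1/k^*$ and $1/k_2^* < 1/p_c$. I expect the inequality $1/\alpha_3 < p_c/p'_c$, equivalently (\ref{ass:optimal condition reduced}), to be precisely what guarantees the Cramer numerators are large enough for the bootstrap to "gain" at each step rather than stall.

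The bootstrap itself is the routine part. In case (1), I would construct increasing sequences $\{K_1^m\}$, $\{K_2^m\}$ with $K_1^m \to k_1^*$, $K_2^m \to k_2^*$ (truncated below $k_1^*$, $k_2^*$ if those are finite), starting from the estimates already in hand, such that at each stage the exponent arithmetic
\[
   \frac{p_{11}}{K_1^{m-1}} + \frac{p_{12}}{K_2^{m-1}} + \frac{p_{13}}{p_c} - \frac{1}{K_1^m} < \frac{1}{p'_c}, \qquad
   \frac{p_{21}}{K_1^{m-1}} + \frac{p_{22}}{K_2^{m-1}} + \frac{p_{23}}{p_c} - \frac{1}{K_2^m} < \frac{1}{p'_c}
\]
holds, together with the parallel $r_1,r_2$-inequalities obtained by inserting an auxiliary sequence as in the proof of Lemma \ref{lem:integrability of u_1}. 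Applying Proposition \ref{prop:L1}--\ref{prop:L1 delta} to the first two equations of (\ref{sys:main}) and arguing as in (\ref{est:u i step m}) then propagates $|u_1|_{K_1^m},|u_2|_{K_2^m}\leq C$ by induction, giving $|u_1|_{k_1},|u_2|_{k_2}\leq C$ for all $k_1<k_1^*$, $k_2<k_2^*$. In case (2), if say $k_1^*<0$, then the numerator in Cramer's rule is negative, which translates into $p_{12}/p_c + p_{13}/p_c < 1/p'_c$ being slack enough that for $K$ large one has $p_{11}/K + p_{12}/p_c + p_{13}/p_c < 1/p'_c$ and $r_1/K < 1/p'_c$; a finite bootstrap of $m_0 = \min\{m : \rho_m\wedge\varrho_m > p'_c\}$ steps on the first equation then yields $|u_1|_\infty\leq C$, exactly as in Lemma \ref{lem:integrability of u_1}(3) and Lemma \ref{lem:sum pij equal}. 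The main obstacle is bookkeeping: one must verify that the two coupled sequences can be chosen simultaneously increasing and feasible — i.e.\ that the gain built into (\ref{eq:second bootstrap}) via the $M$-matrix hypothesis is strictly positive at every step — and that the degenerate cases ($k^*$ or one of $k_i^*$ infinite or negative, and the possibility that only one of $u_1,u_2$ becomes bounded) are correctly separated; the analytic input is entirely supplied by Proposition \ref{prop:L1}--\ref{prop:L1 delta}.
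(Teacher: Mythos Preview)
Your treatment of part (1) is essentially the paper's: reformulate (\ref{eq:second bootstrap}) in reciprocals, use the $M$-matrix structure of $(I-P)_{\{1,2\}}$ to see the signs, and run a coupled bootstrap with increasing sequences $(K_1^m,K_2^m)$ converging to the target. Two minor bookkeeping points: the paper evaluates the $u_3$-factor at a fixed $k<p_c$ close to $p_c$ rather than literally at $p_c$ (since (\ref{ass:abstract}) gives control only for $k<p_c$), and it orders the update alternately --- first $K_2^m$ from $(K_1^{m-1},K_2^{m-1})$, then $K_1^m$ from $(K_1^{m-1},K_2^m)$ --- rather than simultaneously. Neither discrepancy is fatal; your simultaneous scheme would also work after the obvious adjustments.

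Part (2), however, has a genuine gap. Your claim that $k_1^*<0$ ``translates into $p_{12}/p_c+p_{13}/p_c<1/p'_c$'' is the condition $p_{12}+p_{13}<p_c/p'_c$, which is precisely the condition for $k^*<0$ in Lemma~\ref{lem:integrability of u_1}, \emph{not} for $k_1^*<0$. But the whole lemma is stated and proved under the standing hypothesis $k^*>0$ (equivalently $p_{12}+p_{13}>p_c/p'_c$, see the paragraph before the lemma), so your inequality is false here and a single-equation bootstrap on the first equation alone saturates at $k^*<\infty$, not at $\infty$. The correct translation of $k_1^*<0$ via Cramer is
\[
   (1-p_{22})p_{13}+p_{12}p_{23}<\frac{p_c}{p'_c}(1-p_{22}+p_{12}),
\]
which involves both rows of $P$ and does not decouple. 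The paper's proof of (2) accordingly runs a \emph{coupled} two-equation bootstrap, and must split into three subcases according to the sign of $p_{13}-p_c/p'_c$: in one subcase $|u_1|_\infty\leq C$ is obtained, in another $k_2^*<0$ is forced (by $p_{12}p_{21}<(1-p_{11})(1-p_{22})$) and $|u_2|_\infty\leq C$ follows instead. Your sketch for (2) needs to be replaced by this coupled argument; the single-equation shortcut does not go through.
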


\begin{proof}
(1) We first consider $0<k_1^*,k_2^*<\infty$. A simple computation
yields $k_1^*>k^*,k_2^*>p_c$. Fix any $K_1:k^*<K_1<k_1^*$
sufficiently close to $k_1^*$ and $K_2:p_c<K_2<k_2^*$ sufficiently
close to $k_2^*$ such that
\begin{eqnarray*}
   &&\frac{p_{12}}{K_2}-\frac{1-p_{11}}{K_1}<\frac{p_{12}}{k^*_2}-\frac{1-p_{11}}{k^*_1},\\
   &&\frac{p_{21}}{K_1}-\frac{1-p_{22}}{K_2}<\frac{p_{21}}{k^*_1}-\frac{1-p_{22}}{k^*_2},
\end{eqnarray*}
due to $p_{12}p_{21}<(1-p_{11})(1-p_{22})$. According to the
definition of $k_1^*,k_2^*$, there exists $k:r_1\vee r_2<k<p_c$
close enough to $p_c$ such that
\begin{subequations}
\renewcommand{\theequation}{\theparentequation-\arabic{equation}}
\begin{eqnarray}
   &&\frac{p_{12}}{K_2}+\frac{p_{13}}{k}-\frac{1-p_{11}}{K_1}<\frac{1}{p'_c},
                \label{eq:second bootstrap K1 K2 E1}\\
   &&\frac{p_{21}}{K_1}+\frac{p_{23}}{k}-\frac{1-p_{22}}{K_2}<\frac{1}{p'_c}.
                \label{eq:second bootstrap K1 K2 E2}
\end{eqnarray}
\end{subequations}
For such $k$, (\ref{eq:u_1 k *}) and (\ref{bt:second equation})
imply that there exist $k_1:p_c<k_1<k_1^*$ close enough to $k_1^*$
such that
\begin{subequations}
\renewcommand{\theequation}{\theparentequation-\arabic{equation}}
\begin{eqnarray}
   &&\frac{p_{12}}{k}+\frac{p_{13}}{k}-\frac{1-p_{11}}{k_1}<\frac{1}{p'_c},
                \label{eq:second bootstrap k1 k2 E1}\\
   &&\frac{p_{21}}{k_1}+\frac{p_{23}}{k}-\frac{1-p_{22}}{k}<\frac{1}{p'_c}.
                \label{eq:second bootstrap k1 k2 E2}
\end{eqnarray}
\end{subequations}
For $K_1,K_2$ close enough to $K_1^*,K_2^*$ respectively, we also
have
\begin{subequations}
\renewcommand{\theequation}{\theparentequation-\arabic{equation}}
\begin{eqnarray}
   &&\frac{p_{11}}{k_1}+\frac{p_{12}}{k}-\frac{1}{k_1}<
                 \frac{p_{11}}{K_1}+\frac{p_{12}}{K_2}-\frac{1}{K_1},\label{eq:hm increasing}\\
   &&\frac{p_{21}}{k_1}+\frac{p_{22}}{k}-\frac{1}{k}<
                 \frac{p_{21}}{K_1}+\frac{p_{22}}{K_2}-\frac{1}{K_2}.\label{eq:lm increasing}
\end{eqnarray}
\end{subequations}

Let $(K_1^m,K_2^m)$ be the sequence constructed below:
\begin{eqnarray*}
   &&\frac{p_{11}}{K^{m-1}_1}+\frac{p_{12}}{K^m_2}-\frac{1}{K^m_1}\nonumber\\
       &&\ \ \ \ =\tau^m(\frac{p_{11}}{k_1}+\frac{p_{12}}{k}-\frac{1}{k_1})
                 +(1-\tau^m)(\frac{p_{11}}{K_1}+\frac{p_{12}}{K_2}-\frac{1}{K_1})\equiv h_m,\\
   &&\frac{p_{21}}{K^{m-1}_1}+\frac{p_{22}}{K^{m-1}_2}-\frac{1}{K^m_2}\nonumber\\
       &&\ \ \ \ =\tau^m(\frac{p_{21}}{k_1}+\frac{p_{22}}{k}-\frac{1}{k})
                 +(1-\tau^m)(\frac{p_{21}}{K_1}+\frac{p_{22}}{K_2}-\frac{1}{K_2})\equiv l_m,\\
       &&K_1^0=k_1,K_2^0=k\nonumber\\
       &&\ \ \ \ m=1,2,\cdots\nonumber
\end{eqnarray*}
where $\tau:0<\tau<1$ will be determined later. (\ref{eq:hm
increasing})-(\ref{eq:lm increasing}) implies that $h_m,l_m$ are
increasing. A simple computation yields
\begin{eqnarray*}
   &&\frac{1}{K^m_2}=\frac{p_{21}}{K^{m-1}_1}+\frac{p_{22}}{K^{m-1}_2}-l_m,\\
   &&\frac{1}{K^m_1}=\frac{p_{11}+p_{12}p_{21}}{K^{m-1}_1}+\frac{p_{12}p_{22}}{K^{m-1}_2}-p_{12}l_m-h_m,\\
   &&\ \ \ \ m=1,2,\cdots
\end{eqnarray*}
from which we can deduce that
\begin{eqnarray*}
   \frac{1}{K^m_1}>\frac{1}{K_1},\ \ \ \frac{1}{K^m_2}>\frac{1}{K_2},
   \ \ \ \ m=2,3,\cdots
\end{eqnarray*}
if
\begin{eqnarray*}
   \frac{1}{K^1_1}>\frac{1}{K_1},\ \ \
   \frac{1}{K^1_2}>\frac{1}{K_2}.
\end{eqnarray*}
A small perturbation of the definition of $K_1^1,K_2^1$ with respect
to $\tau=1$ gives the above inequalities for $\tau:\tau<1$ close
enough to 1. Since $K^m_1,K^m_2>0$ and $h_m,l_m$ are increasing,
from the definition, we know that $K^m_1,K^m_2$ are also increasing.
Furthermore, $K^m_1\rightarrow K_1,K^m_2\rightarrow K_2$ as
$m\rightarrow \infty$.

Interpolating between (\ref{eq:second bootstrap K1 K2 E1}) and
(\ref{eq:second bootstrap k1 k2 E1}), (\ref{eq:second bootstrap K1
K2 E2}) and (\ref{eq:second bootstrap k1 k2 E2}), we have
\begin{subequations}
\renewcommand{\theequation}{\theparentequation-\arabic{equation}}
\begin{eqnarray}
   &&\frac{p_{11}}{K_1^{m-1}}+\frac{p_{12}}{K_2^m}+\frac{
       p_{13}}{k}-\frac{1}{K_1^m}<\frac{1}{p'_c},\label{bt:second step 1}\\
   &&\frac{p_{21}}{K_1^{m-1}}+\frac{p_{22}}{K_2^{m-1}}+\frac{
       p_{23}}{k}-\frac{1}{K_2^m}<\frac{1}{p'_c},\label{bt:second step 2}
\end{eqnarray}
\end{subequations}
for all $m\geq 1$. Similar to Lemma \ref{lem:integrability of u_1}
(1), we also assume that
\begin{eqnarray*}
    \frac{r_1}{K_1^{m-1}}-\frac{1}{K_1^m}<\frac{1}{p'_c},\\
    \frac{r_2}{K_2^{m-1}}-\frac{1}{K_2^m}<\frac{1}{p'_c},
\end{eqnarray*}
for all $m\geq 1$.

Set
\begin{equation*}
    \frac{1}{\rho_m}=\frac{p_{11}}{K_1^{m-1}}+\frac{p_{12}}{K_2^m}+\frac{
       p_{13}}{k}<1, \ \
    \frac{1}{\varrho_m}=\frac{r_1}{K_1^{m-1}}<1,
\end{equation*}
\begin{equation*}
    \frac{1}{\mu_m}=\frac{p_{21}}{K_1^{m-1}}+\frac{p_{22}}{K_2^{m-1}}+\frac{
       p_{23}}{k}<1, \ \
    \frac{1}{\nu_m}=\frac{r_2}{K_2^{m-1}}<1.
\end{equation*}
Note that
\begin{eqnarray*}
    \frac{1}{\rho_m}>\frac{p_{11}}{k^*_1}+\frac{p_{12}}{k^*_2}+\frac{p_{13}}{p_c}>\frac{1}{p'_c},\ \
      \frac{1}{\mu_m}>\frac{p_{21}}{k^*_1}+\frac{p_{22}}{k^*_2}+\frac{p_{23}}{p_c}>\frac{1}{p'_c}.
\end{eqnarray*}
So $\rho_m\wedge\varrho_m<p'_c<\theta$ and
$\mu_m\wedge\nu_m<p'_c<\theta$. Then
$|h|_{\mu_m\wedge\nu_m},|h|_{\rho_m\wedge\varrho_m}\leq
C|h|_\theta\leq C$ for all $m\geq 0$.

We already have $|u_1|_{k_1}\leq C$ from Lemma
\ref{lem:integrability of u_1} and $|u_2|_k\leq C,\ |u_3|_k\leq C$
from (\ref{ass:abstract}). If we have got $|u_1|_{K_1^m}\leq C$ and
$|u_2|_{K_2^m}\leq C$ for some $m\geq 0$, applying Proposition
\ref{prop:L1}-\ref{prop:L1 delta}, using (\ref{bt:second step 2})
and the second equation of system (\ref{sys:main}), a similar
argument as (\ref{est:u i step m}) ($i=2$) yields that
$|u_2|_{K_2^{m+1}}\leq C$ and, using (\ref{bt:second step 1}) and
the first equation, we obtain $|u_1|_{K_1^{m+1}}\leq C$. So, for any
integer $m\geq 0$, there holds $|u_1|_{K_1^m}\leq C$ and
$|u_2|_{K_2^m}\leq C$. Noting $K_1^m\rightarrow K_1$ and
$K_2^m\rightarrow K_2$, (1) is proved for $0<k_1^*,k_2^*<\infty$.

If $k_1^*=\infty$ or $k_2^*=\infty$, the above proof is valid for
any $K_1$ or $K_2$ sufficient large.

(2) If $k_1^*$ is negative, we necessarily have
\begin{eqnarray}\label{eq:k_1^* negative}
  (1-p_{22})p_{13}+p_{12}p_{23}<\frac{p_c}{p'_c}(1-p_{22}+p_{12}).
\end{eqnarray}
If $k_2^*$ is negative, we necessarily have
\begin{eqnarray}\label{eq:k_2^* negative}
  p_{21}p_{13}+(1-p_{11})p_{23}<\frac{p_c}{p'_c}(1-p_{11}+p_{21}).
\end{eqnarray}
Without loss of generality, we assume $k_1^*$ is negative. From
(\ref{eq:k_1^* negative}), there are three possibilities:
$p_{13}<p_c/p'_c$, $p_{13}>p_c/p'_c$ or $p_{13}=p_c/p'_c$.

\textbf{Case I.} $p_{13}<p_c/p'_c$. Let $K_2^*$ be the positive
solution of
\begin{equation*}
    \frac{p_{12}}{K_2^*}+\frac{p_{13}}{p_c}=\frac{1}{p'_c}.
\end{equation*}
Since $p_{12}+p_{13}>p_c/p'_c$, we have $K_2^*>p_c$. The inequality
\begin{equation*}
    \frac{p_{22}}{K_2^*}+\frac{p_{23}}{p_c}-\frac{1}{K_2^*}<\frac{1}{p'_c}
\end{equation*}
is equivalent to (\ref{eq:k_1^* negative}). So there exist
$K_2:K_2>K_2^*$ sufficiently close to $K_2^*$ and $k:r_1\vee
r_2<k<p_c$ sufficiently close to $p_c$ such that
\begin{eqnarray*}
    &&\frac{p_{12}}{K_2}+\frac{p_{13}}{k}<\frac{1}{p'_c},\\
    &&\frac{p_{22}}{K_2}+\frac{p_{23}}{k}-\frac{1}{K_2}<\frac{1}{p'_c}.
\end{eqnarray*}
For such $K_2$ fixed, take $K_1$ large enough such that
\begin{eqnarray*}
    &&\frac{p_{11}}{K_1}+\frac{p_{12}}{K_2}+\frac{p_{13}}{k}<\frac{1}{p'_c},\ \ \frac{r_1}{K_1}<\frac{1}{p'_c}\\
    &&\frac{p_{21}}{K_1}+\frac{p_{22}}{K_2}+\frac{p_{23}}{k}-\frac{1}{K_2}<\frac{1}{p'_c}.
\end{eqnarray*}
So we also have (\ref{eq:second bootstrap K1 K2 E1})-(\ref{eq:second
bootstrap K1 K2 E2}).

Let $(K_1^m,K_2^m)$ be as in (1). In order for $(h_m,l_m)$ to be
increasing, (\ref{eq:hm increasing})-(\ref{eq:lm increasing}) should
be satisfied for $K_1=\infty$. In fact, when
$k=p_c,k_1=k^*,K_2=K_2^*$, (\ref{eq:hm increasing}) is equivalent to
the equation defining $k^*$ and (\ref{eq:lm increasing}) is
equivalent to $p_{12}+p_{13}>p_c/p'_c$. Then a small perturbation
for these parameters will gives the desired inequalities. Let
$\rho_m,\varrho_m,\mu_m,\nu_m$ be as in (1). Since $K_1^m\rightarrow
K_1$ as $m\rightarrow\infty$, for $m$ large enough, we have
$\rho_m\wedge\varrho_m>p'_c$. Denote
$m_0=\min\{m:(\rho_m\wedge\varrho_m)\vee(\mu_m\wedge\nu_m)>p'_c\}$.
We may assume that $\rho_{m_0}\wedge\varrho_{m_0}>p'_c$. We claim
that after $m_0$-th alternate bootstrap on the first two equations
of system (\ref{sys:main}), we shall arrive at the desired result
$|u_1|_{\infty}\leq C$. The argument is similar to Lemma
\ref{lem:sum pij equal}.

\textbf{Case II.} $p_{13}>p_c/p'_c$. In this case we necessarily
have $k_2^*<0$, i.e., (\ref{eq:k_2^* negative}) is satisfied, since
$p_{12}p_{21}<(1-p_{11})(1-p_{22})$. Let $K_1^*:K_1^*>k^*$ be the
positive solution of
\begin{equation*}
    \frac{p_{13}}{p_c}-\frac{1-p_{11}}{K_1^*}=\frac{1}{p'_c}.
\end{equation*}
The inequality
\begin{equation*}
    \frac{p_{21}}{K_1^*}+\frac{p_{23}}{p_c}<\frac{1}{p'_c}
\end{equation*}
is equivalent to (\ref{eq:k_2^* negative}). So there exist
$K_1:K_1<K_1^*$ sufficiently close to $K_1^*$ and $k:r_1\vee
r_2<k<p_c$ such that
\begin{eqnarray*}
    &&\frac{p_{11}}{K_1}+\frac{p_{13}}{k}-\frac{1}{K_1}<\frac{1}{p'_c},\\
    &&\frac{p_{21}}{K_1}+\frac{p_{23}}{k}<\frac{1}{p'_c}.
\end{eqnarray*}
For such $K_1$ fixed, take $K_2$ large enough such that
\begin{eqnarray*}
    &&\frac{p_{11}}{K_1}+\frac{p_{12}}{K_2}+\frac{p_{13}}{k}-\frac{1}{K_1}<\frac{1}{p'_c},\\
    &&\frac{p_{21}}{K_1}+\frac{p_{22}}{K_2}+\frac{p_{23}}{k}<\frac{1}{p'_c},
    \ \ \frac{r_2}{K_2}<\frac{1}{p'_c}.
\end{eqnarray*}
So we also have (\ref{eq:second bootstrap K1 K2 E1})-(\ref{eq:second
bootstrap K1 K2 E2}).

Let $(K_1^m,K_2^m)$ be as in (1). In order for $(h_m,l_m)$ to be
increasing, (\ref{eq:hm increasing})-(\ref{eq:lm increasing}) should
be satisfied for $K_2=\infty$. In fact, when
$k=p_c,k_1=k^*,K_1=K_1^*$, (\ref{eq:hm increasing}) is also
equivalent to the equation defining $k^*$ and (\ref{eq:lm
increasing}) is equivalent to $p_{12}p_{21}<(1-p_{11})(1-p_{22})$.
Then a small perturbation for these parameters will gives the
desired inequalities. Let $\rho_m,\varrho_m,\mu_m,\nu_m$ be as in
(1). Since $K_2^m\rightarrow K_2$ as $m\rightarrow\infty$, for $m$
large enough, we have $\mu_m\wedge\nu_m>p'_c$. Denote
$m_0=\min\{m:(\rho_m\wedge\varrho_m)\vee(\mu_m\wedge\nu_m)>p'_c\}$.
We may assume that $\mu_{m_0}\wedge\nu_{m_0}>p'_c$. We claim that
after $m_0$-th alternate bootstrap on the first two equations of
system (\ref{sys:main}), we shall arrive at the desired result
$|u_2|_{\infty}\leq C$. The argument is similar to Lemma
\ref{lem:sum pij equal}.

\textbf{Case III.} $p_{13}=p_c/p'_c$. The proof is similar to Case
II. The difference is that we can take $K_1$ to be arbitrary large.
\end{proof}

Lemma \ref{lem:integrability of u_1} and Lemma \ref{lem:second
bootstrap} in hand, we can prove Theorem \ref{thm:abstract} for
$n=3$.

\noindent\textbf{\emph{Proof of Theorem \ref{thm:abstract}
for $n=3$.}}\\
\textbf{Case I.} $k^*$ or $k_1^*$ or $k_2^*$ is negative. From Lemma
\ref{lem:integrability of u_1} and \ref{lem:second bootstrap}, we
know that $|u_1|_\infty\leq C$ or $|u_2|_\infty\leq C$. We assume
that $|u_1|_\infty\leq C$. Then $f_2,f_3$ satisfy
\begin{eqnarray*}
   &&|f_2|\leq C(|u_2|^{p_{22}}|u_3|^{p_{23}}+|u_2|^{r_2})+h(x),\nonumber\\
        [-1.5ex]&&\hskip 80mm u, v\in \mathbb{R},\ x\in\Omega,\label{ass:f and g}\\[-1.5ex]
   &&|f_3|\leq C(|u_2|^{p_{23}}|u_3|^{p_{33}}+|u_3|^{r_3})+h(x),\nonumber
\end{eqnarray*} We consider the system formed by
the second and third equations of system (\ref{sys:main}). According
to (\ref{ass:main}), there hold $p_{22}<1$, $p_{33}<1$ and
$p_{23}p_{32}<(1-p_{22})(1-p_{33})$. From \cite[Theorem 2.7]{L}, we
obtain $|u_2|_\infty\leq C$ and $|u_3|_\infty\leq C$.

\textbf{Case II.} $k^*,k_1^*,k_2^*$ is nonnegative, and one of them
is $\infty$. From Lemma \ref{lem:integrability of u_1} and
\ref{lem:second bootstrap}, we know that $|u_1|_{k_1}\leq C$ or
$|u_2|_{k_2}\leq C$ for all $1\leq k_1,k_2<\infty$. We assume that
$|u_1|_{k_1}\leq C$ for all $1\leq k_1,k_2<\infty$. Noting that
$p_{21}/k_1,p_{31}/k_1\ll 1$ for sufficiently large $k_1$, from the
proof of \cite[Theorem 2.7]{L}, we also obtain $|u_2|_\infty\leq C$
and $|u_3|_\infty\leq C$. Then a simple bootstrap argument on the
first equation gives $|u_1|_\infty\leq C$.

\textbf{Case III.} $k^*,k_1^*,k_2^*$ are all positive. A careful
computation yields that (\ref{ass:optimal condition reduced}) is
equivalent to
\begin{eqnarray}\label{bt:third equation}
    \frac{p_{31}}{k_1^*}+\frac{p_{32}}{k_2^*}+\frac{p_{33}}{p_c}<1,
\end{eqnarray}
Combining with the definition of $k_1^*,k_2^*$, there exist
$k_1:r_1<k_1<k_1^*$, $k_2:r_2<k_2<k_2^*$, $k_3:r_3<k_3<p_c$ and
$\eta>1$ close to 1 such that
\begin{eqnarray}
    &&\displaystyle\frac{p_{11}}{k_1}+\frac{p_{12}}{\eta k_2}+\frac{p_{13}}{\eta k_3}
        -\frac{1}{\eta k_1}<\frac{1}{p'_c},\ \ \frac{r_1}{k_1}
        -\frac{1}{\eta k_1}<\frac{1}{p'_c},\nonumber\\
    &&\displaystyle\frac{p_{21}}{k_1}+\frac{p_{22}}{k_2}+\frac{p_{23}}{\eta k_3}
        -\frac{1}{\eta k_2}<\frac{1}{p'_c},\ \ \frac{r_2}{k_2}
        -\frac{1}{\eta k_2}<\frac{1}{p'_c},\nonumber\\
          [-1.5ex]\label{bt:first}\\[-1.5ex]
    &&\displaystyle\frac{p_{31}}{k_1}+\frac{p_{32}}{k_2}+\frac{p_{33}}{k_3}
        -\frac{1}{\eta k_3}<\frac{1}{p'_c},\ \ \frac{r_3}{k_3}
        -\frac{1}{\eta k_3}<\frac{1}{p'_c},\nonumber\\
    &&\displaystyle\frac{p_{31}}{k_1}+\frac{p_{32}}{k_2}+\frac{p_{33}}{k_3}<1.\nonumber
\end{eqnarray}
Multiplying LHS of the above inequalities by $1/\eta^m$, we have
\begin{subequations}
\renewcommand{\theequation}{\theparentequation-\arabic{equation}}
\begin{eqnarray}
    &&\displaystyle\frac{p_{11}}{\eta^mk_1}+\frac{p_{12}}{\eta^{m+1} k_2}+\frac{p_{13}}{\eta^{m+1} k_3}
        -\frac{1}{\eta^{m+1} k_1}<\frac{1}{p'_c},\nonumber\\
          [-1.5ex]\label{bt:1}\\[-1.5ex]
    &&\displaystyle\frac{r_1}{\eta^m k_1}
        -\frac{1}{\eta^{m+1} k_1}<\frac{1}{p'_c},\nonumber\\
    &&\displaystyle\frac{p_{21}}{\eta^mk_1}+\frac{p_{22}}{\eta^mk_2}+\frac{p_{23}}{\eta^{m+1} k_3}
        -\frac{1}{\eta^{m+1} k_2}<\frac{1}{p'_c},\nonumber\\
          [-1.5ex]\label{bt:2}\\[-1.5ex]
    &&\displaystyle\frac{r_2}{\eta^m k_2}
        -\frac{1}{\eta^{m+1} k_2}<\frac{1}{p'_c},\nonumber\\
    &&\displaystyle\frac{p_{31}}{\eta^mk_1}+\frac{p_{32}}{\eta^mk_2}+\frac{p_{33}}{\eta^mk_3}
        -\frac{1}{\eta^{m+1} k_3}<\frac{1}{p'_c},\nonumber\\
          [-1.5ex]\label{bt:3}\\[-1.5ex]
    &&\displaystyle\frac{r_3}{\eta^m k_3}
        -\frac{1}{\eta^{m+1} k_3}<\frac{1}{p'_c},\nonumber
\end{eqnarray}
\end{subequations}
for all integer $m\geq 0$.

Set
\begin{eqnarray*}
   &&\displaystyle\frac{1}{\zeta_m}=\frac{p_{11}}{\eta^mk_1}+\frac{p_{12}}{\eta^{m+1}k_2}
               +\frac{p_{13}}{\eta^{m+1}k_3}<1,
               \ \ \frac{1}{\xi_m}=\frac{r_1}{\eta^m k_1}<1,\\
   &&\displaystyle\frac{1}{\mu_m}=\frac{p_{21}}{\eta^mk_1}+\frac{p_{22}}{\eta^mk_2}
               +\frac{p_{23}}{\eta^{m+1} k_3}<1,\ \
               \frac{1}{\nu_m}=\frac{r_2}{\eta^m k_2}<1,\\
   &&\displaystyle\frac{1}{\rho_m}=\frac{p_{31}}{\eta^mk_1}+\frac{p_{32}}{\eta^mk_2}
               +\frac{p_{33}}{\eta^mk_3}<1,\ \
               \frac{1}{\varrho_m}=\frac{r_3}{\eta^m k_3}<1.
\end{eqnarray*}
Since $\eta>1$, for $m$ large enough, we have
$\zeta_m\wedge\xi_m>p'_c$, $\mu_m\wedge\nu_m>p'_c$ and
$\rho_m\wedge\varrho_m>p'_c$. Denote
$m_0=\min\{m:(\zeta_m\wedge\xi_m)\vee(\mu_m\wedge\nu_m)\vee(\rho_m\wedge\varrho_m)>p'_c\}$.
We may assume that $\rho_{m_0}\wedge\varrho_{m_0}>p'_c$. We claim
that after $m_0$-th alternate bootstrap on system (\ref{sys:main}),
we shall arrive at the desired result $|u_3|_{\infty}\leq C$. The
argument is similar to Lemma \ref{lem:sum pij equal}, we omit it.
Then Theorem \ref{thm:abstract} follows from a similar argument in
Case I. \ \ \ \ $\Box$

\begin{rem}\label{rem:I-P>0}
Theorem \ref{thm:abstract} can be extended to the case where
$|I-P|\geq 0$. In this case
$\max\{\alpha_1,\alpha_2,\alpha_3\}>1/(p_c-1)$ in (\ref{ass:optimal
condition in one form}) should be replaced by
$-|I-P|<(p_c-1)\max\{\Lambda^1,\Lambda^2,\Lambda^3\}$, which is
automatically satisfied since $\Lambda^1,\Lambda^2,\Lambda^3>0$.
Noting that (\ref{bt:third equation}) is equivalent to this
condition, the proof is word by word the same as the proof of
Theorem \ref{thm:abstract}.
\end{rem}

\section{The Bootstrap Procedure for System with $n(n\geq 4)$ Components}

According to Lemma \ref{lem:lemma for p1j}, we may assume that
$\alpha_n$ is the largest. We first prove a lemma which asserts that
the bootstrap on one equation by one equation is possible. For
$2\leq r\leq n$, set
\begin{eqnarray*}
Q_{r}= \left(
  \begin{array}{ccccc}
    1-p_{11} & -p_{12} & \cdots & -p_{1,r-1} & -\sum_{j=r}^n p_{1j} \\
    -p_{21} & 1-p_{22} & \cdots & -p_{2,r-1} & -\sum_{j=r}^n p_{2j} \\
    \cdots & \cdots & \cdots & \cdots & \cdots \\
    -p_{r,1}& -p_{r,2} & \cdots & -p_{r,r-1} & 1-\sum_{j=r}^n p_{rj} \\
  \end{array}
\right)_{r\times r}
\end{eqnarray*}
Let $\Lambda_r^j$ be the determinant of the matrix $Q_r$ whose
$j$-column is replaced by $\mathbf{1}$.

\begin{lem}\label{lem:bootstrap}
Without loss of generality, assume that for all $2\leq r\leq n$,
\begin{equation}\label{ass:alpha n}
     \Lambda_r^j\leq\Lambda_r^r,\ \ 1\leq j\leq r-1.
\end{equation}
Then
\begin{equation}\label{eq:bootstrap}
    \frac{-|Q_2|}{\Lambda^2_2}\leq
    \cdots\leq
    \frac{-|Q_r|}{\Lambda^r_r}\leq\cdots\leq
    \frac{-|Q_n|}{\Lambda^n_n}=\frac{1}{\alpha_n}<
    p_c-1.
\end{equation}
\end{lem}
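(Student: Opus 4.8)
I would first dispose of the rightmost equality and inequality. Since $Q_n=I-P$, and since Cramer's rule applied to $(I-P)\alpha=-\mathbf 1$ gives $\Lambda_n^j=\Lambda^j=-|I-P|\,\alpha_j$, we have $\frac{-|Q_n|}{\Lambda_n^n}=\frac{-|I-P|}{-|I-P|\,\alpha_n}=\frac1{\alpha_n}$; moreover for $r=n$ the hypothesis $(\ref{ass:alpha n})$ reads $-|I-P|\,\alpha_j\le-|I-P|\,\alpha_n$, i.e.\ $\alpha_n=\max_{1\le j\le n}\alpha_j$, so $(\ref{ass:optimal condition in one form})$ gives $\frac1{\alpha_n}<p_c-1$. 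It then remains to prove the single-step inequalities $\frac{-|Q_r|}{\Lambda_r^r}\le\frac{-|Q_{r+1}|}{\Lambda_{r+1}^{r+1}}$ for $2\le r\le n-1$.

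For these I would pass to a more transparent form. Let $\alpha^{(r)}$ be the solution of $Q_r\alpha^{(r)}=-\mathbf 1$; by Cramer's rule $\Lambda_r^j=-|Q_r|\,\alpha_j^{(r)}$, so $\frac{-|Q_r|}{\Lambda_r^r}=\frac1{\alpha_r^{(r)}}$ and likewise $\frac{-|Q_{r+1}|}{\Lambda_{r+1}^{r+1}}=\frac1{\alpha_{r+1}^{(r+1)}}$. I would use two sign facts coming from $(\ref{ass:main})$: the leading $r\times r$ principal submatrix $A_r$ of $I-P$ is a nonsingular $M$-matrix, so $\det A_r>0$ and $A_r^{-1}\ge0$; and $|Q_r|<0$ for every $r$ (equivalently $\alpha_r^{(r)}>0$), so that the two ratios are positive. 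Granting these, the desired inequality is just $\alpha_r^{(r)}\ge\alpha_{r+1}^{(r+1)}$. Now both $\alpha^{(r)}$ and $\alpha^{(r+1)}$ are "collapsed" solutions of $(I-P)\alpha=-\mathbf 1$ (with the last coordinates frozen, respectively, on $\{r,\dots,n\}$ and on $\{r{+}1,\dots,n\}$): subtracting the systems $Q_r\alpha^{(r)}=-\mathbf 1$ and $Q_{r+1}\alpha^{(r+1)}=-\mathbf 1$ componentwise, the vector of differences $d:=(\alpha_1^{(r)}-\alpha_1^{(r+1)},\dots,\alpha_{r-1}^{(r)}-\alpha_{r-1}^{(r+1)},\,\alpha_r^{(r)}-\alpha_{r+1}^{(r+1)})$ satisfies $Q_rd=-\delta\,A_re_r$, where $e_r$ is the $r$th standard basis vector and $\delta:=\alpha_{r+1}^{(r+1)}-\alpha_r^{(r+1)}$. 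Since $Q_r$ is the rank-one modification $A_r-\tilde v_re_r^{T}$ of $A_r$, with $\tilde v_r=\big(\sum_{j>r}p_{ij}\big)_{1\le i\le r}\ge0$, the Sherman--Morrison formula collapses this to the single scalar identity $d_r=-\delta\,\frac{\det A_r}{|Q_r|}$.

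It then remains only to fix the sign of $\delta$, and this is precisely where $(\ref{ass:alpha n})$ at level $r+1$ is used: the inequality $\Lambda_{r+1}^r\le\Lambda_{r+1}^{r+1}$ reads $|Q_{r+1}|\,\delta\le0$, so $|Q_{r+1}|<0$ forces $\delta\ge0$, and then $\det A_r>0$ and $|Q_r|<0$ give $d_r\ge0$, i.e.\ $\alpha_r^{(r)}\ge\alpha_{r+1}^{(r+1)}$, as required. The step I expect to be the genuine obstacle is the preliminary claim $|Q_r|<0$ for every $r$: it is clear for $r=n$ (this is $|I-P|<0$), but in general it must be extracted from the $M$-matrix structure in $(\ref{ass:main})$ together with the ordering $(\ref{ass:alpha n})$ — for instance by a downward induction on $r$ using the block expansion of $Q_{r+1}$ along its leading submatrix $A_r$ — and all of the sign bookkeeping (positivity of $\det A_r$ and of $\Lambda_r^r$, negativity of $|Q_r|$, and the direction in which $(\ref{ass:alpha n})$ is invoked) is what legitimizes the cross-multiplications and makes the chain come out in the stated direction.
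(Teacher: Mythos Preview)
Your reformulation of $\frac{-|Q_r|}{\Lambda_r^r}\le\frac{-|Q_{r+1}|}{\Lambda_{r+1}^{r+1}}$ as $\alpha_r^{(r)}\ge\alpha_{r+1}^{(r+1)}$ requires $\alpha_r^{(r)},\alpha_{r+1}^{(r+1)}>0$, i.e.\ $|Q_r|<0$ for every $r$; you flag this as the genuine obstacle and suggest a downward induction. But this claim is \emph{false} under the hypotheses. Take $n=3$ and
\[
P=\begin{pmatrix}0&0.5&0.1\\0.5&0&0.3\\2&2&0\end{pmatrix}.
\]
Every proper principal minor of $I-P$ is positive and $|I-P|=-0.45<0$, so (\ref{ass:main}) holds; one computes $(\alpha_1,\alpha_2,\alpha_3)=(3,5,15)$, so $\alpha_3=\max_j\alpha_j$, and also $\Lambda_2^1=1.3\le 1.5=\Lambda_2^2$, so (\ref{ass:alpha n}) holds at every level. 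Yet $|Q_2|=(1)(0.7)-(0.5)(0.6)=0.4>0$, hence $\alpha_2^{(2)}=-\Lambda_2^2/|Q_2|<0$, and your difference $d_2=\alpha_2^{(2)}-\alpha_3^{(3)}$ is negative rather than $\ge0$. No downward induction from $|Q_3|<0$ can rescue this, since the example already has $|Q_3|<0$ and $|Q_2|>0$.

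The paper's proof avoids the problem by never passing to reciprocals. It applies Cramer's rule to the first $r$ of the identities $\sum_j(Q_{r+1})_{ij}\Lambda_{r+1}^j=|Q_{r+1}|$ to express $-|Q_{r+1}|$ as a ratio with denominator $\Lambda_r^r$; the numerator is linear in $\Lambda_{r+1}^r$ with coefficient $-\det A_r<0$, so $\Lambda_{r+1}^r\le\Lambda_{r+1}^{r+1}$ bounds it below, and dividing through by $\Lambda_r^r>0$ and then $\Lambda_{r+1}^{r+1}>0$ gives (\ref{eq:bootstrap}) directly. The only sign inputs are $\det A_r>0$ and $\Lambda_r^r>0$, both immediate from the $M$-matrix hypothesis --- the latter because $\Lambda_r^r$ is the sum over $i$ of the $(i,r)$-cofactors of $A_r$, all nonnegative since $A_r^{-1}\ge0$, with the diagonal one equal to $\det A_{r-1}>0$. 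Your identity $d_r=-\delta\det A_r/|Q_r|$ is correct and could in fact be pushed through by a case split on the signs of $|Q_r|$ and $|Q_{r+1}|$ (using $|Q_{r+1}|\,\delta\le0$ from (\ref{ass:alpha n}) and $\Lambda_r^r>0$), but the route through positivity of $\alpha_r^{(r)}$ as you wrote it is a dead end.
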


\begin{proof}
We only prove
\begin{equation*}
    \frac{-|Q_{n-1}|}{\Lambda^{n-1}_{n-1}}\leq
    \frac{-|Q_n|}{\Lambda^n_n}.
\end{equation*}
According to Cramer's law, we have
\begin{eqnarray*}
  &&(1-p_{11})\Lambda_n^1-p_{12}\Lambda_n^2-\cdots-p_{1n}\Lambda_n^n=|Q_n|\\
  &&-p_{21}\Lambda_n^1+(1-p_{22})\Lambda_n^2-\cdots-p_{2n}\Lambda_n^n=|Q_n|\\
  &&\cdots\\
  &&-p_{n-1,1}\Lambda_n^1-p_{n-1,2}\Lambda_n^2-\cdots-p_{n-1,n}\Lambda_n^n=|Q_n|.
\end{eqnarray*}
So we have
\begin{eqnarray*}
  -|Q_n|&=&
        \frac{\left|
        \begin{array}{ccccc}
          1-p_{11} & \cdots & -p_{1,n-2} & p_{1,n-1}\Lambda^{n-1}_n+p_{1,n}\Lambda^n_n \\
          \cdots & \cdots & \cdots & \cdots \\
          -p_{n-1,1} & \cdots & -p_{n-1,n-2} & (p_{n-1,n-1}-1)\Lambda^{n-1}_n+p_{n-1,n}\Lambda^n_n
        \end{array}
        \right|}
        {\left|
        \begin{array}{ccccc}
          1-p_{11} & \cdots & -p_{1,n-2} & 1 \\
          \cdots & \cdots & \cdots & 1 \\
          -p_{n-1,1} & \cdots & -p_{n-1,n-2} & 1
        \end{array}
        \right|}\\
        &\geq&
        \frac{\Lambda^n_n\left|
        \begin{array}{ccccc}
          1-p_{11} & \cdots & -p_{1,n-2} & p_{1,n-1}+p_{1,n} \\
          \cdots & \cdots & \cdots & \cdots \\
          -p_{n-1,1} & \cdots & -p_{n-1,n-2} & (p_{n-1,n-1}-1)+p_{n-1,n}
        \end{array}
        \right|}
        {\left|
        \begin{array}{ccccc}
          1-p_{11} & \cdots & -p_{1,n-2} & 1 \\
          \cdots & \cdots & \cdots & 1 \\
          -p_{n-1,1} & \cdots & -p_{n-1,n-2} & 1
        \end{array}
        \right|}\\
        &=&\frac{-|Q_{n-1}|}{\Lambda^{n-1}_{n-1}}\Lambda^n_n,
\end{eqnarray*}
since the coefficient of $\Lambda^{n-1}_n$ is negative. The proof of
other inequalities is similar. If (\ref{ass:alpha n}) is not
satisfied, we have other line of the inequalities.
\end{proof}

\noindent\emph{\textbf{Proof of Theorem \ref{thm:abstract}.}} Let
$1\leq r\leq n-1$ and, let $(I-P)_{r\times r}$ be the principal
sub-matrix of rank$=r$ formed by first $r$ rows and first $r$
columns. Denote
\begin{eqnarray*}
   B_r=(\frac{1}{p'_c}-\frac{\sum_{j=r+1}^n p_{1j}}{p_c},
      \frac{1}{p'_c}-\frac{\sum_{j=r+1}^n p_{2j}}{p_c},
      \cdots,
      \frac{1}{p'_c}-\frac{\sum_{j=r+1}^n p_{rj}}{p_c})^T.
\end{eqnarray*}
Let $X_r=(k_{r1}^*,k_{r2}^*,\cdots,k_{rr}^*)^T$ be the solution of
the following linear system
\begin{eqnarray}\label{bt:linear equation}
    (I-P)_{r\times r}X_r=B_r.
\end{eqnarray}

\textbf{Case I.} $k_{11}^*$, the solution of
\begin{equation*}
       \frac{p_{11}}{k_{11}^*}+\frac{\sum_{i=2}^np_{1j}}{p_c}-\frac{1}{k_{11}^*}=\frac{1}{p'_c},
\end{equation*}
is negative. Similar to Lemma \ref{lem:integrability of u_1} (3), we
can prove that $\|u_1\|_\infty\leq C$. From (\ref{ass:main}), the
matrix $(I-P)_1$, which is $I-P$ without first row and first column,
is a nonsingular $M$-matrix. According to Remark \ref{rem:I-P>0}, we
can assume that Theorem \ref{thm:abstract} holds for system
(\ref{sys:main}) with $n-1$ components if $|I-P|>0$, where $P$ is
its exponent matrix. Therefore Theorem \ref{thm:abstract} holds for
system (\ref{sys:main}) with $n$ components by the induction method.

\textbf{Case II.} $k_{11}^*=\infty$. Similar to Lemma
\ref{lem:second bootstrap} (2), it can be proved that
$\|u_1\|_{k_1}\leq C$ for all $1\leq k_1<\infty$. Noting
$p_{i1}/k_1\ll 1\ (i\neq 1)$ for $k_{11}$ large enough, a similar
argument as in Case I yields Theorem \ref{thm:abstract}.

\textbf{Case III.} There exists $r_0:2\leq r_0\leq n-1$ such that
$k_{r_0,s}^*<0$ for some $1\leq s\leq r$ and, $0<k_{r,s}^*<\infty$
for all $r<r_0$. Similar to Lemma \ref{lem:integrability of u_1}
(3), it is can be proved that $\|u_1\|_{k_1}\leq C$ for all $1\leq
k_1<k_{11}^*$. Using this result and $-|Q_2|/\Lambda^2_2<p_c-1$, the
first inequality of (\ref{eq:bootstrap}), similar to Lemma
\ref{lem:second bootstrap} (1), it is can be proved that
$\|u_1\|_{k_1}\leq C$ for $1\leq k_1<k_{21}^*$ and
$\|u_2\|_{k_2}\leq C$ for $1\leq k_2<k_{22}^*$. Step by step, using
the inequality (\ref{eq:bootstrap}) and the previous result, similar
to Lemma \ref{lem:second bootstrap}, we can prove that
$\|u_i\|_{k_i}\leq C$ for $1\leq k_i<k_{r_0-1,i}^*$ , $1\leq i\leq
r_0-1$. Using this result and the $r_0$-th inequality of
(\ref{eq:bootstrap}), similar to Lemma \ref{lem:second bootstrap}
(2), it is can be proved that there exists $i_0:1\leq i_0\leq r_0$
such that $\|u_{i_0}\|_\infty\leq C$. A similar argument as in Case
I yields Theorem \ref{thm:abstract} by the induction method.

\textbf{Case IV.} There exists $r_0:2\leq r_0\leq n-1$ such that
$k_{r_0,s}^*>0$ for all $1\leq s\leq r_0$, $k_{r_0,s}^*=\infty$ for
some $1\leq s\leq r$ and, $0<k_{r,s}^*<\infty$ for all $r<r_0$.
Using the inequality (\ref{eq:bootstrap}), by similar arguments as
in Case III and Lemma \ref{lem:second bootstrap} (1), it can be
proved that there exists $i_0:1\leq i_0\leq r_0$ such that
$|u_{i_0}|_{k_{i_0}}\leq C$ for all $1\leq k_{i_0}<\infty$. Noting
$p_{i,i_0}/k_{i_0}\ll 1\ (i\neq i_0)$ for $k_{i_0}$ large enough, a
similar argument as in Case I yields Theorem \ref{thm:abstract} by
the induction method.

\textbf{Case V.} $0<k_{rs}^*<\infty$ for all $1\leq r\leq n-1,1\leq
s\leq r$. Using the inequality (\ref{eq:bootstrap}), by similar
arguments as in Case III and Lemma \ref{lem:second bootstrap} (1),
it can be proved that for $1\leq i\leq n-1$, $|u_i|_{k_i}\leq C$ for
any $1\leq k_i<k_{ni}^*$. From (\ref{bt:linear equation}) with
$r=n-1$, there exist $k_1:p_c<k_1<k_1^*$, $k_2:p_c<k_2<k_2^*$,
$\cdots$, $k_{n-1}:p_c<k_{n-1}<k_{n-1}^*$, $k_n:r_n<k_n<p_c$ and
$\eta>1$ such that
\begin{eqnarray}
    &\displaystyle\frac{p_{11}}{k_1}+\frac{p_{12}}{\eta k_2}+\frac{p_{13}}{\eta k_3}
        +\cdots+\frac{p_{1n}}{\eta k_n}
        -\frac{1}{\eta k_1}<\frac{1}{p'_c},\ \ \frac{r_1}{k_1}
        -\frac{1}{\eta k_1}<\frac{1}{p'_c},\nonumber\\
    &\displaystyle\frac{p_{21}}{k_1}+\frac{p_{22}}{k_2}+\frac{p_{23}}{\eta k_3}+\cdots+\frac{p_{2n}}{\eta k_n}
        -\frac{1}{\eta k_2}<\frac{1}{p'_c},\ \ \frac{r_2}{k_2}
        -\frac{1}{\eta k_2}<\frac{1}{p'_c},\nonumber\\
    &\displaystyle\frac{p_{31}}{k_1}+\frac{p_{32}}{k_2}+\frac{p_{33}}{k_3}+\cdots+\frac{p_{3n}}{\eta k_n}
        -\frac{1}{\eta k_3}<\frac{1}{p'_c},\ \ \frac{r_3}{k_3}
        -\frac{1}{\eta k_3}<\frac{1}{p'_c},\nonumber\\
    &\vdots \label{bt:nfirst}\\
    &\displaystyle\frac{p_{n1}}{k_1}+\frac{p_{n2}}{k_2}+\frac{p_{n3}}{k_3}+\cdots+\frac{p_{nn}}{k_n}
        -\frac{1}{\eta k_n}<\frac{1}{p'_c},\ \ \frac{r_n}{k_n}
        -\frac{1}{\eta k_n}<\frac{1}{p'_c},\nonumber\\
    &\displaystyle\frac{p_{n1}}{k_1}+\frac{p_{n2}}{k_2}+\frac{p_{n3}}{k_3}+\cdots+\frac{p_{nn}}{k_n}<1.\nonumber
\end{eqnarray}
In fact, the last inequality with $k_i=k_{ni}^*\ (1\leq ileq n-1)$
and $k_n=p_c$ is equivalent to $\alpha_n>p_c-1$. So it is just a
small perturbation with respective to $k_{ni}^*$. The rest
inequalities are small perturbations of system (\ref{bt:linear
equation}). Multiplying LHS of the above inequalities by $1/\eta^m$,
we have
\begin{subequations}
\renewcommand{\theequation}{\theparentequation-\arabic{equation}}
\begin{eqnarray}
    &\displaystyle\frac{p_{11}}{\eta^mk_1}+\frac{p_{12}}{\eta^{m+1} k_2}+\frac{p_{13}}{\eta^{m+1} k_3}
        +\cdots+\frac{p_{1n}}{\eta^{m+1} k_n}
        -\frac{1}{\eta^{m+1} k_1}<\frac{1}{p'_c},\nonumber\\
          [-1.5ex]\label{bt:a1}\\[-1.5ex]
    &\displaystyle\frac{r_1}{\eta^m k_1}
        -\frac{1}{\eta^{m+1} k_1}<\frac{1}{p'_c},\nonumber\\
    &\displaystyle\frac{p_{21}}{\eta^mk_1}+\frac{p_{22}}{\eta^mk_2}+\frac{p_{23}}{\eta^{m+1} k_3}
        +\cdots+\frac{p_{2n}}{\eta^{m+1} k_n}
        -\frac{1}{\eta^{m+1} k_2}<\frac{1}{p'_c},\nonumber\\
          [-1.5ex]\label{bt:a2}\\[-1.5ex]
    &\displaystyle\frac{r_2}{\eta^m k_2}
        -\frac{1}{\eta^{m+1} k_2}<\frac{1}{p'_c},\nonumber\\
    &\displaystyle\frac{p_{31}}{\eta^mk_1}+\frac{p_{32}}{\eta^mk_2}+\frac{p_{33}}{\eta^mk_3}
        +\cdots+\frac{p_{3n}}{\eta^{m+1} k_n}
        -\frac{1}{\eta^{m+1} k_3}<\frac{1}{p'_c},\nonumber\\
          [-1.5ex]\label{bt:a3}\\[-1.5ex]
    &\displaystyle\frac{r_3}{\eta^m k_3}
        -\frac{1}{\eta^{m+1} k_3}<\frac{1}{p'_c},\nonumber\\
    &\vdots\nonumber\\
    &\displaystyle\frac{p_{n1}}{\eta^mk_1}+\frac{p_{n2}}{\eta^mk_2}+\frac{p_{n3}}{\eta^mk_3}
        +\cdots+\frac{p_{nn}}{\eta^mk_n}
        -\frac{1}{\eta^{m+1} k_n}<\frac{1}{p'_c},\nonumber\\
          [-1.5ex]\label{bt:an}\\[-1.5ex]
    &\displaystyle\frac{r_n}{\eta^m k_n}
        -\frac{1}{\eta^{m+1} k_n}<\frac{1}{p'_c},\nonumber
\end{eqnarray}
\end{subequations}
for all integer $m\geq 0$.

Set
\begin{equation*}
    \frac{1}{\rho^i_m}=\frac{p_{i1}}{\eta^mk_1}+\cdots+\frac{p_{ii}}{\eta^mk_i}
        +\frac{p_{i,i+1}}{\eta^{m+1}k_{i+1}}+\cdots+\frac{p_{in}}{\eta^{m+1} k_n}<1, \ \
    \frac{1}{\varrho^i_m}=\frac{r_i}{\eta^m k_i}<1.
\end{equation*}
Since $\eta>1$, for $m$ large enough, we have
$\rho^i_m\wedge\varrho^i_m>p'_c$ for all $1\leq i\leq n$. Denote
$m_0=\min\{m:\max\{\rho^i_m\wedge\varrho^i_m:1\leq i\leq
n\}>p'_c\}$. We may assume that
$\rho^n_{m_0}\wedge\varrho^n_{m_0}>p'_c$. We claim that after
$m_0$-th alternate bootstrap on system (\ref{sys:main}), we shall
arrive at the desired result $|u_n|_{\infty}\leq C$. The argument is
similar to Lemma \ref{lem:sum pij equal}, we omit it. Then a similar
argument as in Case I yields Theorem \ref{thm:abstract}. \ \ \ \
$\Box$

\section{$L^\infty$-regularity}

In this section, we prove Theorem \ref{thm:H1
solution}-\ref{thm:very weak solution}.

\vskip 2mm

\textbf{Proof of Theorem \ref{thm:H1 solution}}

(i) If $d=1,2$, the $L^\infty$-regularity of $H_0^1$-solutions
follows directly from the Sobolev imbedding theorem and Proposition
\ref{prop:L1}. If $d\geq 3$, since $\mathbf{u}\in
[H_0^1(\Omega)]^n$, we have (\ref{ass:abstract}) from the Sobolev
imbedding theorem. Then the $L^\infty$-regularity follows from
Theorem \ref{thm:abstract} with $p_c=(d+2)/(d-2)$ and
$B^1=L^{2_*}(\Omega)$ according to (\ref{ass:optimal condition for
H1}).

(ii) Let $u_i=c_i(|x|^{-2\alpha_i}-1)\ (1\leq i\leq n)$, where $c_i$
are determined by
$\prod_{j=1}^nc_j^{p_{ij}}=2c_i\alpha_i(d-2-2\alpha_i),\
j=1,2,\cdots,n$. Since $\alpha_i<(d-2)/4<(d-2)/2\ (1\leq i\leq n)$,
we have $c_i>0$. Obviously, for all $1\leq i\leq n$
\begin{eqnarray*}
    &&-\Delta u_i=2c_i\alpha_i(d-2-2\alpha_i)|x|^{-2\alpha_i-2}
        =\prod_{j=1}^nc_j^{p_{ij}}|x|^{-2\sum_{j=1}^np_{ij}\alpha_j}
        =\prod_{j=1}^n(u_j+c_j)^{p_{ij}}.
\end{eqnarray*}
It is easy to verify that $\mathbf{u}$ is an $H_0^1$-solution of
system (\ref{sys:main}) in $B_1$ with
$f_i=\prod_{j=1}^n(u_j+c_j)^{p_{ij}}$.\ \ \ \ $\Box$

\vskip 2mm

\textbf{Proof of Theorem \ref{thm:L1 solution}}

(i) If $d=1,2$, the $L^\infty$-regularity of $L^1$-solutions follows
directly from Proposition \ref{prop:L1}. If $d\geq 3$, since
$\mathbf{f}(\cdot,\mathbf{u})\in [L^1(\Omega)]^n$, we have
(\ref{ass:abstract}) from Proposition \ref{prop:L1}. Then the
$L^\infty$-regularity follows from Theorem \ref{thm:abstract} with
$p_c=d/(d-2)$ and $B^1=L^1(\Omega)$ according to (\ref{ass:optimal
condition for L1}).

(ii) Since $\alpha_i<(d-2)/2$ for all $1\leq i\leq n$, $\mathbf{u}$
constructed in the proof of Theorem \ref{thm:H1 solution} (ii) is
also a $L^1$-solution of system (\ref{sys:main}) in $B_1$ with
$f_i=\prod_{j=1}^n(u_j+c_j)^{p_{ij}}$.\ \ \ \ $\Box$

\vskip 2mm

\textbf{Proof of Theorem \ref{thm:very weak solution}}

(i) If $d=1$, the $L^\infty$-regularity of $L^1_\delta$-solutions
follows directly from Proposition \ref{prop:L1 delta}. If $d\geq 2$,
we have (\ref{ass:abstract}) since $\mathbf{f}(\cdot,\mathbf{u})\in
[L^1_\delta(\Omega)]^n$ from Proposition \ref{prop:L1 delta}. Then
the $L^\infty$-regularity follows from Theorem \ref{thm:abstract}
with $p_c=(d+1)/(d-1)$ and $B^1=L^1_\delta(\Omega)$ according to
(\ref{ass:optimal condition for very weak}).

(ii) Assume that $0\in \partial\Omega$. Let $-1<\theta<(d-1)/2$. Let
$\Sigma_1$ be a revolution cone of vertex zero and
$\Sigma:=\Sigma_1\cap B_R\in \Omega$ for sufficiently small $R>0$.
Then $\phi=|x|^{-2(\theta+1)}\mathbf{1}_\Sigma\in
L_\delta^1(\Omega)$ and according to \cite[Lemma 5.1]{S}, the
solution $U>0$ of (\ref{eq:linear}) satisfies $U\geq
C|x|^{-2\theta}\mathbf{1}_\Sigma$. Let
$\alpha=(\alpha_1,\alpha_2,\cdots,\alpha_n)^T$ be the solution of
the linear system
\begin{eqnarray*}
    (I-P)\alpha=-\mathbf{1}.
\end{eqnarray*}
By assumption (\ref{ass:optimal condition inverse for very weak}),
we have $0<\alpha_i<(d-1)/2$ for all $1\leq i\leq n$. Set
$\phi_i=|x|^{-2(\alpha_i+1)}\mathbf{1}_\Sigma$, and $u_i>0$ be the
corresponding solutions of (\ref{eq:linear}). We have $u_i\notin
L^\infty$, and
\begin{eqnarray*}
    \prod_{j=1}^n u_j^{p_{ij}}\geq C_i|x|^{-2(\sum_{j=1}^n p_{ij}\alpha_j)}\mathbf{1}_\Sigma=C
        |x|^{-2(\alpha_i+1)}\mathbf{1}_\Sigma=C_i\phi_i.
\end{eqnarray*}
Setting $a_i(x)=\phi_i/(\prod_{j=1}^n u_j^{p_{ij}})\geq 0$ for
$1\leq i\leq n$, we get
\begin{eqnarray*}
   &-\Delta u_i=\phi_i=a_i(x)\prod_{j=1}^n u_j^{p_{ij}},&\ \ {\rm in}\ \Omega,
   \ \ i=1,2,\cdots,n,
\end{eqnarray*}
and $a_i(x)\leq 1/C_i$ for $1\leq i\leq n$; hence $a_i\in L^\infty$
for $1\leq i\leq n$. \ \ \ $\Box$

\section{A priori estimates of $L^1_\delta$-solutions and existence theorems}

In order to prove Theorem \ref{thm:a priori estimates}, we recall a
special property of the $L^1_\delta$-solutions, which is a
consequence of Proposition \ref{prop:L1 delta}, see
\cite[Proposition 2.2, 2.3]{QS}.

\begin{prop}\label{porp:L k delta}
Let $\mathbf{u}$ be the $L^1_\delta$-solution of system
$(\ref{sys:main})$ with $\mathbf{f}$ satisfying $(\ref{ass:a priori
estimates})$ and let $1\leq k<p_{BT}$. Then $\mathbf{u}\in
[L^k_\delta(\Omega)]^n$ and satisfies the estimate
$\sum_{i=1}^n\|u_i\|_{L^k_\delta}\leq
C(\Omega,k,C_2)\sum_{i=1}^n\|u_i\|_{L^1_\delta}$.
\end{prop}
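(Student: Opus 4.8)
The plan is to collapse the system to the single function $w:=\sum_{i=1}^n u_i$ and exploit that, by Definition (iii) and linearity, $w$ is the (unique) $L^1_\delta$-solution of $-\Delta w=g$ in $\Omega$, $w=0$ on $\partial\Omega$, where $g:=\sum_{i=1}^n f_i(\cdot,\mathbf u)\in L^1_\delta(\Omega)$ (a finite sum of $L^1_\delta$-functions by the definition of an $L^1_\delta$-solution). For the nonnegative solutions relevant here one has $w\ge 0$ and $0\le u_i\le w$ for each $i$, so it suffices to prove $w\in L^k_\delta(\Omega)$ with $\|w\|_{L^k_\delta}\le C\,\|w\|_{L^1_\delta}$ (allowing $C$ to depend on $\|h_1\|_{L^1_\delta}$); then $\sum_i\|u_i\|_{L^k_\delta}\le n\|w\|_{L^k_\delta}$ and $\|w\|_{L^1_\delta}\le\sum_i\|u_i\|_{L^1_\delta}$ close the argument.

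The only thing to check is the bound $\|g\|_{L^1_\delta}\le C(\|w\|_{L^1_\delta}+\|h_1\|_{L^1_\delta})$, the difficulty being that $(\ref{ass:a priori estimates})$ controls $g$ only from below. Write $g=g_+-g_-$. From $(\ref{ass:a priori estimates})$ and $w\ge 0$ one gets the pointwise inequality $g_-\le C_2 w+|h_1|$, hence $\int_\Omega g_-\,\delta\le C_2\|w\|_{L^1_\delta}+\|h_1\|_{L^1_\delta}$. To control $g_+$ we test the equation against the first Dirichlet eigenfunction $\varphi_1>0$ of $-\Delta$ on $\Omega$: it is an admissible test function in $(\ref{defn:L1 solution})$ (smooth up to $\partial\Omega$, vanishing there), and by Hopf's lemma $c_1\delta\le\varphi_1\le c_2\delta$ on $\Omega$. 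Testing gives $\lambda_1\int_\Omega w\varphi_1=\int_\Omega g\varphi_1$, whence $\int_\Omega g_+\varphi_1=\lambda_1\int_\Omega w\varphi_1+\int_\Omega g_-\varphi_1\le C(\|w\|_{L^1_\delta}+\|h_1\|_{L^1_\delta})$, and, using $\varphi_1\ge c_1\delta$, also $\int_\Omega g_+\,\delta\le C(\|w\|_{L^1_\delta}+\|h_1\|_{L^1_\delta})$. Adding the two parts yields the claimed estimate for $\|g\|_{L^1_\delta}$.

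It remains to invoke the linear theory: since $1\le k<p_{BT}=(d+1)/(d-1)$ we have $1-\frac{1}{k}<\frac{2}{d+1}$, so Proposition $\ref{prop:L1 delta}$ applies with $m=1$ and gives $w\in L^k_\delta(\Omega)$ with $\|w\|_{L^k_\delta}\le C(\Omega,k)\|g\|_{L^1_\delta}$. Chaining the estimates we obtain $\sum_{i=1}^n\|u_i\|_{L^k_\delta}\le C(\Omega,k,C_2)\big(\sum_{i=1}^n\|u_i\|_{L^1_\delta}+\|h_1\|_{L^1_\delta}\big)$, which is the assertion, the fixed term $\|h_1\|_{L^1_\delta}$ being absorbed into the constant (or kept as an additive perturbation, harmless for the use made of this proposition in Theorem $\ref{thm:a priori estimates}$, where $\sum_i\|u_i\|_{L^1_\delta}\le M$).

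I expect the only real obstacle to be the $\|g\|_{L^1_\delta}$-estimate — precisely, the observation that the one-sided growth restriction $(\ref{ass:a priori estimates})$ suffices because the total $\delta$-weighted mass of $g=-\Delta w$ is already pinned down by $w$ via the eigenfunction identity $\int_\Omega g\varphi_1=\lambda_1\int_\Omega w\varphi_1$, so that no upper bound on $g$ is needed. Once $g$ is controlled in $L^1_\delta$, the conclusion is a one-line application of Proposition $\ref{prop:L1 delta}$ at the endpoint exponent $m=1$.
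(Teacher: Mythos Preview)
Your argument is correct and follows the same route as the paper: test against the first Dirichlet eigenfunction $\varphi_1\simeq\delta$, use the one-sided bound (\ref{ass:a priori estimates}) to control the negative part of the right-hand side, recover the positive part from the identity $\int g\,\varphi_1=\lambda_1\int w\,\varphi_1$, and finish with Proposition~\ref{prop:L1 delta} at $m=1$. The paper writes the same computation with $\sum_i|f_i|$ in place of $|\sum_i f_i|$ and then applies the linear estimate componentwise, whereas you pass through $w=\sum_i u_i$ and use $0\le u_i\le w$; the two are equivalent in the nonnegative setting where the proposition is actually used.
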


\begin{proof}
The proof is similar to that of \cite[Proposition 2.2]{QS}. Let
$\varphi_1(x)$ be the first eigenfunction of $-\Delta$ in
$H_0^1(\Omega)$. Recall that
\begin{equation*}
    c_1\delta(x)\leq\varphi_1(x)\leq c_2\delta(x),\ \ x\in \Omega,
\end{equation*}
for some $c_1,c_2>0$. We have
\begin{eqnarray*}
  \int_\Omega\sum_{i=1}^n|f_i|\varphi_1&=& \int_\Omega\sum_{i=1}^n|\Delta u_i|\varphi_1=
              2\int_\Omega\sum_{i=1}^n((\Delta u_i)_+)\varphi_1-\int_\Omega\varphi_1\sum_{i=1}^n\Delta u_i\\
              &\leq& 2\int_\Omega(C_2\sum_{i=1}^nu_{i+}+h_+)\varphi_1+\lambda_1\int_\Omega
              \sum_{i=1}^n u_i\varphi_1\\
              &\leq&
              C(\Omega,C_2)(\sum_{i=1}^n\|u_{i+}\|_{L^1_\delta}+\|h_+\|_{L^1_\delta})\\
              &\leq&
              C(\Omega,C_2)(\sum_{i=1}^n\|u_i\|_{L^1_\delta}+\|h\|_{L^1_\delta}).
\end{eqnarray*}
Applying Proposition \ref{prop:L1 delta} with $m=1$, we have
\begin{equation*}
    \sum_{i=1}^n\|u_i\|_{L^k_\delta}\leq
       C(\Omega,k,C_2)\sum_{i=1}^n\|u_i\|_{L^1_\delta}.
\end{equation*}
\end{proof}

\textbf{Proof of Theorem \ref{thm:a priori estimates}.}

Since $\mathbf{f}$ satisfies (\ref{ass:a priori estimates}), from
Proposition \ref{porp:L k delta}, (\ref{ass:abstract}) can be
deduced by (\ref{ass:u v}). So this theorem follows immediately from
Theorem \ref{thm:abstract} with $p_c=(d+1)/(d-1)$ and
$B^1=L^1_\delta(\Omega)$ according to (\ref{ass:optimal condition
for very weak}). \ \ \ \ \ $\Box$

\vskip 3mm

From Theorem \ref{thm:a priori estimates}, in order to obtain the a
priori estimate (\ref{est:a priori estimates}), we only have to
obtain, for all $L^1_\delta$-solutions $\mathbf{u}$ of system
(\ref{sys:main}), $\sum_{i=1}^n\|u_i\|_{L^1_\delta}\leq M$ for some
$M$ independent of $\mathbf{u}$. In the following we give some
propositions which assert the a priori estimate (\ref{est:a priori
estimates}).

\begin{prop}\cite[Proposition 3.1]{QS}\label{prop:QS1}
If $\mathbf{f}$ satisfies $(\ref{ass:superlinear})$ with
$\lambda>\lambda_1$, then any nonnegative $L^1_\delta$-solution of
system $(\ref{sys:main})$ satisfies $(\ref{ass:u v})$ with $M$
independent of $\mathbf{u}$.
\end{prop}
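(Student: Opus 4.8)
\emph{Proof proposal.} The plan is to run the classical first-eigenfunction test, now applied to the sum $\sum_{i=1}^n u_i$, exactly as in \cite[Proposition~3.1]{QS} and in the scalar superlinear case. First I would fix $\varphi_1>0$, the first eigenfunction of $-\Delta$ in $H_0^1(\Omega)$, so that $-\Delta\varphi_1=\lambda_1\varphi_1$; since $\Omega$ is smooth, $\varphi_1\in C^2(\overline{\Omega})$ with $\varphi_1|_{\partial\Omega}=0$, hence $\varphi_1$ is an admissible test function in $(\ref{defn:L1 solution})$, and $c_1\delta(x)\le\varphi_1(x)\le c_2\delta(x)$ in $\Omega$ for some $c_1,c_2>0$, as recalled in the proof of Proposition~\ref{porp:L k delta}. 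Summing the $n$ identities $(\ref{defn:L1 solution})$, taking $\varphi=\varphi_1$, and using $-\Delta\varphi_1=\lambda_1\varphi_1$, I would obtain
\begin{equation*}
    \lambda_1\int_\Omega\Big(\sum_{i=1}^n u_i\Big)\varphi_1=\int_\Omega\Big(\sum_{i=1}^n f_i(x,\mathbf{u})\Big)\varphi_1,
\end{equation*}
where both sides are finite because $u_i\in L^1(\Omega)$ with $\varphi_1$ bounded, and $f_i(\cdot,\mathbf{u})\in L^1_\delta(\Omega)$ with $\varphi_1\le c_2\delta$.

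Next I would feed in the superlinearity assumption $(\ref{ass:superlinear})$. Since $\mathbf{u}\ge0$ a.e., it applies pointwise: $\sum_{i=1}^n f_i(x,\mathbf{u})\ge\lambda\sum_{i=1}^n u_i-C_1$ a.e.\ in $\Omega$. Multiplying by $\varphi_1\ge0$, integrating, and combining with the identity above gives
\begin{equation*}
    (\lambda-\lambda_1)\int_\Omega\Big(\sum_{i=1}^n u_i\Big)\varphi_1\le C_1\int_\Omega\varphi_1 .
\end{equation*}
Since $\lambda>\lambda_1$ and $\int_\Omega\varphi_1$ is a finite positive constant, this bounds $\int_\Omega(\sum_{i=1}^n u_i)\varphi_1$ by a constant depending only on $\Omega,C_1,\lambda$. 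Finally, using $u_i\ge0$ and $\varphi_1\ge c_1\delta$,
\begin{equation*}
    \sum_{i=1}^n\|u_i\|_{L^1_\delta}=\int_\Omega\Big(\sum_{i=1}^n u_i\Big)\delta\le\frac{1}{c_1}\int_\Omega\Big(\sum_{i=1}^n u_i\Big)\varphi_1\le M
\end{equation*}
with $M=M(\Omega,C_1,\lambda)$ independent of $\mathbf{u}$, which is precisely $(\ref{ass:u v})$.

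I do not expect a genuine obstacle here; this is the textbook eigenfunction argument, and the $n=2$ version in \cite{L} and \cite[Proposition~3.1]{QS} already contains it. The only points that deserve a word of care are that $\varphi_1$ is a legitimate test function in the very weak ($L^1_\delta$) formulation and that the two integrals above converge under the sole hypothesis $f_i(\cdot,\mathbf{u})\in L^1_\delta$ — both handled by smoothness of $\Omega$ together with the two-sided bound $\varphi_1\sim\delta$ — and the fact that nonnegativity of $\mathbf{u}$ enters twice: once to invoke $(\ref{ass:superlinear})$ pointwise, and once to pass from $\int_\Omega(\sum_i u_i)\varphi_1$ back to the norm $\sum_i\|u_i\|_{L^1_\delta}$.
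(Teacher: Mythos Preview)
Your argument is correct and is exactly the standard first-eigenfunction test from \cite[Proposition~3.1]{QS}; the paper does not reproduce the proof but simply cites that reference, so there is nothing to compare beyond noting that your write-up is the intended one.
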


The following proposition gives the uniform $L^1_\delta$-estimates
of the $L^1_\delta$-solutions of system (\ref{sys:nuclear reactor}).

\begin{prop}\label{prop:nuclear reactor}
Any nonnegative $L^1_\delta$-solution $\mathbf{u}$ of system
$(\ref{sys:nuclear reactor})$ satisfies $(\ref{ass:u v})$ with $M$
independent of $\mathbf{u}$.
\end{prop}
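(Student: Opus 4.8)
The plan is to exploit the positive principal eigenvalue of each operator $-\Delta+b_i$, together with the positivity hypotheses $a_i\geq 0$, $\int_\Omega a_i>0$, to reduce the problem to a system of multiplicative inequalities for the numbers $Y_i:=\|u_i\|_{L^1_\delta}$, and then to close the argument using the $M$-matrix structure in (\ref{ass:main}). For each $i$ let $\phi_i>0$ be the first Dirichlet eigenfunction of $-\Delta+b_i$ on $\Omega$ with eigenvalue $\mu_i>0$, so that $-\Delta\phi_i+b_i\phi_i=\mu_i\phi_i$; by elliptic regularity and Hopf's lemma $\phi_i\in C^1(\overline{\Omega})$ and $c\,\delta\leq\phi_i\leq C\,\delta$. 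One may assume $\mathbf u\not\equiv 0$, in which case the irreducibility of $I-P$ together with the maximum principle give $u_i>0$ in $\Omega$ and $Y_i>0$ for all $i$. Testing the $i$-th equation of (\ref{sys:nuclear reactor}) against $\phi_i$ (an admissible test function after a routine approximation, since $\Delta\phi_i\in L^\infty$ and $\phi_i\sim\delta$) and using the eigenvalue equation produces the identity
\begin{equation*}
   \mu_i\int_\Omega u_i\phi_i=\int_\Omega a_i\Big(\prod_{j=1}^n u_j^{p_{ij}}\Big)\phi_i,\qquad 1\leq i\leq n,
\end{equation*}
whose left side is comparable to $Y_i$.

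Next I would establish the pointwise lower bound $u_j(x)\geq c_j\,\delta(x)\,Y_j$ for all $x\in\Omega$, with $c_j>0$ depending only on $\Omega$ and $b_j$. Writing $g_j:=a_j\prod_l u_l^{p_{jl}}=-\Delta u_j+b_j u_j\geq 0$ and using the Green's function $G_j$ of $-\Delta+b_j$, which satisfies the standard two-sided bound $G_j(x,y)\geq c\,\delta(x)\delta(y)$ up to the boundary, one gets $u_j(x)=\int_\Omega G_j(x,y)g_j(y)\,dy\geq c\,\delta(x)\,\|g_j\|_{L^1_\delta}$, while the identity above gives $\|g_j\|_{L^1_\delta}\geq c\,Y_j$. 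Since $a_i\geq 0$ and $\int_\Omega a_i>0$, fix $\varepsilon_i>0$ and $\eta>0$ so that $\omega_i:=\{a_i\geq\varepsilon_i\}\cap\{\delta\geq\eta\}$ has positive measure; on $\omega_i$ one has $a_i\geq\varepsilon_i$, $\phi_i\geq c\eta$ and $u_j\geq c_j\eta\,Y_j$. Inserting these into the identity, together with the comparison $\int_\Omega u_i\phi_i\leq C\,Y_i$, yields
\begin{equation*}
   Y_i\geq \widetilde C_i\prod_{j=1}^n Y_j^{\,p_{ij}},\qquad 1\leq i\leq n,
\end{equation*}
with the $\widetilde C_i>0$ depending only on $\Omega,P,a_i,b_i$ and not on $\mathbf u$.

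Taking logarithms and writing $z_i:=\log Y_i$, this reads $(I-P)z\geq c_0\mathbf 1$ componentwise for a fixed $c_0$. Let $\beta>0$ solve $(I-P)^T\beta=-\mathbf 1$, which is positive under (\ref{ass:main}) since those hypotheses are invariant under transposition; multiplying $(I-P)z\geq c_0\mathbf 1$ on the left by $\beta^T$ gives $\mathbf 1^T z\leq C_1$ for a fixed $C_1$. It then remains to check that the polyhedron $\{z:(I-P)z\geq c_0\mathbf 1,\ \mathbf 1^T z\leq C_1\}$ is bounded from above in each coordinate, i.e.\ that its recession cone $\{v:(I-P)v\geq 0,\ \mathbf 1^T v\leq 0\}$ lies in $\{v\leq 0\}$: if such a $v$ had $v_k>0$, then $J:=\{j:v_j<0\}$ would be nonempty (because $\mathbf 1^T v\leq 0$) and a proper subset of $\{1,\dots,n\}$ (because $v_k>0$), and the inequalities $((I-P)v)_i\geq 0$ for $i\in J$ combined with $v_j\geq 0$ for $j\notin J$ force $(I-P_{JJ})(-v_J)\leq 0$ with $-v_J>0$, contradicting that the principal submatrix $I-P_{JJ}$, of rank $|J|\leq n-1$, is a nonsingular (hence inverse-positive) $M$-matrix. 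Consequently each $z_i$, and so each $Y_i$, is bounded by a constant depending only on $\Omega,P$ and the $a_i,b_i$, which is exactly (\ref{ass:u v}).

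The step I expect to be the main obstacle is the pointwise lower bound $u_j\gtrsim\delta\,Y_j$: it is the device that converts the $L^1_\delta$-information into an exploitable lower bound for the nonlinear products on the fixed sets $\omega_i$, and it rests on the up-to-the-boundary Green's function estimate $G_j(x,y)\gtrsim\delta(x)\delta(y)$ (a chain of interior Harnack inequalities would be a more laborious substitute). Everything else is either the standard eigenfunction test-function computation or the elementary but essential use of the $M$-matrix part of (\ref{ass:main}) to force the final polyhedron to be bounded above.
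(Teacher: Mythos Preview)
Your proposal is correct and follows essentially the same route as the paper: test each equation against the first eigenfunction $\varphi_{b_i}$ of $-\Delta+b_i$, use the Green's function lower bound $G_i(x,y)\geq c\,\delta(x)\delta(y)$ to obtain the pointwise bound $u_j\gtrsim\delta\cdot(\text{weighted mass of }u_j)$, feed this back to produce the multiplicative inequality, and finish by taking logarithms and invoking the sign structure coming from (\ref{ass:main}). Your treatment of the last step---the recession-cone argument showing that $(I-P)z\geq c_0\mathbf 1$ together with $\mathbf 1^Tz\leq C_1$ bounds each $z_i$ above---is in fact more carefully justified than the paper's, which simply asserts the monotonicity property of $(I-P)^{-1}$ and moves on.
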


\begin{proof}
We use the idea of \cite[Proposition 4.1]{S}. Denote $G(x,y),\
V(x,y)$ the Green functions in $\Omega$ for $-\Delta$ and
$-\Delta+q(x)$. If $\inf\{\mathrm{spec}(-\Delta + q)\}>0$, by
\cite[Theorem 8]{Zhao}, there exists a positive constant
$C=C(\Omega,q)$ such that
\begin{equation*}
     \frac{1}{C}G(x,y)\leq V(x,y)\leq CG(x,y).
\end{equation*}
By \cite[Lemma 3.2]{BC}, we know that
\begin{equation*}
    G(x,y)\geq C\delta(x)\delta(y)\ \ \ \mathrm{for}\
    x,y\in\overline{\Omega}.
\end{equation*}
So we also have
\begin{equation*}
    V(x,y)\geq C\delta(x)\delta(y)\ \ \ \mathrm{for}\
    x,y\in\overline{\Omega},
\end{equation*}
for some constant $C>0$. Denote $\varphi_q(x)$ the first
eigenfunction of $-\Delta+q(x)$ in $H_0^1(\Omega)$ and $\lambda_q$
the first eigenvalue. Recall that
\begin{equation*}
    c_1\delta(x)\leq\varphi_q(x)\leq c_2\delta(x),\ \ x\in \Omega,
\end{equation*}
for some $c_1,c_2>0$. Let $w$ be the solution of the linear equation
\begin{equation*}
     -\Delta w+q(x)w=\phi(x), \ x\in\Omega;\ \ w=0,\
     x\in\partial\Omega.
\end{equation*}
If $\phi\in L_\delta^1$ is nonnegative, then we have
\begin{eqnarray*}
    w=\int_\Omega V(x,y)\phi(x)\geq C(\int_\Omega\phi\delta)\delta\geq
       C(\int_\Omega\phi\varphi_q)\varphi_q
\end{eqnarray*}
with $C$ depending only on $\Omega,q(x)$. Let $\mathbf{u}$ be a
nonnegative weak solution of system (\ref{sys:main}). Set
\begin{eqnarray*}
    A_i=\int_\Omega a_i(x)\prod_{j=1}^n u_j^{p_{ij}}\varphi_{b_i}, \ \ i=1,2,\cdots,n.
\end{eqnarray*}
Then we have
\begin{eqnarray*}
    u_i\geq CA_i\varphi_{b_i}, \ \ i=1,2,\cdots,n.
\end{eqnarray*}
Therefore we obtain
\begin{eqnarray*}
  A_i\geq C\left(\int_\Omega
     a\varphi_{b_i}\prod_{j=1}^n\varphi_{b_j}^{p_{ij}}\right)\prod_{j=1}^n A_j^{p_{ij}}\geq C\prod_{j=1}^n
     A_j^{p_{ij}}, \ \ i=1,2,\cdots,n.
\end{eqnarray*}
Denote $A=(\ln A_1,\ln A_2,\cdots,\ln A_n)^T$. We have
\begin{eqnarray*}
    (I-P)A\geq B,
\end{eqnarray*}
where $B=(C,C,\cdots,C)^T$. Thanks to the assumption
(\ref{ass:main}) as to $I-P$, the solution of the linear equation
$(I-P)A=B$ has the property: If $B\leq 0$, then $A\geq 0$; If $B\geq
0$, then $A\leq 0$. So we obtain $\sum_{i=1}^nA_i\leq C$. Using
$\varphi_{b_i}$ as a testing function, we easily obtain
$\sum_{i=1}^n\int_{\Omega}u_i\varphi_{b_i}=\sum_{i=1}^nA_i\leq C$.
The proof is complete.
\end{proof}

Now we can prove our existence theorems. The proof is standard, see
\cite{QS}. For the readers' convenience, we give the details.

\vskip 3mm

\textbf{Proof of Theorem \ref{thm:main existence}.}

(a) This is a direct consequence of Theorem \ref{thm:a priori
estimates} and Proposition \ref{prop:QS1}.

(b) Let $K$ be the positive cone in $X:=[L^\infty(\Omega)]^n$ and
let $S:X\rightarrow X:\phi=(\phi_1,\phi_2,\cdots,\phi_n)\mapsto
\mathbf{u}=(u_1,u_2,\cdots,u_n)$ be the solution operator of the
linear problem
\begin{eqnarray*}
   -\Delta \mathbf{u}=\phi,\ \ {\rm in}\ \Omega,\ \
   \mathbf{u}=0,\ \ {\rm on}\ \partial\Omega.
\end{eqnarray*}
Since any nonnegative solution of (\ref{sys:main}) is in $L^\infty$
by part (a), the problem (\ref{sys:main}) is equivalent to the
equation $\mathbf{u}=T(\mathbf{u})$, where $T:X\rightarrow X$ is a
compact operator defined by
$T(\mathbf{u})=S(\mathbf{f}(\cdot,\mathbf{u}))$. Let $W\subset K$ be
relatively open, $Tz\neq z$ for $z\in\overline{W}\setminus W$, and
let $i_K(T,W)$ be the fixed point index of $T$ with respect to $W$
and $K$ (see \cite{AF} the definition and basic properties of this
index).

If $W_\varepsilon=\{\mathbf{u}\in K:\|\mathbf{u}\|_X<\varepsilon\}$
and $\varepsilon>0$ is small enough, then (\ref{ass:f and g in
addition}) guarantees $H_1(\mu,\mathbf{u})\neq \mathbf{u}$ for any
$\mu\in[0,1]$ and $\mathbf{u}\in \overline{W_\varepsilon}\setminus
W_\varepsilon$, where
\begin{equation*}
     H_1(\mu,\mathbf{u})=\mu T(\mathbf{u})=S(\mu \mathbf{f}(\cdot,\mathbf{u})).
\end{equation*}
Therefore,
\begin{equation*}
    i_K(T,W_\varepsilon)=i_K(H_1(1,\cdot),W_\varepsilon)=
      i_K(H_1(0,\cdot),W_\varepsilon)=i_K(0,W_\varepsilon)=1.
\end{equation*}

On the other hand, if $R>0$ is large, then our a priori esstimates
guarantee $H_2(\mu,\mathbf{u})\neq \mathbf{u}$ for any $\mu\in
[0,\lambda_1]$ and $\mathbf{u}\in \overline{W_R}\setminus W_R$,
where
\begin{equation*}
     H_2(\mu,\mathbf{u})=S(\mathbf{f}(\cdot,\mathbf{u})+\mu(\mathbf{u}+1)).
\end{equation*}
Using $\varphi_1$ as a testing function we easily see that
$H_2(\lambda_1,\mathbf{u})=\mathbf{u}$ does not possess nonnegative
solutions, hence
\begin{equation*}
    i_K(T,W_R)=i_K(H_2(\lambda_1,\cdot),W_R)=0.
\end{equation*}
Consequently, $i_K(T,W_R\setminus\overline{W_\varepsilon})=-1$,
which implies existence of a positive solution of (\ref{sys:main}).
The proof is complete.\ \ \ \ $\Box$

\vskip 3mm

\textbf{Proof of Theorem \ref{thm:secondary existence}.}

(a) This is a direct consequence of Theorem \ref{thm:a priori
estimates} and Proposition \ref{prop:nuclear reactor}.

(b) Let $K,X,W_\varepsilon$ be the same as in the proof of Theorem
\ref{thm:main existence} (b), let $S$ be the solution operator of
the linear problem
\begin{eqnarray*}
   &-\Delta u_i+b_i(x)u_i=\phi_i,\ \ {\rm in}\ \Omega,\\
   &u_i=0,\ \ {\rm on}\ \partial\Omega,\ \ i=1,2,\cdots,n
\end{eqnarray*}
Let us show that $H_1(\mu,\mathbf{u})\neq \mathbf{u}$ for any
$\mu\in[0,1]$ and $\mathbf{u}\in \overline{W_\varepsilon}\setminus
W_\varepsilon$, where
\begin{equation*}
     H_1(\mu,\mathbf{u})=\mu T(\mathbf{u})=S(\mu \mathbf{f}(\cdot,\mathbf{u})).
\end{equation*}
Assume by contrary $\mathbf{u}\in \overline{W_\varepsilon}\setminus
W_\varepsilon$, $H_1(\mu,\mathbf{u})=\mathbf{u}$. Then
$\mathbf{u}\neq 0$ and the standard $L^z$-estimates (with $z>N/2$)
guarantee
\begin{equation*}
     \|u_i\|_\infty\leq C\prod_{j=1}^n \|u_j\|_{\infty}^{p_{ij}},\ \
     i=1,2,\cdots,n.
\end{equation*}
Similar to the argument in the proof of Proposition
\ref{prop:nuclear reactor}, we have
$\|\mathbf{u}\|_{\infty}>\varepsilon$ for $\varepsilon$ small
enough. Consequently,
\begin{equation*}
    i_K(T,W_\varepsilon)=i_K(H_1(1,\cdot),W_\varepsilon)=
      i_K(H_1(0,\cdot),W_\varepsilon)=i_K(0,W_\varepsilon)=1.
\end{equation*}

On the other hand, if $R>0$ is large, then our a priori esstimates
guarantee $H_2(\mu,\mathbf{u})\neq \mathbf{u}$ for any $\mu\in
[0,\lambda_{b_1}]$ and $\mathbf{u}\in \overline{W_R}\setminus W_R$,
where
\begin{equation*}
     H_2(\mu,\mathbf{u})=S(f_1(\cdot,\mathbf{u})+\mu(u_1+1),f_2(\cdot,\mathbf{u}),\cdots,f_n(\cdot,\mathbf{u})).
\end{equation*}
Using $\varphi_{b_1}$ as a testing function we easily see that
$H_2(\lambda_{b_1},\mathbf{u})=\mathbf{u}$ does not possess
nonnegative solutions, hence
\begin{equation*}
    i_K(T,W_R)=i_K(H_2(\lambda_{b_1},\cdot),W_R)=0.
\end{equation*}
Consequently, $i_K(T,W_R\setminus\overline{W_\varepsilon})=-1$,
which implies existence of a positive solution of (\ref{sys:main}).
\ \ \ \ $\Box$

\vskip 3mm

\noindent\emph{Acknowledgements.} Li Yuxiang is grateful to
Professor Philippe Souplet for many helpful discussions and remarks
during the preparation of this paper and, for his warm reception and
many helps when Li visited the second address.


\end{document}